\newcounter{mycounter} 
\newcommand\showmycounter{\stepcounter{mycounter}\themycounter}
\newcommand{\const}{C_{\showmycounter}}
\renewcommand{\P}{\mathbb{P}}
\numberwithin{equation}{section} 
\DeclareMathOperator*{\essinf}{ess\,inf} 
\DeclarePairedDelimiterX{\Norm}[1]{\|}{\|}{\Normargs{#1}}
\NewDocumentCommand{\Normargs}{>{\SplitArgument{1}{;}}m}
{\Normargsaux#1}
\NewDocumentCommand{\Normargsaux}{mm}
{\IfNoValueTF{#2}{#1} {#1\nonscript\:\delimsize\vert\allowbreak\nonscript\:\mathopen{}#2}}%
\def\norm{\Norm*}%
\DeclarePairedDelimiterX{\set}[1]{\{}{\}}{\setargs{#1}}
\NewDocumentCommand{\setargs}{>{\SplitArgument{1}{;}}m}
{\setargsaux#1}
\NewDocumentCommand{\setargsaux}{mm}
{\IfNoValueTF{#2}{#1} {#1\nonscript\:\delimsize\vert\allowbreak\nonscript\:\mathopen{}#2}}%
\def\Set{\set*}%
\providecommand{\keywords}[1]{\textbf{Keywords. } #1}
\providecommand{\MSC}[1]{\textbf{MSC (2020). } #1}
\theoremstyle{plain}
\newtheorem{theorem}{Theorem}[section] 
\newtheorem{lemma}[theorem]{Lemma}
\newtheorem{proposition}[theorem]{Proposition}
\newtheorem{corollary}[theorem]{Corollary}
\theoremstyle{definition}
\newtheorem{definition}[theorem]{Definition} 
\newtheorem{example}[theorem]{Example}
\newtheorem{remark}[theorem]{Remark}
\newtheorem{assumption}{Assumption}
\newcommand{\R}{\mathbb{R}}
\renewcommand{\d}{\,\mathrm{d}}
\date{}
\begin{document}

\title{On a stochastic phase-field model of cell motility with singular diffusion}

\author[1]{Amjad Saef}
\author[2]{Wilhelm Stannat}
\affil[1]{\small{
  Institute of Mathematics, Technische Universität Berlin,\linebreak
  Straße des 17.\ Juni 136, 10623 Berlin, Germany,\linebreak
  e-mail: \href{mailto:saef@math.tu-berlin.de}{saef@math.tu-berlin.de},\linebreak
  ORCID: \href{https://orcid.org/0009-0006-9503-3367}{0009-0006-9503-3367}}}
\affil[2]{\small{
  Institute of Mathematics, Technische Universität Berlin,\linebreak
  Straße des 17.\ Juni 136, 10623 Berlin, Germany,\linebreak
  e-mail: \href{mailto:stannat@math.tu-berlin.de}{stannat@math.tu-berlin.de},\linebreak
  ORCID: \href{https://orcid.org/0000-0002-0514-3874}{0000-0002-0514-3874}}}

\maketitle
\vspace{-1.7cm}
\begin{abstract}
We study existence of solutions in the variational sense for a class of stochastic phase-field models describing moving boundary problems. The models consist of stochastic reaction-diffusion equations with singular diffusion forced by a phase-field. We investigate both the case of an independently evolving phase-field and of coupled phase-field evolution driven by a viscous Hamilton-Jacobi equation. Such systems are used in the modelling of single-cell chemotaxis \cite{ABS}, where the contour of the cell shape corresponds to a level set of the phase-field. The technical challenge lies in the singularities at zero level sets of the phase-field. For large classes of initial data, we establish global existence of probabilistically weak solutions in $L^2$-spaces with weights which compensate for the singularities.
\end{abstract}
\keywords{phase field model, stochastic partial differential equations,  singular diffusion, weighted spaces}

\MSC{35K55, 60H15, 80A22, 92C17, 92D25}

\section{Introduction}
In recent decades, the coupling of phase-field evolution with transport and reaction processes has been used to approximate solutions of free boundary problems originating from diverse subfields of physics and biology, including solidification \cite{SteinbachPhaseFieldMaterials}, tumour growth \cite{TumorPhaseField}, and cellular migration \cite{ShaoRappel}. In particular, such methods were used in modelling vesicle deformations \cite{VesiclePhaseField1, VesiclePhaseField2} and later extended to cell motility modelling \cite{ShaoRappel, ABS}. For a non-exhaustive overview of history, applications, and discussions of interpretations and implementations of phase-field methods (also called diffuse interface methods), we refer to \cite{PhaseFieldNumerics,PlappDiffuseInterface}. Their strength stems from the computational efficiency and theoretical simplicity relative to the complexity of corresponding exact formulations of moving boundary problems. We remark that mathematically rigorous investigations of how such diffuse-interface descriptions relate to sharp-interface models can be found for instance in the works of \citet{CaginalChenSharpInterface}, and in \citet{WeberInterface} for a stochastic analogue.

Phase-field models on some domain $\mathcal D$ refrain from modelling moving boundaries as sharp time-dependent hypersurfaces and instead represent the moving boundary as a diffuse transition interface.
Namely, one introduces a space-time order parameter $\phi(t, \bm x)$, $t \geq 0$, $\bm x \in \mathcal D$, which changes rapidly but continuously between two equilibria, e.g. $\phi \equiv 0$ and $\phi \equiv 1$. In cell motility models, one may then interpret the region $$\Set{\bm x \in \mathcal D:\phi(t,\bm x)\geq 0.5}$$ as the cell interior at time $t$, while the transition region represents the cell membrane or a diffuse approximation thereof. The advantage is that transport and reaction processes can be coupled to the moving geometry without explicitly parametrising the boundary.

In this article, we investigate the existence with respect to appropriate notions of solutions of stochastic phase-field models of the form \begin{equation} 
\label{IntroABSModel}
\left\{ \begin{aligned}
&\partial_t \phi(t,x) = \gamma \Delta \phi(t,x) + g(\phi, c)(t,x) + \Psi(\phi, c)(t,x)|\nabla \phi(t,x)|, \\
&\mathrm d c(t,x) = \left( D\Delta c(t,x) + D\frac{\nabla \phi(t,x)  \nabla  c(t,x)}{\phi(t,x)}  +  f(\phi,  c)(t,x) \right) \mathrm d t  +  b(\phi,c)(t,x)\mathrm d  W(t,x),
\end{aligned} \right.
\end{equation}
where  $t \geq 0$ denotes time, the domain $\mathcal D$ is the $d$-dimensional torus $\mathbb T^d$ for $d \geq 1$, and $\gamma, D > 0$ are diffusion coefficients. Moreover, $f$ and $g$ denote nonlocal reaction terms, the dispersion coefficient $b$ is a Nemytskii operator, $\Psi$ is a possibly nonlocal nonlinearity  and $W$ is coloured-in-space, white-in-time Wiener noise.

This type of model was introduced in \citet{ABS}. It was designed as a model of chemotaxis of the slime mould \textit{Dictyostelium discoideum}, which is a widely used model organism for the study of amoeboid movement and chemotaxis \cite{DictyModelOrganism}. The study of \textit{D. discoideum} is relevant beyond the specifics of this organism, since the signalling and cytoskeletal mechanisms involved in polarisation and locomotion of \textit{D. discoideum} are similar to mechanisms observed in motile eukaryotic cells in general \cite{DictyModelOrganism}.

A prominent mathematical feature of \eqref{IntroABSModel} is the singular diffusion term \begin{equation} \label{SingDiff}
   L_\phi c \coloneqq \frac{1}{\phi}\nabla \cdot \left(\phi \nabla c\right) =\Delta c +  \frac1\phi\nabla \phi \nabla c
\end{equation} which imposes an impermeable layer at the cell boundary, while allowing free diffusion inside the cell. In regions where $\phi_t$ is close to $1$, this is approximately free diffusion with diffusion coefficient $D$. Near the transition layer, however, the additional drift term $D\frac{\nabla\phi_t}{\phi_t}\nabla c_t$ compensates for outward diffusion across the diffuse membrane. More specifically, it penalises diffusion into the outwards normal direction of the transition front of the phase-field $\phi$. One obtains this diffusion term from related phase-field models of cell migration \cite{FlemmingCorticalWaves, MoldenhawerSpontaneous,CaoRappelELife, CaoRappelNr2} of the form 
\begin{equation} \label{eq:PhaseFieldUnsimplified}
\partial_t (\phi c) = \nabla \cdot (\phi \nabla c) + \phi f(\phi,c) 
\end{equation} by assuming that membrane dynamics happen on a slower time scale than those of the diffusing biochemical components, so that $\d(\phi c) \approx \phi \d c$.  Models of a similar structure as the more general equation \eqref{eq:PhaseFieldUnsimplified}
have been studied mathematically in the context of fluid dynamics for example in \cite{DanchinCriticalViscous} or \cite{VasseurNSPhaseFieldType}, see also the references therein. The authors remarks that they are not aware of any works studying stochastic extensions of the models considered in those works. 

A related class of models can be found in evolutionary ecology, where coupled reaction-diffusion systems are used to model population adaptation in heterogeneous or changing environments. The model of Pease, Lande, and Bull \cite{PeaseModel} and its variants in \cite{KirkpatrickModel,GarciaRamosModel,KanarekWebb} describe the joint dynamics of population density and mean trait distribution. These equations share structural similarities with the system considered here, in particular through logarithmic diffusion terms coupling density and trait evolution. The stochastic extension discussed in \cite{BartonSPDE} even incorporates fully noisy dynamics with singular dispersion coefficients.

Diffusions of the type \eqref{SingDiff} were first studied as so-called generalised Schrödinger operators \cite{Albeverio, RoecknerGeneralSchrödinger}. These studies were motivated by the connection of the stochastic processes generating such singular diffusions to quantum mechanics and quantum field theory \cite{NelsonDynamical}; a detailed overview of the different mathematical approaches can be found for example in \citet{UniquenessNelsonDiff}. For $\phi_t \equiv \phi$ constant, corresponding diffusion processes and (uniqueness) of semigroups generated by maximal extensions of $L_\phi$ have been studied intensively in weighted spaces $L^p(\phi)$ on both finite- and infinite-dimensional domains \cite{Wielens,Liskevich,StannatFixedTime, EberleUniquenessNonSym}. Existence and uniqueness of solutions of the associated evolution equation has been extended to the time-dependent case when the evolution of $\phi_t$ is governed by the complex Schrödinger equation \cite{CarlenDiff, StannatTimedependent}. 

At the foundation of these approaches lies the insight that the operator $L_\phi$ is symmetric only with respect to the weighted measure $\phi \d \bm x$. At a formal level, the $L^2(\phi\d \bm x)$-inner product $\langle \cdot, \cdot \rangle_{\phi}$ satisfies 
$$
\langle L_\phi c, v \rangle_\phi
=
-\langle \nabla c,\nabla v\rangle_\phi.
$$
Thus, the singular factor disappears once the equation is tested in a weighted form. This is in accordance with the phase-field interpretation, where the quantity $\phi c$ represents the amount of biochemical material distributed over the diffuse cell interior. A weighted solution concept is therefore plausible also in this approach, as it accounts for the physically meaningful quantity $\phi c$,  while the values of $c_t$ on $\phi_t \equiv 0$ remain irrelevant to the evolution of the processes.

The objective of the present work is to provide a rigorous analytical framework for time-dependent singular diffusions applied in biophysical modelling. As remarked in \cite{RaoulKirkpatrick}, this type of equation seems to have received limited attention from the mathematical community. Therein, the author studies existence and asymptotic properties of solutions of a Kirkpatrick-Barton model. Another notable work in this direction is \cite{MillerWaveKirkpatrick}.  Through the Cole-Hopf transform $z \coloneqq \log \frac{1}{\phi}$, the model is related to PDEs of the form 
\begin{equation} \label{eq:ColeHopf}
\begin{aligned}
\partial_t z_t &= \Delta z_t - |\nabla z_t|^2\\
\partial_t c_t &=  \Delta c_t - \nabla z_t\nabla  c_t 
\end{aligned}
\end{equation}
with $z \geq 0$, for $\gamma, \mu > 0$. One observes that solutions of this system are formally scaling invariant under the usual parabolic scaling. This indicates that the nonlinearities present in \eqref{eq:ColeHopf} are critical and standard parabolic estimates usually fail. Characterising well-posedness for critical nonlinearities and appropriate solution spaces is an active area of research; we note in particular the recent progress in \cite{AgrestiVeraar}. For an overview of such approaches, we refer to the recent survey articles \cite{WilkeSurvey} and \cite{VeraarSurvey}, for the deterministic and the stochastic case, respectively. Similar to our work, such approaches often work within time-weighted function spaces adapted to the criticality of the nonlinearity. Our contribution to this research is the introduction of weights which are inhomogeneous in space, where the growth near the initial time is governed by the evolution of the heat flow with suitable initial condition. This framework allows for rather general initial conditions. However, in contrast to the cited works, the question of uniqueness for unbounded initial conditions $z_0 \coloneqq \log \frac{1}{\phi_0}$ remains open. 

Our work is split into two parts. In the first part, we show existence of weighted martingale solutions of \begin{equation} 
\label{eq:WeakSolIntroDecoupled}
\d c_t = \left(\Delta c_t + \frac{1}{\phi_t}\nabla \phi_t \nabla c_t+f(\phi_t,c_t)\right)\d t + b(\phi_t,c_t) \d W_t
\end{equation} 
when $(\phi_t)_{t \in [0,T]}$ is a given process independent of $c$, and therefore call it the uncoupled 
case. In the second part we prove existence of martingale solutions of the fully coupled system 
\eqref{IntroABSModel}. 

We note that we restrict our analysis to uniformly bounded initial data and solutions by truncating the stochastic forcing outside a compact set that is left invariant under the 
nonlinear dynamics. This greatly simplifies technical considerations and respects 
biological plausibility. Thus, the singularity in \eqref{SingDiff} constitutes the main analytical obstacle in 
establishing existence of relevant notions of solutions. 

Then,
to obtain a priori estimates and apply the compactness method, a natural condition on $\frac{1}{\phi_t}\nabla\phi_t$ is
\begin{equation} \label{eq:L2condition}
\mathbb{E} \left[ \int_0^T \int \frac{|\nabla \phi_t |^2}{\phi_t^2} \d {\bm x}  
\d t \right] < \infty. 
\end{equation}
In the coupled case this turns out to be equivalent to $\log \phi_0 \in L^1$ and
$\phi_0 > 0$, $\d {\bm x}$-almost surely. 

To relax this condition, we introduce notions of weighted variational solutions in 
Definition \ref{MartSolStationary} (uncoupled case) 
and in Definition \ref{MartSolDynamic} (fully coupled case). More specifically, 
our main results, Theorems \ref{StationaryExistence} and \ref{WeakSystemExistence}, 
prove existence of martingale solutions of \eqref{IntroABSModel} w.r.t. weighted test 
functions 
$v_t = \rho^\beta_t u \in H^{1,2}(\mathbb T^n) \cap L^\infty(\mathbb T^n)$ for 
$u\in H^{1,2} (\mathbb{T}^n)\cap L^\infty(\mathbb T^n)$ and $\beta = 1$ in the uncoupled (resp. $\beta = 2$ in the coupled case). Here the weight $\rho_t$ satisfies 
\begin{equation} \label{eq:weightcondition}
\mathbb E\left[\int_0^T \int \rho^2_t \frac{|\nabla \phi_t|^2}{\phi_t^2} \d {\bm x} \d t\right] < \infty.
\end{equation}
To specify our requirements on the initial conditions, let  
$(\Omega, \mathcal F, \mathbb P)$ denote an underlying probability space. In the uncoupled 
case we need to assume that $0 \leq \phi_t \leq K_\phi$, $L_c \leq c_0 \leq K_c$ for 
all $t \in [0,T]$ almost surely, for some constants $K_\phi >0$, $L_c, K_c \in \mathbb R$  depending on the reaction terms $f$ and $g$,  
and $\nabla \phi_t \in L^2(\Omega;L^2([0,T];L^2(\mathbb T^n)))$, where 
$L^p$-spaces on the spatial domain are 
understood with respect to the Lebesgue measure. Additionally, we require that $\phi$ is absolutely 
continuous in the distributional sense with $\partial_t \phi \in L^1(\Omega;L^1([0,T];L^1(\mathbb T^n)))$. 

In the fully coupled case, we need to assume the same boundedness assumptions on the initial conditions 
$\phi_0, c_0$ and additionally, that $\nabla\phi_0 \in L^\infty(\Omega;L^2(\mathbb T^n))$. 
If in addition, $\log \phi_0 \in L^\infty(\Omega;L^1(\mathbb T^n))$, we then obtain martingale 
solutions in the classical sense. We note that by our previous comments, this condition is nearly optimal (cf. Corollary \ref{L1condition}). Even in the deterministic case, these existence 
results seem to be new.

Though methodologically similar, the two cases have different scopes. The uncoupled case in principle 
allows for functions $\phi_t$ which vanish on a set of potentially positive measure, which necessitates 
the introduction of weak derivatives in a suitable chosen weighted sense. Moreover, since there is less information at hand about the properties of $\phi$, we need to exploit the symmetrising effect of $\phi$ on the singular diffusion to control singularities at zero-level sets. This excludes the use of weights in the generality of the second part of this chapter. 

In contrast to this, 
the diffusion operator in the fully coupled equation \eqref{IntroABSModel} exhibits a singularity only 
at $t=0$: by a parabolic maximum principle, $\phi_t$ is strictly positive for $t > 0$, given any 
nonnegative, nonzero initial data, see also Prop. \ref{Subsolution}. Due to these regularising effects, we can show that for arbitrary $0 \leq \phi_0 \leq K_\phi$,
$$
\alpha \int_0^T \norm{\frac{\nabla \phi_t}{\phi^{1-\alpha}_t}}^2_{L^2} \d t < \infty
$$ 
holds uniformly in $\alpha \in (0,\frac12)$. This infinitesimal weakening of condition 
\eqref{eq:L2condition} demonstrates that for all $\alpha \in (0,\frac12)$, $\phi^\alpha$ 
is an admissible weight. 
With the compactness method applied to infinite sequences of approximating processes, 
we can derive existence of a limiting process $(\phi, c)$ such that $c$ is a weighted 
martingale solution for weights of the type $\phi^{\alpha_n}$, with $\alpha_n \to 0$. 
This constitutes a type of density result, which implies that the limiting process $c$ can be tested against any admissible 
weight. 

A commonality of both sections is the use of compactness arguments tailored to our setting, to handle the low temporal and spatial regularity of the involved terms. Existence and continuity of limit processes is then derived via additional a posteriori steps.

This work is structured as follows: In Section \ref{subsec:WeakSolSetting}, we introduce relevant notations, 
definitions and the mathematical framework of the equations we study. The core of our work begins with Section 
\ref{subsec:WeakSolStationarySect}, which defines and demonstrates existence of weighted martingale solutions of 
\eqref{eq:WeakSolIntroDecoupled}. To obtain such solutions, we first regularise the equation and solve the tamed 
equation using well-established methods. Then, we reweight accordingly to successively relax truncations using a-priori 
inequalities and compact embeddings which yield tightness of laws. In the limit, we obtain the desired type of 
solution (cf. Theorem \ref{StationaryExistence}). In Section \ref{subsec:WeakSolCoupledSection}, we tackle the 
fully coupled system \eqref{IntroABSModel}. This requires additional regularity results 
for $\phi$. Our main result, Theorem \ref{subsec:WeakSolCoupledSection}, then proves 
existence of weighted martingale solutions to \eqref{IntroABSModel} (cf. Definition \ref{MartSolDynamic}).
The last section, Section \ref{ApplicationSec}, discusses applications to equations used in the modeling 
of biophysical processes.

\section{Mathematical Setting, Notations and Main Assumptions} 
\label{subsec:WeakSolSetting}
In this section, seek to prove the existence of solutions in the so-called \textit{variational} framework. We shortly sketch the usual setting of this approach to stochastic partial differential equations. Fix a finite time $T>0$, a reflexive, separable Banach space $V$ and separable Hilbert spaces $U, H$. Note that all vector spaces in this manuscript are assumed to be real. We say that the spaces $(V,H,V^\ast)$ form a \textit{Gelfand triple} if $V$ is densely and continuously embedded into the separable Hilbert space $H$. This in turn is embedded into $V^\ast$, the dual space of $V$, by the Riesz isomorphism and the adjoint map of the continuous embedding $V \hookrightarrow H$, to obtain $V \hookrightarrow H \cong H^\ast \hookrightarrow V^\ast.$ The choice of Gelfand triple will determine the choice of test functions of variational solutions of the stochastic partial differential equation \begin{equation} \label{VarSol}
\d u_t = A(t,u_t)\d t + B(t,u_t) \d W_t.
\end{equation} Let $_{V^\ast}\langle \cdot, \cdot\rangle_V$ denote the corresponding dual pairing on $V^\ast \times V$. 

Let a filtered probability space \( (\Omega, \mathcal{F}, (\mathcal{F}_t)_{t \geq 0}, \mathbb{P}) \) satisfying the usual conditions and some (possibly nonlinear) progressively measurable operators $A \colon \Omega \times [0,T] \times V \rightarrow V^\ast$ and $B \colon \Omega \times [0,T] \times V \rightarrow HS(U,H)$ be given. Here $HS(U,H)$ is the space of Hilbert--Schmidt operators equipped with the Borel $\sigma$-Algebra induced by the corresponding norm.

\begin{definition}[Strong variational solution]
We say that there exists a \emph{probabilistically strong variational solution} of equation \eqref{VarSol} in the Gelfand triple \( V \hookrightarrow H \hookrightarrow V^\ast \) with initial distribution $\mu_{u_0}$ if for  any $\mathcal F_t$-adapted cylindrical Wiener process \( W_t \) on $U$, there exists an \( H \)-valued, \( \mathcal{F}_t \)-progressively measurable process \( u: [0,T] \times \Omega \to H \) with $u_0 \sim \mu_{u_0}$ such that \( \mathbb{P} \)-almost surely, \[ u \in L^2([0,T];V) \cap L^\infty([0,T]; H) \cap C([0,T];V^\ast)\] and the equation
\[
\langle u_t, v \rangle_H = \langle u_0, v \rangle_H + \int_0^t {_{V^\ast}}\langle A(s,u_s), v \rangle_{V} \, ds +  \left \langle \int_0^t B(s,u_s) \d W_s, v \right \rangle_H,
\]
is well-defined and holds for all \( t \in [0,T] \) and every test function \( v \in V \).
\end{definition}

\begin{definition}[Martingale solution]
\label{MarSolClassical}
We say that there exists a solution to the martingale problem associated with equation \eqref{VarSol} 
in the Gelfand triple \( V \hookrightarrow H \hookrightarrow V^\ast \) with initial distribution 
$\mu_{u_0}$ if there exists a filtered probability space \( (\Omega', \mathcal{F}',  
(\mathcal{F}'_t)_{t \geq 0}, \mathbb{P}') \) satisfying the usual conditions, random operators 
$A' \overset{d}{=} A$, $B' \overset{d}{=} B$, an $\mathcal F'$-adapted, cylindrical Wiener 
process \( W_t \) on $U$ and an \( H \)-valued, \( \mathcal{F}'_t \)-progressively measurable process 
\( u: [0,T] \times \Omega' \to H \) with $u_0 \sim \mu_{u_0}$ such that \( \mathbb{P}' \)-almost surely, 
\[ 
u \in L^2(0,T;V) \cap L^\infty([0,T]; H) \cap C([0,T];V^\ast)
\] 
and the equation
\[
\langle u_t,v \rangle_H = \langle u_0, v \rangle_H + \int_0^t {_{V^\ast}}\langle A'(s,u_s), v \rangle_{V} \, ds +  \left \langle \int_0^t B'(s,u_s) \d W_s, v \right \rangle_H,
\]
is well-defined and holds for all \( t \in [0,T] \) and every test function \(v \in V \).
\end{definition}

\begin{remark}
We note here that one can in many cases generalise the notion of a variational solution, both probabilistically weak and strong, to dual pairings $V \hookrightarrow H \cong H^\ast \hookrightarrow E$ for some space $E$ of test functions, instead of the more restrictive Gelfand triples. In this thesis, we will silently identify those two notions of solutions, provided the involved identities continue to be well-defined for the processes we consider.
\end{remark}

Throughout this section, we will use the letter $C_i,$ $i \in \mathbb N$, to denote a generic constant. All boundary problems are defined on the parabolic cylinder $[0,T] \times \mathbb T^d$ for some $T > 0$ and $d \geq 1$, where $\mathbb T^d$ denotes the flat torus. Let 
$\lambda$ be the normalised Lebesgue measure on the Borel subsets of $\mathbb T^d$. 

By $L^p(\mathbb T^d)$ and $H^{s,2}(\mathbb T^d)$, we denote the Lebesgue and Sobolev spaces on $\mathbb T^d$ for $p \geq 1$ and $s \geq 0$. Due to regularity of the domain, $H^{s,2}(\mathbb T^d)$ can be equivalently defined as either Sobolev-Slobodeckij or Bessel potential spaces. As we will deal with systems of equations, we will frequently encounter vectors of Lebesgue- or Sobolev functions. By abuse of notation, we will denote finite direct sums of Banach spaces $$\bigoplus_{i=1}^m L^p(\mathbb T^d),\qquad  \bigoplus_{i=1}^m H^{s,2}(\mathbb T^d)$$ again by $L^p(\mathbb T^d)$ resp. $H^{s,2}(\mathbb T^d)$ whenever it is clear from the context. Similarly, we will usually not highlight functions with values in such direct sums. However, we will use bold face symbols to accentuate elements of such direct sums, e.g. $\bm u \in {L}^2(\mathbb T^d)$, which should normally explain from the context the dimension of the involved identites.
Generally, we will omit reference to the spatial domain and simply write ${L}^2$ or ${H}^{s,p}$. We further adopt the notation $${H}^{1,2}_b \coloneqq {H}^{1,2} \cap {L}^\infty$$ to denote the Banach algebra of essentially bounded functions with square integrable weak derivative. 

Unless specified otherwise, the inner product $\langle \cdot, \cdot \rangle$ will always denote the inner product on (a direct sum of copies of) $L^2$. Even more generally, by $\langle \cdot, \cdot \rangle$ will denote a dual pairing of the form ${H}^{1,2} \hookrightarrow {L}^2 \cong ({L}^2)^\ast \hookrightarrow E$, where $E$ is some space of test functions. Usually, the involved spaces will become clear from the context, so we will suppress reference to them as well. 

Since we work with systems of equations, we will use the Kronecker product in the particular case 
$$
\otimes \colon \mathbb R^{1 \times d} \times \mathbb R^{k \times 1}  
\rightarrow \mathbb R^{k \times d} 
$$ 
and the operation 
$$
\odot \colon \mathbb R^{m} \times \mathbb R^{m \times n} \rightarrow \mathbb R^{m \times n}, ~ (\bm u, \bm A) \mapsto \mathrm{diag}(u_1,\dots, u_m)\cdot \bm A
$$ 
to denote the leftwise product with the diagonal matrix with entries given by $\bm u = (u_1,\dots,u_m)$. 
We further define $\nabla f \in \mathbb R^{d \times 1}$ for $f \colon \R^d \rightarrow \R$, while for $\bm g \colon \R^d \rightarrow \R^m$, we define $$\nabla \bm g \coloneqq \bm J_{\bm g} \in \R^{m \times d}.$$ With this notation at hand, we can simply write $$\nabla \cdot(f  \nabla \bm g) = f \Delta \bm g + \nabla \bm g \nabla f,$$ if we define the divergence rowwise and Laplacian componentwise, i.e. $$\Delta \bm g \coloneqq (\Delta g^1,\dots, \Delta g^m)^T\colon \mathbb T^d \rightarrow \mathbb R^m, \nabla \cdot (f \nabla \bm g) \coloneqq (\nabla \cdot (f\nabla  g^1),\dots, \nabla \cdot (f\nabla g^m))^T\colon \mathbb T^d \rightarrow \mathbb R^m.$$

We seek to establish the existence of solutions to the stochastic reaction-diffusion system  
\begin{equation} 
\label{StationaryEquation}
\d \bm c_t = \left(\frac{1}{\phi_t} \nabla \cdot(\phi_t \bm D\nabla \bm c_t) +  f(\phi_t, \bm c_t) \right) \d t +  b(\phi_t,\bm c_t)\d {\bm W}^Q_t
\end{equation} for $\bm D$ a deterministic diagonal matrix with strictly positive entries, and Wiener noise with trace class covariance $Q$. In Section \ref{subsec:WeakSolCoupledSection}, this system is moreover coupled to the parabolic equation $$\partial_t \phi_t = \gamma \Delta \phi_t + g(\phi_t,\bm c_t) +\Psi(\phi_t, \bm c_t, \nabla \phi_t).$$

\begin{assumption} \label{ass:WeakSolDispersion}We assume that the operator $ b = (b_{ij})_{1 \leq i,j\leq m}$ is a matrix of multiplication operators with entries $b_{ij}\colon \mathbb R \times \mathbb R^m \rightarrow \mathbb R$. We additionally assume that the functions $b_{ij}$ are locally Lipschitz.
\end{assumption}

\begin{assumption}\label{ass:WeakSolNemytskiiAssumption}
The nonlinearities $g,  f$ should generally correspond to Nemytskii-type operators with dependence on nonlocal properties of inputs. Due to the conditions we impose on solutions and initial conditions, it suffices to specify their behaviour on $L^\infty$. Thus, we only assume that for each $R > 0$, there exists a constant $L_R$ such that \begin{enumerate}[(i)]
    \item $g \colon L^\infty \oplus {L}^\infty \rightarrow L^\infty$ admits the bound 
    $$\begin{aligned}
    &|(g(\phi_1,\bm c_1)-g(\phi_2,\bm c_2))(\bm x)| \\&\leq L_R\left(|\phi_1(\bm x)-\phi_2(\bm x)|+ |\phi_1(\bm x)| \cdot \norm{\phi_1-\phi_2}_{L^2}+ |\phi_2(\bm x)|\left(|\bm c_1(\bm x)-\bm c_2(\bm x)|+\norm{\bm c_1-\bm c_2}_{{L}^2} \right)\right)
    \end{aligned}$$
    \item $ f \colon L^\infty \oplus {L}^\infty \rightarrow {L}^\infty$ admits the bound $$|( f(\phi_1,\bm c_1)- f(\phi_2,\bm c_2))(\bm x)|\leq L_R\left(|\phi_1(\bm x)-\phi_2(\bm x)|+|\bm c_1(\bm x)-\bm c_2(\bm x)| + \norm{\phi_1-\phi_2}_{L^2}+ \norm{\bm c_1-\bm c_2}_{{L}^2}\right)$$ 
\end{enumerate}
$\!\d {\bm x}$-almost surely whenever $\norm{\phi_1}_{L^\infty}, \norm{\phi_2}_{L^\infty}, \norm{\bm c_1}_{{L}^\infty},\norm{\bm c_2}_{{L}^\infty} \leq R$.
\end{assumption}
\begin{assumption} \label{ass:WeakSolInvariance}
Let real numbers $K_\phi$ and $$L_1, \dots L_m, K_1, \dots K_m$$ with $L_i < K_i$ be given and introduce the hypercube $\mathcal K = \prod_{i=1}^m[L_i,K_i]$ and the spaces $$\mathcal X_{\phi} = \left\{\phi \in L^\infty(\mathbb T^d) :  \phi(\bm x) \in [0,K_\phi],~ \d {\bm x}\text{-a.s.} \right\}, ~\mathcal X_{\bm c} = \left\{\bm c \in {L}^\infty(\mathbb T^d) : \bm c(\bm x) \in \mathcal K,~\d {\bm x}\text{-a.s.} \right\}.$$ We assume that  whenever $\phi \in  \mathcal{X}_\phi$ and $\bm c \in \mathcal X_{\bm c}$, then $$f_i(\phi, \bm c) \mathds{1}_{\Set{c_i \equiv L_i}}  \geq 0 \text{ and } f_i(\phi, \bm c) \mathds{1}_{\Set{c_i \equiv K_i}} \leq 0$$ and $$g(\phi, \bm c) \mathds{1}_{\Set{\phi \equiv 0}}= 0 \text{ and } g(\phi, \bm c) \mathds{1}_{\Set{\phi \equiv K_\phi}} \leq 0.$$ 
\end{assumption}
\begin{remark}
For simplicity, we will always choose $K_\phi = 1$ in this chapter.
\end{remark}
\begin{remark}
These conditions on $f$ and $g$ are modelled after nonlocal terms appearing e.g. in \ref{PhaseFieldABS}.
\end{remark}
\begin{remark}
In combination with the Lipschitz property from Assumption \ref{ass:WeakSolNemytskiiAssumption}, Assumption \ref{ass:WeakSolInvariance} implies that $\limsup_{\phi \to 0} \frac{|g(\phi,\bm c)|}{|\phi|} < \infty$, where the limit $\phi \to 0$ is taken in $L^\infty$. This property is crucial in our proof of Lemma \ref{arbitraryphibound}, and thereby crucial to the solution theory developed in this chapter.
\end{remark}

\begin{assumption} \label{ass:WeakSolPsiCoeff}
The nonlinearity $\Psi$ is of the form $$\Psi(\phi,\bm c,\nabla \phi) = \sum_{i=1}^\ell \Psi_i(\phi,\bm c) \cdot \varphi_i(\nabla \phi),$$ for some finite collection of Lipschitz functions 
$\varphi_i \colon \mathbb R^d \rightarrow \mathbb R$ with $\varphi_i(0) = 0$ and nonlinearities $\Psi_i \colon L^\infty \oplus {L}^\infty \rightarrow L^\infty$ that satisfy the Lipschitz property
$$|\left(\Psi_i(\phi_1, \bm c_1) - \Psi_i(\phi_2, \bm c_2)\right)(\bm x)| \leq L_R\left(|\phi_1(\bm x)- \phi_2(\bm x)|+|\bm c_1(\bm x)-\bm c_2(\bm x)| + \norm{\phi_1-\phi_2}_{L^2}+ \norm{\bm c_1-\bm c_2}_{{L}^2}\right)$$ whenever $\norm{\phi_1}_{L^\infty}, \norm{\phi_2}_{L^\infty}, \norm{\bm c_1}_{{L}^\infty},\norm{\bm c_2}_{{L}^\infty} \leq R$.
\end{assumption}

\begin{assumption} \label{ass:WeakSolNoiseTrunc}
As it simplifies the analysis considerably and respects the constraints of biological sensibility, we will consider strictly bounded solutions in this manuscript and modify the noise term to ensure that solution behave as such. Namely, we assume that $ b(y,\cdot) \equiv 0$ outside of $\mathcal K$, i.e. $$ b(y, \bm c) = 0, ~y \in [0,K_\phi],~\bm c \notin \mathcal K.$$
\end{assumption}
\begin{remark} \label{rem:WeakSolDispCoeffBounded}
Observe that $ b$ restricted to ${[0,K_\phi] \times \mathbb R^m}$ is globally bounded and Lipschitz by local Lipschitzness and the compact support property of Assumption \ref{ass:WeakSolNoiseTrunc}.
\end{remark}
The noise truncation allows us to crucially leverage boundedness assumptions on the initial conditions. Namely, this allows us to first truncate the nonlinearities present in the given equation 
and subsequently remove these truncations by showing that the resulting solutions remain below the 
threshold of truncation. 

\section{Solution theory for uncoupled phase-fields} 
\label{subsec:WeakSolStationarySect}
Throughout this section, we let a filtered probability space $(\Omega, \mathcal F, (\mathcal F_t)_{t\geq0}, \mathbb P)$ be given. Further, for $r > \frac d2-1 \vee 0$, let $Q\in L({L}^2)$ be symmetric and positive definite with $\sqrt{Q}\in HS({L}^2,{H}^{r,2})$. In particular, $Q$ is of trace class as an operator on ${L}^2$.

\begin{assumption} \label{ass:WeakSolStationary}
We assume that $\bm c_0 \colon \Omega \rightarrow \mathcal X_{\bm c} \subset L^\infty$ is $\mathcal{F}_0$-measurable and that 
 $$\phi \in L^2(\Omega;L^2([0,T];H^{1,2}))$$ is $\mathcal F_t$-progressively measurable. Moreover, we assume that $\phi_s \in  L^1(\Omega;W^{1,1}([0,T];L^1))$ and $$\mathbb P(\forall t  \in [0,T]\colon \,\phi_t \in \mathcal X_\phi) = \mathbb P(\bm c_0 \in \mathcal X_{\bm c}) = 1.$$ 
\end{assumption}
In particular, this means that $0 \leq \phi \leq K_\phi$. W.l.o.g., set $K_\phi = 1$. Our aim is to solve equation \eqref{StationaryEquation}under the influence of an independently moving phase-field $\phi$. Such an approach models the interplay between diffusion, reaction kinetics, and stochastic forcing when intracellular dynamics do not significantly interact with membrane dynamics. This simplification allows us to focus on the well-posedness of the stochastic partial differential equation \eqref{StationaryEquation} without the added complexity of coupled phase-field evolution. Even though this section uses similar methods as Section \ref{subsec:WeakSolCoupledSection}, it is in principle independent from the results derived therein: In this section, it is not assured that $\phi_t$ is strictly positive for $t > 0$.

Since strict positivity of $\phi \geq 0$ is not a given, we begin this section by introducing a notion of weak derivative weighted by $\phi $. This will serve as a basis for our notion of solution of \eqref{StationaryEquation}.
\begin{definition}[Weighted weak derivative] \label{def:WeightWeakDeriv}
Let $\phi \in H^{1,2}$ be nonnegative and let $$u \in L^2(\mathbb T^d)$$ be given. A weighted weak derivative $$\partial_i u \in L^2(\mathbb T^d;\phi \d {\bm x})$$ of $u$ with respect to $\phi$ is defined by the identity
$$
\int \phi \partial_i u \cdot v \d {\bm x} = - \int \partial_i \phi \cdot u v \d {\bm x} - \int \phi u \cdot \partial_i v \d {\bm x}
$$ 
for any $v \in C^\infty(\mathbb T^d)$ and $1 \leq i \leq d$.
\end{definition}
\begin{remark}
For any Radon measure $\mu$, $C^\infty(\mathbb T^d)$ is dense in $L^p(\mathbb T^d; \mu)$. Therefore, any two candidate weak derivatives $u_1, u_2$ must be equal in $L^2(\mathbb T^d; \phi \d {\bm x})$ as it would follow that $$\int \phi(u_1-u_2)v \d {\bm x} = 0$$ for all $v \in C^\infty(\mathbb T^d)$. 
\end{remark}
\begin{remark}
If $\phi, u \in L^\infty$, then existence of the weighted weak derivative implies $\phi u \in H^{1,2}(\mathbb T^d)$. In the converse direction, one can only infer that $u$ has a weighted weak derivative w.r.t. $\phi^2$.
\end{remark}
\begin{definition}[Weighted martingale solution, uncoupled case] 
\label{MartSolStationary} 
Fix $T > 0$, a trace class operator $Q \in L({L}^2)$ and a filtered probability space $(\Omega', \mathcal F', (\mathcal F'_t)_{t\geq0},\mathbb P')$. We say that $$\phi \in L^2(\Omega';L^2([0,T]; H^{1,2})\cap W^{1,1}([0,T];L^1)), ~(\phi,\bm c) \in L^\infty(\Omega' \times [0,T]; L^\infty \oplus {L}^\infty)$$ solve the martingale problem associated with equation \eqref{StationaryEquation} for noise covariance $Q$ if
\begin{enumerate}[(i)]
    \item $\phi, \bm c$ are $\mathcal F'_t$-progressively measurable such that, $\mathbb{P}'$-almost surely
    \item $\phi \bm c \in C_w([0,T];{L}^2) \cap L^2([0,T];{H}^{1,2})$ and the weak $\phi$-weighted gradient $\nabla \bm c$ exists $\d t$-almost everywhere.
    \item and, for all $\bm v \in {H}^{1,2}_b$ and $t \in [0,T]$, the identity
    $$\begin{aligned}
    \langle \bm c_t, \bm v \rangle_{\phi_t} &= \langle \bm c_0, \bm v\rangle_{\phi_0} + \int_0^t (- \langle \bm D \nabla \bm c_s, \nabla \bm v \rangle_{\phi_s} + \langle  f(\phi_s,\bm c_s),\bm v\rangle_{\phi_s} + \langle \partial_s\phi_s \bm c_s, \bm v\rangle)\d s + \left \langle \bm v, M_t \right \rangle
    \end{aligned}$$ holds, where $\bm c_0 \sim \mu_{\bm c_0}$ by abuse of notation (cf. Assumption \ref{ass:WeakSolStationary})
    \item Here, $M$ denotes a continuous, square integrable ${L}^2$-valued $\mathcal F'_t$-martingale with covariation $$\int_0^\cdot \phi_s b(\phi_s, \bm c_s) Q  b^\ast(\phi_s, \bm c_s) \phi^\ast_s\d s.$$ 
\end{enumerate}
Here, $\langle \bm u, \bm v \rangle_\phi \coloneqq \langle \phi \bm u, \bm v \rangle_{{L}^2}$ for $\bm u, \bm v \in {L}^2(\mathbb T^d; \phi \d {\bm x})$, and similarly for the respective gradients. 
\end{definition}
\begin{remark}
The type of weighted solution we introduced is suited to the regularity of $\partial_s \phi \in L^1([0,T];L^1)$. If one seeks to apply this formalism to weights with $$\partial_s \phi \in L^2([0,T];(H^{1,2})^\ast)+L^1([0,T];L^1),$$ as in Section \ref{subsec:WeakSolCoupledSection}, then one similarly needs to choose $\phi^2$ as a weight to obtain the right limits.
\end{remark}
\begin{theorem} 
\label{StationaryExistence}
Let $\phi$ be as described in the introduction to this section.  Then there exists a filtered 
probability space $(\Omega', \mathcal F', (\mathcal F'_t)_{t\geq 0}, \mathbb P')$ and a solution 
$(\phi', \bm c)$ of the weighted martingale problem (Definition \ref{MartSolStationary}) associated with 
equation \eqref{StationaryEquation} with $\phi' \sim \mu_{\phi}$ and noise covariance $Q$. In particular, it holds that $\phi \bm c \in C([0,T];{L}^2)$, with weighted weak derivative $$\nabla \bm c \in L^2(\Omega;L^2([0,T]\times \mathbb T^d;\phi_t \d \bm x \d t))$$
and $\bm c_t \in \mathcal X_{\bm c}$ for all $t \in [0,T]$, $\mathbb P'$-almost surely.
\end{theorem}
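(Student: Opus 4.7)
The plan is to regularize the singular coefficient by $\phi^\epsilon := \phi + \epsilon$ for $\epsilon \in (0,1]$, solve the resulting non-singular variational SPDE
$$\d \bm c^\epsilon_t = \Big(\tfrac{1}{\phi^\epsilon_t}\nabla\cdot(\phi^\epsilon_t \bm D\nabla \bm c^\epsilon_t)+\bm f(\phi^\epsilon_t,\bm c^\epsilon_t)\Big)\d t + \bm b_{\bm\eta}(\phi^\epsilon_t,\bm c^\epsilon_t)\d\bm W_t,$$
and then pass to the limit $\epsilon\to 0$ by a tightness/Skorokhod argument. For each $\epsilon>0$ the coefficient $\nabla\phi^\epsilon_t/\phi^\epsilon_t$ is bounded in $L^2(\Omega;L^2([0,T];L^2))$, the drift is hemicontinuous and weakly monotone on $\bm L^\infty$ balls, and $\bm b_{\bm\eta}$ is bounded and Lipschitz, so classical variational theory in the Gelfand triple $\bm H^{1,2}\hookrightarrow \bm L^2\hookrightarrow (\bm H^{1,2})^\ast$ yields a probabilistically strong solution $\bm c^\epsilon$ on the original probability space. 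Using the sign conditions in Assumption \ref{NemytskiiAssumption} and testing with $(c^\epsilon_i-K_i)^+$ and $(L_i-c^\epsilon_i)^+$ produces the pointwise bound $\bm c^\epsilon_t\in\mathcal X_{\bm c}$ uniformly in $\epsilon,t,\omega$.

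\textbf{Weighted a priori estimates.} Applying It\^o's formula to $t\mapsto\tfrac12\int\phi^\epsilon_t|\bm c^\epsilon_t|^2\d x$, handling the $\partial_t\phi^\epsilon\cdot|\bm c^\epsilon|^2$ contribution separately by means of the $L^1$-regularity of $\partial_t\phi$, and integrating by parts in the diffusion term yields, schematically,
$$\E\!\left[\int_0^T\!\int\phi^\epsilon_s|\nabla\bm c^\epsilon_s|^2\d x\d s\right] \leq C$$
uniformly in $\epsilon$. Combining this with the uniform $L^\infty$ bound on $\bm c^\epsilon$, the hypothesis $\nabla\phi\in L^2(\Omega;L^2([0,T];L^2))$, and the identity $\nabla(\phi^\epsilon\bm c^\epsilon)=\phi^\epsilon\nabla\bm c^\epsilon+\bm c^\epsilon\otimes\nabla\phi^\epsilon$ gives a uniform bound on $\phi^\epsilon\bm c^\epsilon$ in $L^2(\Omega;L^2([0,T];\bm H^{1,2}))$. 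A further uniform time-increment estimate on $\phi^\epsilon\bm c^\epsilon$ in a fractional Sobolev space with values in $(\bm H^{1,2}_b)^\ast$ follows directly from the equation for $\d(\phi^\epsilon\bm c^\epsilon)$, using BDG for the martingale part and the $L^1([0,T];L^1)$-integrability of $\partial_s\phi$.

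\textbf{Tightness and passage to the limit.} By Aubin--Lions--Simon, the laws of $\phi^\epsilon\bm c^\epsilon$ are tight in $L^2([0,T];\bm L^2)\cap C([0,T];(\bm H^{1,2}_b)^\ast)$; jointly with the fixed law of $\phi$ and tightness of $\bm W$, a Jakubowski--Skorokhod representation produces a new probability space $(\Omega',\mathcal F',\mathbb P')$ and versions $(\phi^n,\bm c^n,\bm W^n)\to(\phi',\bm c,\bm W)$ almost surely along a subsequence, with $\phi'\sim\mu_\phi$. In the weighted weak formulation, the strong $\bm L^2$-convergence of $\phi^n\bm c^n\to\phi'\bm c$ together with the uniform $L^\infty$ bound identifies the limits of $\bm f(\phi^n,\bm c^n)$ and $\bm b_{\bm\eta}(\phi^n,\bm c^n)$ via Lipschitz continuity on $\mathcal X_\phi\times\mathcal X_{\bm c}$; the weak $\bm L^2$-limit of $\nabla(\phi^n\bm c^n)$, combined with $\nabla\phi^n\to\nabla\phi'$, yields an object of the form $\phi'\bm u+\bm c\otimes\nabla\phi'$ with $\bm u\in\bm L^2(\phi'\d x)$ which, by the very definition of the weighted weak derivative, coincides with $\nabla\bm c$ as used in Definition \ref{MartSolStationary}; the continuous $\bm L^2$-valued martingale $M$ is identified in the standard way via convergence of predictable quadratic variations.

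\textbf{Strong continuity and main obstacle.} Once the weighted identity is established, $\phi'\bm c$ satisfies a distributional equation of the form $\d(\phi'\bm c_t)=G_t\d t+\nabla\cdot(\phi'\bm D\nabla\bm c_t)\d t+\d M_t$ whose drift lies in $L^2([0,T];(\bm H^{1,2})^\ast)+L^1([0,T];\bm L^1)$; an a posteriori It\^o-formula argument in a suitable Gelfand triple then upgrades the a priori $C_w([0,T];\bm L^2)$-regularity of $\phi'\bm c$ to $C([0,T];\bm L^2)$. The main technical obstacle is the identification of the weighted gradient in the limit: since $\phi'$ may vanish on sets of positive Lebesgue measure, $\nabla\bm c^n$ is controlled only in $\bm L^2(\phi'\d x)$ and not in any classical Sobolev norm, so every passage to the limit must respect the weighted structure and cannot rely on Sobolev compactness of $\bm c^n$ alone. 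A secondary delicate point is the convergence of $\int\langle\partial_s\phi^n\cdot\bm c^n_s,\bm v\rangle\d s$, which requires combining the strong $\bm L^2$-convergence of $\bm c^n$ with the merely $L^1$ time--space integrability of $\partial_s\phi^n$ and the uniform $L^\infty$ bound on $\bm c^n$.
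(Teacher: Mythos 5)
Your overall strategy coincides with the paper's: regularise, prove an invariance bound $\bm c^\epsilon_t\in\mathcal X_{\bm c}$, derive the weighted energy estimate $\mathbb E\int_0^T\int\phi^\epsilon|\nabla\bm c^\epsilon|^2\,\mathrm dx\,\mathrm dt\le C$, run the compactness/Skorokhod argument on the weighted process $\phi^\epsilon\bm c^\epsilon$ rather than on $\bm c^\epsilon$, identify the weighted weak gradient in the limit, and upgrade weak to strong $\bm L^2$-continuity a posteriori. However, there is a genuine gap at your very first step. You regularise only by $\phi\mapsto\phi+\epsilon$ and claim that ``classical variational theory'' in the Gelfand triple $\bm H^{1,2}\hookrightarrow\bm L^2\hookrightarrow(\bm H^{1,2})^\ast$ yields a strong solution for each fixed $\epsilon$. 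But the coefficient $\nabla\phi_t/(\phi_t+\epsilon)$ is, at a.e.\ fixed time, only in $L^2(\mathbb T^n)$, so the drift term $\nabla\bm c\,\nabla\phi_t/(\phi_t+\epsilon)$ lies merely in $\bm L^1$ and does not define an operator from $V=\bm H^{1,2}$ into $V^\ast$ for $n\ge2$; likewise the coercivity and local monotonicity conditions cannot be verified, since controlling $\langle\nabla\bm c\,\nabla\phi/(\phi+\epsilon),\bm c\rangle$ by the dissipation would require an a priori $L^\infty$ bound on $\bm c$, which is exactly what you have not yet established (your ``weakly monotone on $\bm L^\infty$ balls'' is not admissible: the variational framework requires the conditions on all of $V$, and the invariance of $\mathcal X_{\bm c}$ is itself proved only after existence). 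This is precisely why the paper introduces a second truncation layer, replacing $\nabla\phi$ by the clipped gradient $\nabla^\tau\phi$ in \eqref{MoreTruncStationaryEquation}, so that the coefficient is bounded by $\tau/\epsilon$; existence (Proposition \ref{StationaryTruncExistence}) and the $\mathcal X_{\bm c}$-invariance both rely on boundedness of $\norm{\nabla^\tau\phi_t}_{L^\infty}$, and a first tightness/limit passage $\tau\to\infty$ at fixed $\epsilon$ (Propositions \ref{FirstStationaryApriori}--\ref{StationaryApproximation}) is needed before your limit $\epsilon\to0$ can start. Without this extra layer (or some substitute making the coefficient bounded), your construction of $\bm c^\epsilon$ does not get off the ground.

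Two further points are glossed over but are fixable. First, the application of the It\^o/product formula to $\tfrac12\int\phi^\epsilon|\bm c^\epsilon|^2\,\mathrm dx$ is not standard here: $\nabla\phi\,\nabla\bm c^\epsilon$ has only $L^1$ space--time integrability and $\partial_t\phi\in L^1_tL^1_x$, which is why the paper reproves the product and square identities in this low-integrability regime (Lemmas \ref{MyProdIto} and \ref{MySquareIto}); you should either invoke such a tailored lemma or prove one. Second, in the limit identification you write the strong $\bm L^2$-limit of $\phi^n\bm c^n$ directly as $\phi'\bm c$; since tightness is only available for the weighted process, you must first define $\bm c\coloneqq\mathds 1_{\{\phi'>0\}}\bm X/\phi'$ from the limit $\bm X$ and verify that $\bm X$, $\nabla\bm X$ and $\nabla\phi'$ vanish on $\{\phi'=0\}$ (the paper's identity \eqref{WellDefinedDivision}) before the decomposition $\nabla\bm X=\phi'\nabla\bm c+\bm c\otimes\nabla\phi'$ and the weighted-weak-derivative identification make sense; you correctly flag this as the main obstacle but do not supply the vanishing argument.
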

As mentioned in the discussion after Assumption \ref{ass:WeakSolNoiseTrunc}, we will first truncate the nonlinearities in the equation. To this end, let
$$ 
\bm u \mapsto \tilde{\bm u} \coloneqq (L_i \vee u_i \wedge K_i)_{1 \leq i \leq m} 
$$ 
denote the projection of any measurable function $\bm u \colon \mathbb T^d \rightarrow \mathbb R^m$ 
onto $\mathcal X_{\bm c}$ and define 
\begin{equation} \label{eq:pointwisetruncation}
\tilde{ f}(\phi, \bm u) \coloneqq  f(\phi, \tilde{\bm u})
\end{equation} 
Consider the equation
\begin{equation} \label{MoreTruncStationaryEquation}
\begin{aligned}
\d \bm c^\tau_t
&= \left(\bm D \Delta \bm c^\tau_t + \bm D\frac{\nabla \bm c^\tau_t\nabla^\tau \phi_t }{\phi_t + \epsilon} + \tilde{ f}(\phi_t, \bm c^\tau_t)\right) \d t + { b}(\phi_t,\bm c^\tau_t) \d {\bm W}^Q_t.
\end{aligned}
\end{equation}
where $\bm D$ is a diagonal matric with strictly positive entries and 
\begin{equation} \label{eq:WeakSolGradientCutoff}
\nabla^\tau u \coloneqq \begin{cases}
\nabla u & \lvert \nabla u \rvert \leq \tau \\
\tau \frac{\nabla u}{\lvert \nabla u \rvert}
&\text{else.}\end{cases}    
\end{equation}
for $\tau > 0$ and $u$ weakly differentiable. This equation is substantially simpler to solve and existence of solutions is the content of our first proposition.
\begin{proposition} \label{prop:WeakSolStationaryTruncExistence}
Assume the conditions specified at the start of this section and let ${\bm W}^Q_t$ be an $\mathcal F_t$-adapted, ${H}^{r,2}$-valued Wiener process with covariance $Q$. Then there exists a unique probabilistically strong variational solution $$\bm c^\tau \in L^2(\Omega;L^2([0,T];{H}^{1,2}) \cap C([0,T];{L}^2))$$ of \eqref{MoreTruncStationaryEquation} with $\bm c^\tau_0 =\bm c_0$.
\end{proposition}
\begin{proof}
Consider the Gelfand triple ${H}^{1,2} \hookrightarrow {L}^2 \hookrightarrow ({H}^{1,2})^\ast$. Under the specified assumptions on $\phi$, it is standard to verify the conditions of Theorem 5.1.3 in \cite{LiuRoeckner2015} for $\alpha = p = 2$ (which implies $\beta = 0$), $f_t \equiv K$ for some constant $K > 0$ and $\rho(v) \equiv 0$.
The only notable aspect is proving monotonicity in ${L}^2$ for the Hilbert--Schmidt norm of the amplitude of the noise. W.l.og. assume that $d \geq 2$. By the assumption that $r + 1 > \frac d2$, we know that there exists $\nu > d $ such that $r + \frac{d}{\nu} > \frac{d}{2}$. In the following, let $\nu' = \frac{\nu}{\nu-2}$ denote the dual exponent of $\nu/2$. We seek to show that the operator $$  f \mapsto { b}(\phi,\bm u)  f$$ is monotonous given $f \in {H}^{r,2}$. By Lipschitzness of $ b(\phi, \cdot)$ and the Hölder inequality, we find that 
$$
\begin{aligned}
\norm{({ b} (\phi_t,\bm{u_1}) - { b}(\phi_t,\bm{u_2}))  f}^2_{{L}^2} \leq \norm{{ b}(\phi_t,\bm{u_1}) - { b}(\phi_t,\bm {u_2})}^2_{{L}^{2\nu'}} \norm{  f}^2_{{L}^{\nu}}.
\end{aligned}$$ Since $\nu > 2$, the Gagliardo-Nirenberg inequality in bounded domains entails that for $\theta = d/\nu < 1$, $$\begin{aligned}
\norm{{ b}(\phi_t,\bm{u_1}) - { b}(\phi_t,\bm {u_2})}^2_{{L}^{2\nu'}} \leq \const \norm{\bm{u_1}-\bm{u_2}}^2_{{L}^{2\nu'}} 
&\leq  \const \sum_{i=1}^m \norm{\nabla ((u_1)_i-(u_2)_i)}^{2\theta}_{L^2} \norm{(u_1)_i-(u_2)_i}^{2(1-\theta)}_{L^2} \\&+ \const \norm{\bm{u_1}-\bm{u_2}}^{2}_{{L}^2}.
\end{aligned}$$ since the dissipativity of the Laplacian absorbs the excess energy after an application of Young's inequality for products, we obtain monotonicity. The Sobolev embedding ${H}^{r,2} \hookrightarrow L^\nu$ now concludes the argument.
\end{proof}

\begin{proposition} \label{prop:cboundedness}
The preceding existence proof only shows that $\bm c^\tau \in L^2(\Omega;C([0,T];{L}^2))$. However, under the assumption that $\bm c_0 \in \mathcal X_{\bm c}$ almost surely, we obtain that $\bm c^\tau_t \in \mathcal X_{\bm c}$ for all $t \in [0,T]$, $\mathbb P$-a.s.
\end{proposition}
\begin{proof}
We show that for some $\chi$ which is positive only for $\bm z \notin \mathcal K$,  $\chi(\bm c_t) \equiv 0$ whenever $\chi(\bm c_0) \equiv 0$. Hence, in particular, 
$$\bm c^\tau \in L^\infty\left(\Omega;L^\infty([0,T];{L}^\infty) \right).$$ The proof uses an infinite-dimensional Itô formula and follows the same reasoning as Theorem 2.24 in \cite{Pardoux2021} or Lemma 3.3. in \cite{Sauer2016AnalysisAA}. We exploit dissipativity of the Laplacian and the invariance Assumption \ref{ass:WeakSolInvariance} together with the truncation \ref{ass:WeakSolNoiseTrunc}. As the existence proof, it relies on boundedness of $\norm{\nabla^\tau \phi_t}_{L^\infty(\mathbb T^d)}$.    
\end{proof}
This in particular shows that this process solves \begin{equation} \label{MoreOrLessTruncStationaryEquation}
\begin{aligned}
\d \bm c^\tau_t = \left(\bm D\Delta \bm c^\tau_t +\bm D \frac{ \nabla \bm c^\tau_t\nabla^\tau \phi_t}{\phi_t + \epsilon} +  f(\phi_t, \bm c^\tau_t)\right) \d t + b(\phi_t,\bm c^\tau_t) \d {\bm W}^Q_t.
\end{aligned}
\end{equation}
In the following, let $W^{\alpha, 2}([0,T];B)$ denote the $B$-valued Sobolev-Slobodeckij space given some Banach space $B$, cf. \cite{Flandoli1995MartingaleAS}.
\begin{lemma}[\cite{SimonCompactness}] \label{p1compactness}
Let $B_0 \subset B \subset B_1$ and $\tilde B_0 \subset \tilde B$ be Banach spaces. Suppose that the injective embeddings $B_0 \hookrightarrow B$ and $\tilde B_0 \hookrightarrow \tilde B$ are compact and that $B \hookrightarrow B_1$ is continuous. Then the embeddings $$L^1([0,T];B_0) \cap W^{\alpha,1}([0,T];B_1) \hookrightarrow L^1([0,T];B)$$ and $$W^{\beta_1,q_1}([0,T];\tilde B_0) + \dots + W^{\beta_n, q_n}([0,T];\tilde B_0) \hookrightarrow C([0,T];\tilde B)$$ are compact for any $\alpha > 0$ and $\beta_1,\dots,\beta_n \in (0, 1)$, $q_1,\dots,q_n > 1,$ with $\beta_iq_i > 1$.
\end{lemma}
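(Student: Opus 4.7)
The plan is to treat the two embeddings separately, since they rely on rather different mechanisms. The first compactness is a version of the Aubin--Lions--Simon lemma, whose engine is Ehrling's interpolation inequality combined with the Kolmogorov--Riesz--Fréchet compactness criterion in $L^1([0,T];B)$. The second is essentially Sobolev embedding into Hölder spaces followed by Arzelà--Ascoli. I would organise the proof so that the continuous-in-time statement is presented after the $L^1$ statement, since conceptually it is simpler.

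For the first embedding, I would start by recalling Ehrling's lemma in the setting $B_0 \hookrightarrow B \hookrightarrow B_1$ with the first embedding compact and the second continuous: for any $\varepsilon > 0$ there exists $C_\varepsilon$ such that $\norm{u}_B \leq \varepsilon \norm{u}_{B_0} + C_\varepsilon \norm{u}_{B_1}$ for all $u \in B_0$. Next, I would take a bounded sequence $(u_n)$ in $L^1([0,T];B_0) \cap W^{\alpha,1}([0,T];B_1)$ and verify the two Kolmogorov--Riesz criteria for relative compactness in $L^1([0,T];B)$: equi-integrability at the boundary (immediate from the $L^1(B_0) \hookrightarrow L^1(B)$ bound combined with Ehrling) and uniform decay of time translations $\norm{u_n(\cdot + h) - u_n(\cdot)}_{L^1(B)} \to 0$ as $h \to 0$. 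For the latter, I would apply Ehrling pointwise in time to split
\[
\norm{u_n(t+h) - u_n(t)}_B \leq \varepsilon\bigl(\norm{u_n(t+h)}_{B_0} + \norm{u_n(t)}_{B_0}\bigr) + C_\varepsilon \norm{u_n(t+h) - u_n(t)}_{B_1},
\]
integrate over $t$, and dominate the $B_1$ increments using the $W^{\alpha,1}$ seminorm via the elementary estimate $\int_0^{T-h} \norm{u(t+h)-u(t)}_{B_1}\d t \lesssim h^{\alpha}\norm{u}_{W^{\alpha,1}([0,T];B_1)}$, which is a standard consequence of the Sobolev--Slobodeckij definition together with a Fubini argument on the double integral. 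Taking $h \to 0$ and then $\varepsilon \to 0$ yields the compactness.

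For the second embedding, I would first reduce to a single summand: any bounded sequence in the sum space decomposes (non-uniquely) into bounded pieces in each $W^{\beta_i,q_i}([0,T];\tilde B_0)$, and compactness of a finite sum of compactly embedded subspaces into $C([0,T];\tilde B)$ follows by diagonal extraction once each piece is treated. For a single space $W^{\beta,q}([0,T];\tilde B_0)$ with $\beta q > 1$, the fractional Morrey embedding gives a continuous inclusion into $C^{0,\beta - 1/q}([0,T];\tilde B_0)$, so a bounded sequence is equicontinuous as a family of $\tilde B_0$-valued functions. Combined with the compact embedding $\tilde B_0 \hookrightarrow \tilde B$, Arzelà--Ascoli in $C([0,T];\tilde B)$ delivers a convergent subsequence.

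The main obstacle is the time-translation estimate in the $W^{\alpha,1}$ case with $\alpha \in (0,1)$ possibly small and integrability exponent only $1$, which is the sharpest part of Simon's original argument. The cleanest route is to express the $W^{\alpha,1}$ seminorm through the equivalent Besov-type double integral and apply Fubini, avoiding any need for a stronger integrability exponent. Once that estimate is in hand, the Ehrling step makes $B$ appear essentially for free, and the remainder of the proof is formal.
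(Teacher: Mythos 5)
The paper does not prove this lemma at all: it is quoted from Simon's compactness paper (with the second, sum-space embedding being the Flandoli--Gat\c{a}rek-type variant), so there is no internal proof to compare against. Your reconstruction follows exactly the standard route of those references and is essentially correct: the Fubini argument on the Gagliardo double integral does give $\int_0^{T-h}\|u(t+h)-u(t)\|_{B_1}\,\mathrm dt \leq C h^\alpha [u]_{W^{\alpha,1}([0,T];B_1)}$ (insert an average over $s\in[t,t+h]$ and use $|t-s|^{1+\alpha}\leq h^{1+\alpha}$), Ehrling then transfers the translate estimate to $B$, and for the second embedding the fractional Morrey embedding $W^{\beta,q}\hookrightarrow C^{0,\beta-1/q}$ for $\beta q>1$ plus Arzel\`a--Ascoli, together with near-optimal decompositions and diagonal extraction for the sum space, is precisely how the result is usually obtained.

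One point needs repair in the write-up of the first embedding. Equi-integrability is \emph{not} ``immediate from the $L^1(B_0)\hookrightarrow L^1(B)$ bound'': boundedness in $L^1$ never implies uniform integrability, and, more importantly, for Banach-space-valued $L^1$ the Kolmogorov--Riesz--Fr\'echet/Simon criterion does not ask for equi-integrability but for a spatial compactness condition, namely that the averages $\bigl\{\int_{t_1}^{t_2}u_n(t)\,\mathrm dt\bigr\}_n$ be relatively compact in $B$ for every subinterval $[t_1,t_2]$ (together with the uniform smallness of translates, which you do establish). That condition is exactly what your hypotheses supply: the averages are bounded in $B_0$, and $B_0\hookrightarrow B$ is compact. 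So the gap is terminological rather than structural --- replace the equi-integrability claim by the average-compactness condition and conclude via Simon's Theorem~1 (or, equivalently, first deduce relative compactness in $L^1([0,T];B_1)$ and then upgrade to $L^1([0,T];B)$ by Ehrling and the uniform $L^1(B_0)$ bound) --- and your argument is complete.
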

\begin{remark}
Note that we do not make any separability or reflexivity assumptions in the preceding theorem. However, in applications, one needs to ensure that one deals with collections of strongly measurable functions. Usually, this is implied by separability of the space $B_0$.
\end{remark}

We now show that the laws of $(\bm c^\tau)_{\tau > 0}$ are uniformly tight as measures on $L^1([0,T];{L}^2)$. Observe that for any dimension $d \geq 2$ and bounded domain $\mathcal O$, $H^{1,2}(\mathcal O)$ embeds compactly into $L^q$ for $1 \leq q < \frac{2d}{(d-2)_+}$. This holds in particular for $q = 2$. Therefore, ${H}^{1,2} \hookrightarrow {L}^2$ compactly and we can apply Lemma \ref{p1compactness}.

Then, we can extract a weakly convergent subsequence and show that its limit constitutes a solution of the martingale problem associated with the equation  
\begin{equation} \label{MoreLessTruncStationaryEquation}
\begin{aligned}
\d \bm c^\epsilon_t
&= \left(\bm D \Delta \bm c^\epsilon_t + \bm D\frac{\nabla \bm c^\epsilon_t\nabla \phi}{\phi + \epsilon}  + { f}(\phi, \bm c^\epsilon_t)\right) \d t + { b}(\phi,\bm c^\epsilon_t) \d {\bm W}^Q_t.
\end{aligned}
\end{equation} To this end, we derive uniform bounds in expectation on $(\bm c^\tau)_{\tau > 0}$. We remind the reader of our notation ${H}^{1,2}_b \coloneqq {H}^{1,2} \cap {L}^\infty$.
\begin{proposition} \label{FirstStationaryApriori}
Let ${\bm W}^Q_t$ be an $\mathcal F_t$-adapted ${H}^{r,2}$-valued $Q$-Wiener process with covariance $Q$. Then the corresponding family of solutions $(\bm c^\tau)_{\tau > 0}$ of \eqref{MoreOrLessTruncStationaryEquation} is uniformly bounded in $$L^2\left(\Omega; L^2([0,T];{H}^{1,2})\right) \cap L^1\left(\Omega;W^{\alpha,1}([0,T];({H}^{1,2}_b )^\ast)\right)$$ for arbitrary $\alpha \in (0, \frac12)$. Further, the sequence of stochastic integrals $$M^\tau = \int_0^\cdot { b}(\phi_s,\bm c^\tau_s) \d {\bm W}^Q_s$$ is uniformly bounded in $L^p(\Omega;C^{0,\alpha}([0,T];{L}^2))$ for any $\alpha \in (0,\frac12)$ and $p \in [2,\infty)$.
\end{proposition}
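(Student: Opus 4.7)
The plan is to derive the three required bounds by separating the drift and martingale components of $\bm c^\tau$ after establishing an energy inequality, exploiting crucially that the $\bm L^\infty$ bound $\bm c^\tau_t \in \mathcal{X}_{\bm c}$ already obtained is deterministic and independent of $\tau$. I would first apply the infinite-dimensional Itô formula to $\|\bm c^\tau_t\|^2_{\bm L^2}$ in the Gelfand triple $\bm H^{1,2} \hookrightarrow \bm L^2 \hookrightarrow (\bm H^{1,2})^\ast$ used in Proposition \ref{StationaryTruncExistence}. The Laplacian term gives the coercive contribution $-2\langle \bm D \nabla \bm c^\tau_s, \nabla \bm c^\tau_s\rangle$, and the singular drift can be estimated via Cauchy–Schwarz and Young's inequality as
\[
2\left|\left\langle \bm c^\tau_s, \bm D \frac{\nabla \bm c^\tau_s \nabla^\tau \phi_s}{\phi_s + \epsilon}\right\rangle\right| \leq D_{\min}\|\nabla \bm c^\tau_s\|^2_{\bm L^2} + \const\, \|\bm c^\tau_s\|^2_{\bm L^\infty}\, \frac{\|\nabla \phi_s\|^2_{L^2}}{\epsilon^2},
\]
using only that $|\nabla^\tau \phi_s|\leq|\nabla \phi_s|$, which is what makes the estimate uniform in $\tau$. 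The reaction term and the quadratic variation contribution $\|\bm b_{\bm \eta}(\phi_s,\bm c^\tau_s)\sqrt Q\|^2_{HS}$ are controlled by the $\bm L^\infty$ bound on $\bm c^\tau$ together with the compact support of $\bm \eta$ and the assumption $\sqrt Q \in HS(\bm H^{r,2})$; the remaining martingale term vanishes in expectation. Taking expectations and invoking $\phi\in L^2(\Omega;L^2([0,T];H^{1,2}))$ yields the desired uniform bound in $L^2(\Omega;L^2([0,T];\bm H^{1,2}))$.

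Next I would write $\bm c^\tau_t = \bm c_0 + \int_0^t \bm A^\tau_s\,\d s + M^\tau_t$ and handle the two pieces separately with respect to fractional time regularity. For the drift, each summand of $\bm A^\tau_s$ tested against $\bm v\in\bm H^{1,2}_b$ is controlled by the previous energy estimate: $\bm D\Delta \bm c^\tau_s$ pairs as $-\langle\bm D\nabla\bm c^\tau_s,\nabla\bm v\rangle$, dominated by $\|\bm c^\tau_s\|_{\bm H^{1,2}}\|\bm v\|_{\bm H^{1,2}}$; the singular term contributes $\|\bm v\|_{\bm L^\infty}\|\nabla\bm c^\tau_s\|_{\bm L^2}\|\nabla\phi_s\|_{L^2}/\epsilon$, using exactly the $L^\infty$-part of $\bm H^{1,2}_b$; and $\bm f(\phi_s,\bm c^\tau_s)$ is bounded in $\bm L^\infty$ by Assumption \ref{NemytskiiAssumption} together with the $\bm L^\infty$ bound on $\bm c^\tau$. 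Cauchy–Schwarz in time and the uniform bound from Step~1 then give $\bm A^\tau\in L^1(\Omega;L^1([0,T];(\bm H^{1,2}_b)^\ast))$, so that $t\mapsto\int_0^t\bm A^\tau_s\,\d s$ is uniformly bounded in $L^1(\Omega;W^{1,1}([0,T];(\bm H^{1,2}_b)^\ast))\hookrightarrow L^1(\Omega;W^{\alpha,1}([0,T];(\bm H^{1,2}_b)^\ast))$ for every $\alpha\in(0,1)$.

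For the martingale $M^\tau$, the uniform boundedness of $\bm b_{\bm \eta}$ (via $\bm\eta$ of compact support and the locally-Lipschitz $b_{ij}$) together with $\sqrt Q\in HS(\bm H^{r,2})$ yields $\|\bm b_{\bm \eta}(\phi_s,\bm c^\tau_s)\sqrt Q\|_{HS}\leq\const$ uniformly in $s,\omega,\tau$. Burkholder–Davis–Gundy then produces
\[
\E\|M^\tau_t - M^\tau_s\|^p_{\bm L^2} \leq \const\,|t-s|^{p/2}
\]
for every $p\in[2,\infty)$, and Kolmogorov–Chentsov upgrades this to uniform boundedness in $L^p(\Omega;C^{0,\alpha}([0,T];\bm L^2))$ for every $\alpha\in(0,\tfrac12)$, which is the second asserted bound. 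Since $C^{0,\alpha}([0,T];\bm L^2)\hookrightarrow W^{\alpha,1}([0,T];\bm L^2)\hookrightarrow W^{\alpha,1}([0,T];(\bm H^{1,2}_b)^\ast)$, this contribution combines with the drift to give the $W^{\alpha,1}$ statement.

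The main technical obstacle is obtaining the $\tau$-uniform control of the singular drift in Step~1: naively, the factor $\nabla^\tau\phi_s/(\phi_s+\epsilon)$ has an $L^\infty$-bound of order $\tau/\epsilon$, which blows up in the relevant limit. The escape is that we never integrate this factor pointwise; instead, we exploit the pointwise domination $|\nabla^\tau\phi|\le|\nabla\phi|$, so that only the $\omega$-wise $L^2$-in-space-and-time regularity of $\nabla\phi$ enters, which is assumed. Everything else, including the handling of the $\bm L^\infty$-part of the test space $\bm H^{1,2}_b$ in Step~2, is a routine consequence of the a-priori $\bm L^\infty$ bound $\bm c^\tau\in\mathcal X_{\bm c}$ established in the paragraph preceding the proposition.
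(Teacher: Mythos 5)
Your argument is correct and follows essentially the same route as the paper, which omits the proof and defers to the method of Proposition \ref{stationaryApriori}: an Itô energy estimate in which the singular drift is absorbed via $|\nabla^\tau\phi|\le|\nabla\phi|$ and $(\phi+\epsilon)^{-1}\le\epsilon^{-1}$, a drift bound in $W^{1,1}([0,T];(\bm H^{1,2}_b)^\ast)$, and a fractional-time bound for the martingale part (you use BDG plus Kolmogorov--Chentsov where the paper invokes the Flandoli--Gat\c{a}rek $W^{\alpha,p}$ lemma, which is an equivalent tool here since $\bm b_{\bm\eta}$ is uniformly bounded). One cosmetic fix: $C^{0,\alpha}([0,T];\bm L^2)$ does not embed into $W^{\alpha,1}$ at the endpoint exponent, so deduce the $W^{\alpha,1}$ bound from the $C^{0,\alpha'}$ bound with $\alpha<\alpha'<\tfrac12$, which is harmless because $\alpha$ ranges over an open interval.
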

\begin{proof}
The proof of this bound uses essentially the same methods as the proof of Proposition \ref{stationaryApriori} and is omitted due to its simpler structure. Note that we can only obtain a bound in $W^{\alpha,1}([0,T];({H}^{1,2}_b)^\ast)$ as $\nabla^\tau \phi \nabla \bm c^\tau$ is uniformly bounded only in $L^1(\Omega; L^1([0,T];{L}^1))$.
\end{proof}

\begin{remark}
Since we only obtain that $\bm c^\tau \in W^{\alpha,1}([0,T];(H^{1,2}_b)^\ast)$, we cannot apply Lemma \ref{p1compactness} to deduce weak continuity of the limit in the compactness argument. Thus, we must justify this property a posteriori.
\end{remark}

Since the law of $\phi$ is tight on $W^{1,1}([0,T];L^1) \cap L^2([0,T];H^{1,2})$ by Ulam's tightness theorem, we can thus conclude that the laws $\mu_\tau$ of $(\phi,\partial_t \phi, \bm c_0, \bm c^\tau,M^\tau)_{\tau > 0}$ are uniformly tight as measures on the product space $$W^{1,1}([0,T];L^1) \cap L^2([0,T];H^{1,2})\times  L^1([0,T];L^1) \times {L}^2 \times L^1([0,T];{L}^2) \times C([0,T];({H}^{1,2}_b)^\ast),$$ i.e. for each $\delta > 0$ there exists a compact subset $ K_\delta$ such that $\mu_\tau(K_\delta) > 1-\delta$ for all $\tau > 0$.
By the Banach-Alaoglu theorem, we can further conclude uniform tightness of the distribution of $(\bm c^{\tau}, M^{\tau})$ in $$L^2_w([0,T];{H}^{1,2}) \times L^\infty_{w^\ast}([0,T];{L}^2),$$ where the Lebesgue spaces are equipped with the weak and weak$^\ast$ topology, respectively. By a result of \citet{JakubowskiSkorokhod}, we can thus find a probability space $(\Omega', \mathcal F', \mathbb P')$, a subsequence $(\tau_k)_{k \in \mathbb N} \to \infty$ and random variables $(\phi^{(k)},\partial_t \phi^{(k)},\bm c^{(k)}_0,\bm c^{(k)},M^{(k)})_{k \geq 1}$, $(\phi^\epsilon, \partial_t \phi^\epsilon,\bm c^{\epsilon}_0, \bm c^\epsilon,M)$ with $$(\phi^{(k)},\partial_t \phi^{(k)},\bm c^{(k)}_0,\bm c^{(k)},M^{(k)}) \overset{d}{=} (\phi, \partial_t \phi, \bm c_0, \bm c^{\tau_k},M^{\tau_k})$$ and $$(\phi^{(k)},\partial_t \phi^{(k)},\bm c^{(k)}_0,\bm c^{(k)},M^{(k)}) \overset{\mathcal A}{\to} (\phi^\epsilon, \partial_t \phi^\epsilon, \bm c^{\epsilon}_0, \bm c^\epsilon, M),$$ $\mathbb P'$-almost surely, for \begin{equation} \label{CompactnessSpace}
\begin{aligned}
\mathcal A & \coloneqq W^{1,1}([0,T];L^1) \cap L^2([0,T];H^{1,2})\times  L^1([0,T];L^1) \times {L}^2 \\ &\times L^1([0,T];{L}^2) \cap L^2_w([0,T];{H}^{1,2}) \times C([0,T];\bm ({H}^{1,2}_b)^\ast) \cap L^\infty_{w^\ast}([0,T];{L}^2).
\end{aligned}
\end{equation}
\begin{proposition} \label{StationaryApproximation}
The process $\bm c^\epsilon \in L^2(\Omega';L^2([0,T]; H^{1,2}) \cap C([0,T];{L}^2))$ solves the martingale problem associated with equation \eqref{MoreLessTruncStationaryEquation}, with $\bm c_0 \sim \mu_{\bm c_0}$, $\phi^\epsilon \sim \mu_\phi$ and noise covariance $Q$. Additionally, $\bm c^\epsilon_t \in \mathcal X_{\bm c}$ for all $t \in [0,T]$, almost surely.
\end{proposition}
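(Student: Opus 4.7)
The overall plan is to pass to the limit $k \to \infty$ in the martingale formulation of \eqref{MoreOrLessTruncStationaryEquation} satisfied by each $\bm c^{(k)}$, term by term, using the a.s.\ convergence in $\mathcal A$ provided by the Jakubowski--Skorokhod representation. I would equip $\Omega'$ with the natural augmented filtration generated by the Skorokhod copies and their limits. Since $(\phi^{(k)},\partial_t\phi^{(k)}, \bm c^{(k)}_0, \bm c^{(k)}, M^{(k)}) \overset{d}{=} (\phi,\partial_t\phi, \bm c_0, \bm c^{\tau_k}, M^{\tau_k})$ and the right-hand side is a strong variational solution of \eqref{MoreOrLessTruncStationaryEquation} by Proposition \ref{StationaryTruncExistence}, equality of joint laws transfers the martingale identity, the martingale property of $M^{(k)}$, and its quadratic variation $\int_0^\cdot \bm b_{\bm \eta}(\phi^{(k)}_s, \bm c^{(k)}_s)\, Q\, \bm b_{\bm \eta}^\ast(\phi^{(k)}_s, \bm c^{(k)}_s) \d s$ to the copies on $\Omega'$.

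Fix a test function $\bm v \in \bm H^{1,2}_b$ and $t \in [0,T]$. The linear Laplacian term $\int_0^t \langle \bm D \nabla \bm c^{(k)}_s, \nabla \bm v \rangle \d s$ converges by weak $L^2([0,T]; \bm H^{1,2})$ convergence of $\bm c^{(k)}$ against the fixed test element. The nonlinearity $\tilde{\bm f}(\phi^{(k)}, \bm c^{(k)})$ coincides with $\bm f(\phi^{(k)}, \bm c^{(k)})$ because the invariance argument following Proposition \ref{StationaryTruncExistence} gives $\bm c^{(k)}_t \in \mathcal X_{\bm c}$; passage to the limit then uses Assumption \ref{NemytskiiAssumption} together with the strong $L^1([0,T]; \bm L^2)$ convergence of $\bm c^{(k)}$, the strong $L^2([0,T]; H^{1,2})$ convergence of $\phi^{(k)}$, and uniform $\bm L^\infty$ bounds via dominated convergence. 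The critical step is the singular drift $\int_0^t \langle \bm D \tfrac{\nabla \bm c^{(k)} \nabla^{\tau_k} \phi^{(k)}}{\phi^{(k)} + \epsilon}, \bm v \rangle \d s$. I would decompose $\nabla^{\tau_k} \phi^{(k)} = \nabla \phi^{(k)} + (\nabla^{\tau_k} \phi^{(k)} - \nabla \phi^{(k)})$; the error is supported on $\{|\nabla \phi^{(k)}| > \tau_k\}$, whose Lebesgue measure vanishes uniformly as $\tau_k \to \infty$ by the uniform $L^2$ bound on $\nabla \phi^{(k)}$, and its $L^2$ contribution vanishes by uniform integrability. Using in addition the strong convergence $\phi^{(k)} \to \phi^\epsilon$ in $L^2([0,T]; H^{1,2})$ and the uniform lower bound $\phi^{(k)} + \epsilon \geq \epsilon > 0$, dominated convergence shows that $\tfrac{\nabla^{\tau_k} \phi^{(k)}}{\phi^{(k)} + \epsilon}\, \bm v$ converges strongly to $\tfrac{\nabla \phi^\epsilon}{\phi^\epsilon + \epsilon}\, \bm v$ in $L^2([0,T]; \bm L^2)$. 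Pairing this against the weakly convergent $\nabla \bm c^{(k)}$ in $L^2([0,T]; \bm L^2)$ yields the desired limit.

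For the martingale term, the a.s.\ convergence $M^{(k)} \to M$ in $C([0,T]; (\bm H^{1,2}_b)^\ast)$, combined with the uniform $L^p(\Omega'; C^{0,\alpha}([0,T]; \bm L^2))$ bounds from Proposition \ref{FirstStationaryApriori}, allows me to pass via Vitali to the limit both in the martingale characterization $\E[\langle M_t - M_s, \bm v \rangle \mid \mathcal F'_s] = 0$ and in the identity for the quadratic variation, identifying the covariation as $\int_0^\cdot \bm b_{\bm \eta}(\phi^\epsilon, \bm c^\epsilon)\, Q\, \bm b_{\bm \eta}^\ast(\phi^\epsilon, \bm c^\epsilon) \d s$. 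A standard martingale representation theorem then realizes $M = \int_0^\cdot \bm b_{\bm \eta}(\phi^\epsilon, \bm c^\epsilon) \d \bm W$ for a cylindrical $Q$-Wiener process $\bm W$, after possibly enlarging $(\Omega', \mathcal F', \mathbb P')$. The confinement $\bm c^\epsilon_t \in \mathcal X_{\bm c}$ passes to the limit since $\mathcal X_{\bm c}$ is closed under a.s.\ $\bm L^1$-convergence, and strong continuity $\bm c^\epsilon \in C([0,T]; \bm L^2)$ is recovered a posteriori from the classical variational framework in \cite{LiuRoeckner2015} applied to \eqref{MoreLessTruncStationaryEquation}, whose coefficients are Lipschitz in $\bm L^2$ uniformly thanks to $\phi^\epsilon + \epsilon \geq \epsilon$. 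I expect the singular drift limit, where only weak convergence of $\nabla \bm c^{(k)}$ is available so that strong convergence of the weight must compensate simultaneously for the removal of the gradient truncation, to be the main technical obstacle.
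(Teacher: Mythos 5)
Most of your limit passage matches the paper's own proof: the weak--strong pairing for the singular drift (weak convergence of $\nabla \bm c^{(k)}$ against strong $L^2$ convergence of $\tfrac{\nabla^{\tau_k}\phi^{(k)}}{\phi^{(k)}+\epsilon}\,\bm v$, with the truncation error killed by the uniform $L^2$ bound on $\nabla\phi^{(k)}$ and uniform integrability), the removal of the $\tilde{\bm f},\tilde{\bm b}_{\bm\eta}$ truncations via the invariance $\bm c^{(k)}_t\in\mathcal X_{\bm c}$, and the identification of $M$ as a martingale with the stated covariation via uniform moment bounds and BDG/Vitali. Your extra martingale-representation step is harmless but not needed, since the martingale problem here is formulated in terms of a martingale with prescribed covariation.

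The genuine gap is your final continuity step, and with it the claim that the identity holds for \emph{all} $t\in[0,T]$. You propose to recover $\bm c^\epsilon\in C([0,T];\bm L^2)$ by applying the classical variational framework of \cite{LiuRoeckner2015} to \eqref{MoreLessTruncStationaryEquation}, arguing its coefficients are "Lipschitz in $\bm L^2$ uniformly thanks to $\phi^\epsilon+\epsilon\geq\epsilon$". This fails: the lower bound on the denominator does not tame the numerator, and $\nabla\phi^\epsilon$ is only in $L^2([0,T];L^2)$, not bounded. Consequently the drift term $\tfrac{\nabla\bm c\,\nabla\phi^\epsilon}{\phi^\epsilon+\epsilon}$ can only be paired against $\bm H^{1,2}_b$ test functions and is controlled merely in $L^1([0,T];(\bm H^{1,2}_b)^\ast)$ (this is exactly why Proposition \ref{FirstStationaryApriori} only yields a $W^{\alpha,1}([0,T];(\bm H^{1,2}_b)^\ast)$ bound and why the truncation $\nabla^\tau$ was introduced at the approximation stage); the hypotheses of Theorem 5.1.3 in \cite{LiuRoeckner2015} in the Gelfand triple $\bm H^{1,2}\hookrightarrow\bm L^2\hookrightarrow(\bm H^{1,2})^\ast$ are not satisfied for the limit equation, and there is no well-posedness result for it that you could use to identify $\bm c^\epsilon$ with a continuous solution. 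Relatedly, your "fix $t$ and pass to the limit" step only gives the variational identity for $\d t$-a.e.\ $t$ (the Skorokhod convergence of $\bm c^{(k)}$ is in $L^1([0,T];\bm L^2)$, so pointwise-in-$t$ convergence of $\langle\bm c^{(k)}_t,\bm v\rangle$ is not available a priori). The paper closes both gaps by an a-posteriori argument: the right-hand side of the limit identity converges for every $t$, the uniform $\bm L^\infty$ bounds and density of $\bm H^{1,2}_b\subset\bm L^2$ upgrade this to weak $\bm L^2$ convergence of $\bm c^{(k)}_t$ for all $t$, the weak limit lies in $\mathcal X_{\bm c}$, is weakly continuous by absolute continuity of the right-hand side, agrees with $\bm c^\epsilon$ $\d t$-a.e., and strong $\bm L^2$-continuity then follows from the tailor-made It\^o formula of Lemma \ref{MySquareIto} (weak continuity plus continuity of $t\mapsto\norm{\bm c^\epsilon_t}_{\bm L^2}$). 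Without some version of this argument your proof does not deliver the asserted $C([0,T];\bm L^2)$ regularity nor the identity for every $t$.
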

\begin{proof} First note that almost sure convergence of $\bm c^{(k)}$ in $L^1([0,T];{L}^2)$ also implies the convergence of $$(\bm c^{(k)})_k \subset L^1(\Omega'; L^1([0,T];{L}^1))$$by the Lebesgue DCT. The reason is the $\Omega \times [0,t]$-uniform boundedness $\bm c^{(k)} \in \mathcal X_{\bm c}$ for all $t$, almost surely. As a consequence, we can choose a subsequence $(k_m)_{m \geq 1}$ such that $\bm c^{(k_m)} \to \bm c^\epsilon$ almost surely on $\Omega' \times [0,T]\times \mathbb T^d$. Similarly, the sequences $(\phi^{(k)})_{k \geq 1}$, $(\nabla \phi^{(k)})_{k \geq 1}$ are identically distributed and therefore uniformly integrable, so we can apply the same reasoning and find $L^2(\Omega' \times [0,T]\times  \mathbb T^d)$- and $\d \mathbb P'\otimes \d t \otimes \d {\bm x}$-a.s. convergent subsequences. 
To simplify notation, denote this subsequence again by $k \geq 1$.  

These convergences in particular imply that $\bm c^\epsilon_t \in \mathcal X_{\bm c}$ on a set of $\d \mathbb P' \otimes \d t$ full measure and that $\phi^\epsilon \sim \mu_\phi$, since $\mathbb E \left[f(\phi^\epsilon)\right] = \mathbb E\left[f(\phi^{(1)})\right]$ for all continuous, bounded functions with domain $W^{1,1}([0,T];L^1) \cap L^2([0,T];H^{1,2})$. Further, $\partial_t \phi^\epsilon = \lim_{n \to \infty} \partial_s \phi^{(k)}$ and $\bm c^\epsilon_0 \sim \mu_{\bm c_0}$ follows by similar arguments.

By virtue of the convergences we obtained, we arrive at the relationships
$$
\begin{tikzcd}[column sep = tiny]
\langle \bm c^{(k)}_t, \bm v \rangle \arrow[d, "\mathbb P' \otimes \d t -\text{a.s.}"] \arrow[r,equals]& \langle \bm c^{(k)}_0, \bm v\rangle
+ \int_0^t -\langle \bm D \nabla \bm c^{(k)}_s, \nabla \bm v \rangle
+ \left \langle \bm D\frac{ \nabla \bm c^{(k)}_s\nabla \phi^{(k)}_s}{\phi^{(k)}_s+\epsilon}, \bm v \right \rangle
+ \langle  f(\phi^{(k)}_s, \bm c^{(k)}_s), \bm v \rangle \, \mathrm ds
+ \langle M^{(k)}_t, \bm v\rangle \arrow[d, "\mathbb P'-\text{a.s.}"] \\
\langle \bm c^{\epsilon}_t, \bm v \rangle  \arrow[r,equals,shorten <= 1.3em, shorten >= 1.3em,"\mathbb P' \otimes \d t -\text{a.s.}"] & \langle \bm c^{\epsilon}_0, \bm v\rangle
+ \int_0^t- \langle \bm D\nabla \bm c^{\epsilon}_s, \nabla \bm v \rangle
+ \left \langle \bm D\frac{ \nabla \bm c^{\epsilon}_s\nabla \phi^{\epsilon}_s}{\phi^{\epsilon}_s+\epsilon}, \bm v \right \rangle
+ \langle  f(\phi^{\epsilon}_s, \bm c^{\epsilon}_s), \bm v \rangle \, \mathrm ds
+ \langle M_t, \bm v\rangle
\end{tikzcd}
$$
Convergence of the second term inside the integral in particular is a result of the weak convergence $\nabla \bm c^{(k)} \to \nabla \bm c^\epsilon$ and the strong convergence $$\norm{\left(\frac{\nabla^{\tau_k} \phi^{(k)}}{\phi^{(k)}+\epsilon}-\frac{\nabla \phi^{\epsilon}}{\phi^{\epsilon}+\epsilon}\right)\otimes \bm v}_{ L^2([0,T];{L}^2)} \leq \norm{\bm v}_{{L}^\infty} \norm{\left(\frac{\nabla^{\tau_k} \phi^{(k)}}{\phi^{(k)}+\epsilon}-\frac{\nabla \phi^{\epsilon}}{\phi^{\epsilon}+\epsilon}\right)}_{ L^2([0,T];{L}^2)}\to 0,$$ on a suitably chosen subsequence which does not depend on $\omega \in \Omega'$. The existence of such a subsequence follows from $L^2(\Omega'\times [0,T] \times \mathbb T^d)$-convergence of $\frac{\nabla^{\tau_k} \phi^{(k)}}{\phi^{(k)}+\epsilon}$, which is a consequence of $\d \mathbb P' \otimes\d t\otimes \d {\bm x}$-almost sure convergence and uniform integrability of identically distributed random variables.

In other words, we see that \begin{enumerate}
    \item[(i)] For almost every $\omega \in \Omega'$, there exists a $\!\d t$-null set $\mathcal N_\omega \subset [0,T]$ such that \begin{equation} \label{StationaryApproxVariationalIdentity}
    \langle \bm c^\epsilon_t, \bm v \rangle = \langle \bm c^\epsilon_0, \bm v\rangle + \int_0^t -\langle \bm D\nabla \bm c^\epsilon_s, \nabla \bm v \rangle + \left  \langle \bm D\frac{ \nabla \bm c^\epsilon_s\nabla \phi^\epsilon_s}{\phi^\epsilon_s+\epsilon}, \bm v \right \rangle + \langle  f(\phi^\epsilon_s, \bm c^\epsilon_s), \bm v \rangle \d s + \langle M_t, \bm v\rangle,
    \end{equation} for all $\bm v \in {H}^{1,2}_b$ and $t \notin  \mathcal N_\omega$. 
    \item[(ii)] $\mathbb P'$-almost surely, $\langle \bm c^{(k)}_t, \bm v \rangle$ converges to the right hand side of \eqref{StationaryApproxVariationalIdentity} for all $t\in [0,T]$ and $ v \in {H}^{1,2}_b$.
\end{enumerate}
Since ${H}^{1,2}_b \subset {L}^2$ is dense and $\bm c^{(k)}_t \in \mathcal X_{\bm c} \subset {L}^\infty$, conclusion (ii) can be strengthened to weak convergence in ${L}^2$ for all $t \in [0,T]$, $\mathbb P'$-almost surely. Denote this limit by $\tilde{\bm c}_t$. Since $\mathcal X_{\bm c}$ is convex and closed under strong convergence, it is closed under weak convergence, and we find that $\tilde{\bm c}_t \in \mathcal X_{\bm c}$ for all $t \in [0,T]$, almost surely. Combined with continuity of the right hand side of \eqref{StationaryApproxVariationalIdentity}, uniform boundedness of $\tilde{\bm c}_t$ additionally implies that $\tilde{\bm c}_t$ is weakly continuous. After demonstrating the martingale property of $M_t$, application of Lemma \ref{MySquareIto} actually yields strong continuity of $\bm c^\epsilon \in {L}^2$, since $\bm c$ is weakly continuous and $t \mapsto \norm{\bm c^\epsilon_t}_{{L}^2}$ is continuous.

Since, on the other hand, $\bm c^{(k)}_t$ converges strongly to $\bm c^\epsilon_t$ for $t \notin \mathcal N_\omega$, we find that $\tilde{\bm c} \equiv \bm c^\epsilon$ in $L^1([0,T];{L}^2)$. From hereon, identify $\tilde{\bm c}$ with $\bm c^\epsilon$. Altogether, this then shows that $\bm c^\epsilon$ satisfies equation \eqref{StationaryApproxVariationalIdentity} for all $t\in [0,T]$.

We now demonstrate the claimed properties of $M$.
\begin{enumerate}
    \item[(Adaptedness)] 
    By rearranging \eqref{StationaryApproxVariationalIdentity}, we can identify $\langle M_t, \bm v\rangle$ as a measurable function of $\bm c^\epsilon_0$ and $$ \phi^\epsilon|_{[0,t]},\bm c^\epsilon|_{[0,t]}, \bm c^\epsilon_t \in X_t \coloneqq L^2([0,t];H^{1,2}) \times L^2_w([0,t];{H}^{1,2}) \cap L^2([0,t];{L}^2) \times  {L}^2_w$$ for arbitrary $v \in {H}^{1,2}_b$, $t > 0$. Thus $\langle M_t,\bm v \rangle$ is adapted to $\mathcal F_t = \sigma(\phi^\epsilon|_{[0,s]},\bm c^\epsilon|_{[0,s]}, \bm c^\epsilon_s\,; s \leq t)$.
    \item[(Martingale)] 
    First, note that \begin{equation}\label{MartingaleBoundedness}
    M \in L^\infty([0,T];{L}^2) \cap C([0,T];({H}^{1,2})^\ast)
    \end{equation} implies that $M$ has a weakly continuous representant $\widetilde M_t$ with values in ${L}^2$. Now, density of ${H}^{1,2}_b \subset {L}^2$ implies adaptedness  of $\widetilde M_t \in {L}^2$ to $\mathcal F_t$.
    
    To prove that $\widetilde M$ is not only adapted, but a martingale, we aim to show that $$\begin{aligned}
    &\mathbb E' \left[\langle \widetilde M_t-\widetilde M_s, \bm v \rangle \psi(\phi^\epsilon|_{[0,s]},\bm c^\epsilon|_{[0,s]}, \bm c^\epsilon_s) \right] \\&= \lim_{k \to \infty} \mathbb E' \left[\langle M^{(k)}_t-M^{(k)}_s, \bm v \rangle \psi(\phi^{(k)}|_{[0,s]}, \bm c^{(k)}|_{[0,s]},\bm c^{(k)}_s) \right]  = 0
    \end{aligned}$$ for arbitrary $t>s \geq 0$, $\bm v \in {L}^2$ and bounded continuous function   $\psi \colon  X_s\rightarrow \mathbb R$. This case extends directly to the case of a bounded cylindrical function $\psi$ and thereby proves that the conditional expectation with respect to $\mathcal F_s$ vanishes.
    
    Observe that $(\langle M^{(k)}, \bm v\rangle)_{k \geq 1}$ is a sequence of real valued martingales with quadratic variation 
    $$
    \int_0^T \norm{\sqrt{Q} b^\ast(\phi^{(k)}_s, \bm c^{(k)}_s)\bm v}^2_{{H}^{r,2}} \leq \const T \norm{\sqrt Q}^2_{HS} \norm{v}^2_{{L}^2}
    $$ 
    uniformly bounded in $k$, by $\d \mathbb P' \otimes \d t \otimes \d t$-boundedness of $\phi$ and $\bm c$. 
    By the BDG inequality, $L^p
    $-boundedness of the sequence follows for any $p > 1$. Thus, if we can show almost sure convergence for $\bm v \in {L}^2$, we obtain the martingale property. But the fact that 
    $$\langle M^{(k)},\bm v \rangle \overset{C([0,T])}{\to} \langle \widetilde M,\bm v \rangle, ~ \mathbb P'\text{-a.s.}
    $$ already follows as an implication of uniform convergence of $(M^{(k)})_{k \geq 1} \subset C([0,T];\bm ({H}^{1,2})^\ast)$ together with uniform boundedness of this sequence in $C([0,T];{L}^2)$. 
    We can conclude that this convergence holds in 
    $L^p(\Omega';C([0,T]))$ for any $p > 1$ and thus demonstrate the claim.
    \item[(Covariance)] Analogously (cf. \cite{Flandoli1995MartingaleAS}), the quadratic variation of the continuous martingale $\langle \widetilde M,\bm v \rangle$ can be identified by dominated convergence, where we utilise $\d \mathbb P' \otimes \d t \otimes \d {\bm x}$-almost sure convergence of $\bm c^{(k)}$ and $\phi^{(k)}$.
    \item[(Continuity)] 
    We need to ensure that $\widetilde M$ is a continuous, ${L}^2$-valued martingale. If we can show that for some basis $(\bm e_k)_{k \geq 1}$ of ${L}^2$,
    \begin{equation} \label{MartingaleInftyBound}
        \sum_{k \geq 1} \sup_{0 \leq t \leq T} \langle \widetilde M_t, \bm e_k\rangle^2 < \infty,
        \end{equation} then continuity of $t \mapsto \norm{\widetilde M_t}_{L^2}$ follows by the Lebesgue DCT and thereby, $\widetilde M$ is continuous in ${L}^2$. It is left to show that \eqref{MartingaleInftyBound} holds. But by the BDG inequality, we find that $$\begin{aligned}
        \mathbb E'\left[\sum_{k \geq 1} \sup_{0 \leq t \leq T} \langle \widetilde M_t, \bm e_k\rangle^2\right] = \sum_{k \geq 1} \mathbb E' \left[\sup_{0 \leq t \leq T} \langle \widetilde M_t, \bm e_k\rangle^2\right] &\leq \const \sum_{k \geq 1} \mathbb E'\left[\int_0^T \norm{\sqrt{Q} b^\ast(\phi^\epsilon_s, \bm c^\epsilon_s)\bm e_k}^2_{{L}^2} \d s\right] \\ &= C_{\themycounter}\,\mathbb E' \left[ \int_0^T \norm{\sqrt{Q} b^\ast(\phi^\epsilon_s, \bm c^\epsilon_s)}^2_{HS({L}^2)}\d s\right] < \infty.
        \end{aligned}$$
\end{enumerate}
Altogether, we can conclude that $\widetilde M$ is a square-integrable, continuous ${L}^2$-valued martingale with quadratic variation process $\int_0^\cdot b(\phi^\epsilon_s, \bm c^\epsilon_s) Qb^\ast(\phi^\epsilon_s, \bm c^\epsilon_s)\d s$ such that $\langle \widetilde M, \bm v\rangle = \langle M, \bm v \rangle$ for all $\bm v \in {H}^{1,2}_b$.
\end{proof}

We now repeat the previous compactness argument with the sequence of weighted processes $$((\phi^\epsilon+\epsilon)c^\epsilon)_{\epsilon > 0}.$$ Due to the low integrability of $\nabla \phi^\epsilon \nabla c^\epsilon$, these processes do not fulfill the conditions of standard identities, which is why we reprove them in the specific setting we encounter here. This is the content of the next two lemmata.
\begin{lemma} \label{MyProdIto}
Let a filtered probability space $(\Omega, \mathcal F, (\mathcal F_t)_{t \geq 0}, \mathbb P)$ and adapted processes $$x_s, y_s \in L^2(\Omega;L^2([0,T];H^{1,2})) \cap L^\infty(\Omega;L^\infty([0,T];L^\infty))$$ be given. Moreover, assume that there exist $$u_s, v_s \in L^2(\Omega;L^2([0,T];\bm (H^{1,2})^\ast)), ~\tilde u_s, \tilde v_s \in L^1(\Omega;L^1([0,T];L^1)),$$ such that almost surely, $$\langle x_t, w \rangle = \langle x_0, w \rangle + \int_0^t \langle u_s+ \tilde u_s, w\rangle \d s + \langle w, M_t \rangle$$ and 
$$\partial_s \langle y_s,w\rangle = \langle v_s+ \tilde v_s, w\rangle$$ for any $w \in H^{1,2}_b$. Here $M$ is assumed to be a continuous square-integrable ${L}^2$-valued martingale with respect to $(\mathcal F_t)_{t \geq 0}$ whose covariation is given by $\int_0^t g_s Qg^\ast_s\d s$, for some $g \in L^\infty([0,T];L_2(U,L^2))$. Then these processes are weakly continuous in $L^2$, and their product $$x_t y_t \in L^2(\Omega;L^2([0,T];H^{1,2})) \cap L^\infty(\Omega;L^\infty([0,T];L^\infty))$$ is weakly continuous such that $$\begin{aligned}
\langle x_ty_t,w\rangle = \langle x_0y_0,w\rangle + \int_0^t \Big(\langle u_s+\tilde u_s,y_s w\rangle +\langle v_s+ \tilde v_s,x_s w\rangle\Big)\d s + \langle w, \int_0^t y_s \d M_s \rangle,
\end{aligned}$$
for any $w \in H^{1,2}_b$ and $t \in [0,T]$, $\P$-almost surely.
\end{lemma}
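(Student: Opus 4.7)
My approach is to regularise $x$ and $y$ spatially by convolution, apply the classical It\^o product formula to the smooth approximants, and then pass to the limit. The principal obstacle is that the drifts $\tilde u_s, \tilde v_s$ lie only in $L^1([0,T];L^1)$, hence not in $L^2([0,T];(H^{1,2})^\ast)$, so the variational It\^o formula of Krylov--Rozovskii (as used in Theorem 5.1.3 of \cite{LiuRoeckner2015}) is not directly available. The uniform $L^\infty$-bounds on $x$ and $y$ are the essential technical lever: they are what gives meaning to the pairings $\langle \tilde u_s, y_s w\rangle$ and $\langle \tilde v_s, x_s w\rangle$ (via $L^\infty \leftrightarrow L^1$ duality rather than via a Gelfand triple), and they also carry us through the limit passage.

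First I would verify the asserted weak $L^2$-continuity of $x$ and $y$. For any $w \in H^{1,2}_b$, the pairing $\langle u_s + \tilde u_s, w\rangle$ is in $L^1([0,T])$ almost surely (using $\tilde u_s \in L^1$ tested against $w \in L^\infty$), while $t \mapsto \langle M_t, w\rangle$ is continuous by the martingale hypothesis. Hence $\langle x_t, w\rangle$ is continuous in $t$, and combined with the uniform $L^2$-bound and density of $H^{1,2}_b$ in $L^2$ this yields weak $L^2$-continuity; an analogous (simpler) argument applies to $y$.

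Next, let $\rho_\delta$ be a family of symmetric spatial mollifiers on $\mathbb T^n$ and define $x^\delta_s := \rho_\delta \ast x_s$, $y^\delta_s := \rho_\delta \ast y_s$, and $M^\delta_t := \rho_\delta \ast M_t$. These processes are spatially smooth and uniformly $L^\infty$-bounded, and they satisfy the mollified evolution equations with drifts $(u_s + \tilde u_s) \ast \rho_\delta$ and $(v_s + \tilde v_s) \ast \rho_\delta$, which are now smooth and in particular in $(H^{1,2})^\ast$. Since $y^\delta$ carries no martingale part, the classical real-valued It\^o formula applied to $t \mapsto \langle x^\delta_t y^\delta_t, w\rangle$ yields
\begin{equation*}
\langle x^\delta_t y^\delta_t, w\rangle = \langle x^\delta_0 y^\delta_0, w\rangle + \int_0^t \bigl(\langle (u_s + \tilde u_s) \ast \rho_\delta, y^\delta_s w\rangle + \langle (v_s + \tilde v_s) \ast \rho_\delta, x^\delta_s w\rangle\bigr)\,\d s + \Bigl\langle w, \int_0^t y^\delta_s\,\d M^\delta_s\Bigr\rangle,
\end{equation*}
with no covariation term.

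It remains to pass $\delta \to 0$. The left-hand side converges pointwise in $t$ by weak $L^2$-continuity combined with strong $L^2$-convergence $x^\delta_t y^\delta_t \to x_t y_t$. For the pairings involving $u_s, v_s \in L^2((H^{1,2})^\ast)$ I use the weak-strong convergence $u_s \ast \rho_\delta \to u_s$ in $L^2((H^{1,2})^\ast)$ against strong convergence $y^\delta_s w \to y_s w$ in $L^2(H^{1,2})$. The delicate terms are the $L^1$ pairings; splitting
\begin{equation*}
\langle \tilde v_s \ast \rho_\delta, x^\delta_s w\rangle - \langle \tilde v_s, x_s w\rangle = \langle \tilde v_s \ast \rho_\delta - \tilde v_s, x^\delta_s w\rangle + \langle \tilde v_s, (x^\delta_s - x_s) w\rangle,
\end{equation*}
the first summand is bounded by $\|\tilde v_s \ast \rho_\delta - \tilde v_s\|_{L^1}\|x w\|_{L^\infty}$ and tends to zero in $L^1_t$, while the second vanishes by dominated convergence with dominating function $2\|x w\|_{L^\infty}|\tilde v_s|$ (here is where boundedness of $x$ in $L^\infty$ is indispensable). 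Convergence of the stochastic integral follows from It\^o's isometry applied to the difference of integrands, together with $y^\delta_s w \to y_s w$ in $L^2(\Omega\times[0,T]\times\mathbb T^n)$, verified by an analogous splitting. Altogether the identity persists in the limit, yielding the claimed formula; strong continuity of $x_t y_t$ in $L^2$ would follow a posteriori from weak continuity together with continuity of $t \mapsto \|x_t y_t\|_{L^2}$, as in the argument at the end of Proposition \ref{StationaryApproximation}.
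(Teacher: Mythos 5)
Your proposal is correct in its essentials, but it takes a genuinely different route from the paper. The paper discretizes in \emph{time}: it approximates $x$ and $y$ by step functions sampled at right resp.\ left endpoints of a partition (via Lemma 4.2.6 in \cite{LiuRoeckner2015}), writes $\langle x_ty_t-x_0y_0,w\rangle$ as the telescoping sum $\sum_i \langle x_{t_{i+1}}-x_{t_i}, y_{t_i}w\rangle+\langle y_{t_{i+1}}-y_{t_i}, x_{t_{i+1}}w\rangle$ — the asymmetric sampling both kills any cross-variation and keeps the stochastic integrand adapted — inserts the assumed variational identities on each subinterval, and passes to the limit using the $L^\infty$ bounds with dominated convergence for the $L^1$ drift terms and BDG for the martingale part. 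You instead regularise in \emph{space}, apply the product rule to smooth approximants, and remove the mollification, with the same $L^\infty$ leverage carrying the $\tilde u,\tilde v$ terms and the It\^o isometry carrying the stochastic integral; this trades the partition bookkeeping for a mollification bookkeeping, and your limit arguments (the $L^1$/$L^\infty$ splittings, the $(H^{1,2})^\ast$--$H^{1,2}$ pairing, the isometry bound) are all fine. The one step you should make explicit is the invocation of the ``classical real-valued It\^o formula'' for $t\mapsto\langle x^\delta_t y^\delta_t,w\rangle$: this quantity is a spatial integral of products of semimartingales, not a function of finitely many real ones, so you must either apply the scalar product rule pointwise in $z$ (testing the hypotheses against $\rho_\delta(z-\cdot)\in H^{1,2}_b$) and then interchange the $\d x$-integral with the stochastic integral by a stochastic Fubini theorem, or apply a Hilbert-space It\^o formula to the continuous bilinear functional $(a,b)\mapsto\langle ab,w\rangle$ on $L^2\times L^2$; both are legitimate here precisely because after mollification the drift is Bochner integrable in $L^2$ and $y^\delta$ has bounded variation, so the covariation term indeed vanishes. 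Two further small points: in the stochastic term the symmetric mollifier should be moved onto the integrand, $\langle w,\int_0^t y^\delta_s\,\d M^\delta_s\rangle=\int_0^t\langle \rho_\delta\ast(y^\delta_s w),\d M_s\rangle$, before applying the isometry; and to get the identity for all $t\in[0,T]$ on one null set you should upgrade the isometry estimate to a Doob/BDG bound and pass along a subsequence, exactly as the paper does for its Riemann-sum approximation.
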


\begin{lemma} \label{MySquareIto}
Consider the setting of the previous lemma. It then holds that \begin{equation}
\begin{aligned}
\norm{x_t}^2_{L^2} &= \norm{x_0}^2_{L^2} + \int_0^t 2\langle u_s + \tilde u_s, x_s \rangle \d s + \int_0^t 2\langle x_s, \cdot \rangle \d M_s +\int_0^t \norm{g_s \circ \sqrt Q}^2_{HS} \d s,
\end{aligned}
\end{equation}
for any $t \in [0,T]$, $\P$-almost surely.
\end{lemma}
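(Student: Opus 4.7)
The plan is to smooth $x_s$ in the spatial variable, apply the classical Itô formula for $\norm{\cdot}_{L^2}^2$ to the regularized process, and pass to the limit. Let $P_\varepsilon = e^{\varepsilon \Delta}$ denote the heat semigroup on $\mathbb{T}^n$, which is self-adjoint on $L^2$, contractive on every $L^p(\mathbb{T}^n)$ with $p \in [1,\infty]$, and maps $L^2$ into $H^{s,2} \cap L^\infty$ for all $s > 0$. In particular $P_\varepsilon \tilde w \in H^{1,2}_b$ for any $\tilde w \in L^2$, so testing the given equation against $P_\varepsilon \tilde w$ and using self-adjointness shows that $x^\varepsilon_t \coloneqq P_\varepsilon x_t$ satisfies
\begin{equation*}
\langle x^\varepsilon_t, \tilde w\rangle = \langle x^\varepsilon_0, \tilde w\rangle + \int_0^t \langle P_\varepsilon(u_s + \tilde u_s), \tilde w\rangle \d s + \langle \tilde w, P_\varepsilon M_t\rangle
\end{equation*}
for every $\tilde w \in L^2$, with drift in $L^2([0,T]; L^\infty)$ and $P_\varepsilon M$ a continuous square-integrable $L^2$-valued martingale of covariation $\int_0^\cdot (P_\varepsilon g_s)Q(P_\varepsilon g_s)^\ast \d s$. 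Since every ingredient is now $L^2$-valued, the classical infinite-dimensional Itô formula for $\norm{\cdot}_{L^2}^2$ (applied via the eigenbasis of $\Delta$ and monotone approximation together with the BDG inequality) yields, after a second self-adjointness step,
\begin{equation*}
\norm{x^\varepsilon_t}_{L^2}^2 = \norm{x^\varepsilon_0}_{L^2}^2 + 2\int_0^t \langle u_s + \tilde u_s, P_{2\varepsilon} x_s\rangle \d s + 2\int_0^t \langle P_{2\varepsilon} x_s, \cdot\rangle \d M_s + \int_0^t \norm{P_\varepsilon g_s \circ \sqrt Q}_{HS}^2 \d s.
\end{equation*}

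I then pass $\varepsilon \downarrow 0$ in each term. The $\norm{x^\varepsilon_t}_{L^2}^2$ terms and the quadratic variation converge by dominated convergence using strong continuity of $P_\varepsilon$ at $\varepsilon = 0$ on $L^2$ and on Hilbert-Schmidt operators, respectively. The $u$-term converges because $P_{2\varepsilon} x_s \to x_s$ in $H^{1,2}$ for a.e. $s$ and $|\langle u_s, P_{2\varepsilon} x_s\rangle| \leq \norm{u_s}_{(H^{1,2})^\ast}\norm{x_s}_{H^{1,2}} \in L^1([0,T])$. The stochastic integral converges in $L^2(\Omega)$ by Itô's isometry: the integrand error $\langle P_{2\varepsilon} x_s - x_s, \cdot\rangle$ has covariation bounded by $\norm{g_s \sqrt Q}_{HS}^2 \norm{P_{2\varepsilon} x_s - x_s}_{L^2}^2$, which vanishes pointwise in $(s,\omega)$ and is dominated by an integrable function.

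The main obstacle is the $\tilde u$-term, which lies outside the standard variational framework since $\tilde u_s$ is merely in $L^1$, not $(H^{1,2})^\ast$. Here I exploit the essential boundedness of $x$: uniformly in $\varepsilon$, $\norm{P_{2\varepsilon} x_s}_{L^\infty} \leq \norm{x_s}_{L^\infty}$ and $P_{2\varepsilon} x_s \to x_s$ $\lambda$-a.e., so dominated convergence in space gives $\int_{\mathbb{T}^n} \tilde u_s P_{2\varepsilon} x_s \d x \to \int_{\mathbb{T}^n} \tilde u_s x_s \d x$, while the outer bound $\norm{\tilde u_s}_{L^1}\norm{x_s}_{L^\infty} \in L^1([0,T])$ justifies dominated convergence in time. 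This is precisely where the choice of mollification matters: any regularization failing to preserve $L^\infty$-bounds (e.g.\ Fourier truncation) would not suffice. Extracting a subsequence along which the stochastic integral converges almost surely, the identity holds for each $t \in [0,T]$ on a $\P$-full set; pathwise continuity of the right-hand side then extends it to all $t$ simultaneously and yields strong continuity of $t \mapsto \norm{x_t}_{L^2}^2$ as a by-product.
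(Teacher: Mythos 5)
Your argument is correct and reaches the same conclusion, but by a genuinely different regularization than the paper. The paper projects onto finite orthogonal systems $P_n$ in $H^{1,2}\cap L^\infty$ that are assumed uniformly stable in both $H^{1,2}$ and $L^\infty$, proves the identity for $P_nx$ by hand (polarization plus the scalar It\^o formula coordinatewise), and then passes $n\to\infty$, using the $L^\infty$-stability to handle the $L^1$-drift $\tilde u$ by dominated convergence. You instead mollify with the heat semigroup $P_\varepsilon=e^{\varepsilon\Delta}$, invoke the classical Hilbert-space It\^o formula for $\norm{\cdot}_{L^2}^2$ applied to the $L^2$-valued semimartingale $P_\varepsilon x$, and let $\varepsilon\downarrow 0$; the crucial common point, which you identify explicitly, is that the smoothing must contract $L^\infty$ so that $\langle \tilde u_s, P_{2\varepsilon}x_s\rangle$ can be dominated by $\norm{\tilde u_s}_{L^1}\norm{x_s}_{L^\infty}$. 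Your route buys something concrete: it sidesteps the construction of finite-dimensional \emph{orthogonal} projections that are uniformly $L^\infty$-bounded (plain Fourier truncation on $\mathbb T^n$ is not), at the price of citing the standard infinite-dimensional It\^o formula rather than re-deriving it, which is legitimate since $P_\varepsilon(u_s+\tilde u_s)$ is Bochner integrable in $L^2$ and $P_\varepsilon M$ is a continuous square-integrable $L^2$-martingale. Two small inaccuracies to fix: the regularized drift is only $L^1$ (not $L^2$) in time as an $L^\infty$- or $L^2$-valued function, which is harmless but should be stated correctly; and for the claim that the identity holds for all $t$ simultaneously (and hence continuity of $t\mapsto\norm{x_t}_{L^2}^2$) you should upgrade the fixed-$t$ It\^o-isometry convergence of the stochastic integrals to convergence in $L^2(\Omega;C([0,T]))$ via Doob/BDG, since a priori the left-hand side is only weakly continuous, so "extend by continuity of the right-hand side" alone only gives an inequality at exceptional times; for the statement as formulated (fixed $t$, $\P$-a.s.) your argument already suffices.
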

\begin{proposition} \label{stationaryApriori}
To improve readability, let $(\Omega',\mathcal F', \mathbb P')$ denote a probability space containing a family of solutions $(\bm c^{\epsilon})_{\epsilon > 0}$ of \eqref{MoreLessTruncStationaryEquation}. Then there exists a uniform bound on $$((\phi^\epsilon+\epsilon)\bm c^{\epsilon})_{\epsilon > 0} \subset L^2\left(\Omega'; L^2([0,T];{H}^{1,2})\right) \cap L^1\left(\Omega';W^{\alpha, 1}([0,T];({H}^{1,2}_b)^\ast)\right)$$ for arbitrary $\alpha \in (0, 1/2)$, and in particular, $$\sup_{\epsilon > 0}\mathbb E'\left[\int_0^T \norm{\sqrt{\phi^\epsilon_s+\epsilon} \bm D^{\frac 12}\nabla \bm c^\epsilon_s}^2_{{L}^2}\d t \right] < \infty.$$ Further, the sequence of unbounded variation parts $(\int_0^t (\phi^\epsilon_s+\epsilon) \d M^\epsilon_s)_{\epsilon > 0}$ of $(\phi^\epsilon+\epsilon)\bm c^\epsilon$ is uniformly bounded in $L^p(\Omega';C^{0,\alpha}([0,T];{L}^2))$ for any $\alpha \in (0,\frac12)$ and $p \in [2,\infty)$.
\end{proposition}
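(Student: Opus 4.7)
The plan is to exploit an algebraic cancellation hidden in the equation for the weighted process $\psi^\epsilon \bm c^\epsilon$, where $\psi^\epsilon := \phi^\epsilon + \epsilon$. First, I apply the product Itô formula (Lemma \ref{MyProdIto}) with $x = \bm c^\epsilon$, $y = \phi^\epsilon$ (so that $v_s \equiv 0$, $\tilde v_s = \partial_s \phi^\epsilon$), and use the identity $\psi^\epsilon \bm D \Delta \bm c^\epsilon + \bm D \nabla \bm c^\epsilon \nabla \phi^\epsilon = \nabla \cdot(\psi^\epsilon \bm D \nabla \bm c^\epsilon)$, valid because $\nabla \psi^\epsilon = \nabla \phi^\epsilon$. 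The singular term $\bm D \nabla \bm c^\epsilon \nabla \phi^\epsilon / \psi^\epsilon$ is thereby exactly absorbed, and one obtains
$$d(\psi^\epsilon \bm c^\epsilon) = \bigl(\bm c^\epsilon \partial_s \phi^\epsilon + \nabla \cdot(\psi^\epsilon \bm D \nabla \bm c^\epsilon) + \psi^\epsilon \bm f(\phi^\epsilon, \bm c^\epsilon)\bigr)\,ds + \psi^\epsilon \bm b_{\bm \eta}(\phi^\epsilon, \bm c^\epsilon)\,\d \bm W_s.$$
This identity has no singular coefficient and is the algebraic backbone of the entire argument.

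Next, I would apply Lemma \ref{MySquareIto} to $\|\psi^\epsilon \bm c^\epsilon\|^2_{\bm L^2}$. Using $\nabla(\psi^\epsilon \bm c^\epsilon) = \bm c^\epsilon \otimes \nabla \phi^\epsilon + \psi^\epsilon \nabla \bm c^\epsilon$, the dissipation term splits as
$$-2\langle \nabla(\psi^\epsilon \bm c^\epsilon),\,\psi^\epsilon \bm D \nabla \bm c^\epsilon\rangle = -2\|\psi^\epsilon \sqrt{\bm D}\nabla \bm c^\epsilon\|^2_{\bm L^2} - 2\langle \bm c^\epsilon \otimes \nabla \phi^\epsilon,\,\psi^\epsilon \bm D \nabla \bm c^\epsilon\rangle,$$
where Young's inequality absorbs the cross term into half of the main dissipation, leaving a residual bounded by $\|\bm c^\epsilon\|^2_{\bm L^\infty}\|\nabla \phi^\epsilon\|^2_{\bm L^2}$. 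The remaining drift contributions $\langle \psi^\epsilon \bm c^\epsilon, \bm c^\epsilon \partial_s \phi^\epsilon\rangle$, $\langle \psi^\epsilon \bm c^\epsilon, \psi^\epsilon \bm f\rangle$ and the Itô correction $\|\psi^\epsilon \bm b_{\bm \eta}\sqrt Q\|^2_{HS}$ are all uniformly controlled using the confinement $\bm c^\epsilon \in \mathcal X_{\bm c}$, $\phi^\epsilon \in \mathcal X_\phi$ granted by Proposition \ref{StationaryApproximation}, together with $\partial_s \phi^\epsilon \in L^1([0,T];L^1)$ and $\sqrt Q \in HS(\bm H^{r,2})$; BDG handles the stochastic integral. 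Taking expectations then yields $\mathbb E \int_0^T \|\psi^\epsilon \nabla \bm c^\epsilon\|^2_{\bm L^2}\,ds \le C$ uniformly in $\epsilon$. Since $\bm c^\epsilon \otimes \nabla \phi^\epsilon$ already lies in $L^2(\Omega';L^2([0,T];\bm L^2))$ with uniform bound, the full gradient $\nabla(\psi^\epsilon \bm c^\epsilon)$ is uniformly controlled in the same space; the $\bm L^2$ part of the $\bm H^{1,2}$-norm is trivial from the $\bm L^\infty$-bound, completing the $L^2(\Omega';L^2([0,T];\bm H^{1,2}))$ estimate.

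For the fractional time regularity, I would decompose $\psi^\epsilon \bm c^\epsilon - \psi^\epsilon_0 \bm c^\epsilon_0$ into its drift and martingale parts. The divergence term $\nabla \cdot(\psi^\epsilon \bm D \nabla \bm c^\epsilon)$ sits in $L^2(\Omega';L^2([0,T];(\bm H^{1,2})^\ast))$ by duality from the previous step, while $\bm c^\epsilon \partial_s \phi^\epsilon$ and $\psi^\epsilon \bm f$ sit in $L^1(\Omega';L^1([0,T];\bm L^1)) \hookrightarrow L^1(\Omega';L^1([0,T];(\bm H^{1,2}_b)^\ast))$ by the $\bm L^\infty$-bounds and the assumption on $\partial_s \phi$. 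Hence the drift integral has uniform $W^{1,1}([0,T];(\bm H^{1,2}_b)^\ast)$-regularity, which embeds into $W^{\alpha,1}$ for every $\alpha \in (0,1)$. The martingale $\tilde M^\epsilon_t := \int_0^t \psi^\epsilon_s \bm b_{\bm \eta}(\phi^\epsilon_s, \bm c^\epsilon_s)\,\d \bm W_s$ has $\bm L^2$-quadratic variation uniformly bounded in time by the truncation and the $\bm L^\infty$-bound on $\psi^\epsilon$, so BDG supplies $\mathbb E \|\tilde M^\epsilon_t - \tilde M^\epsilon_s\|^p_{\bm L^2} \le C|t-s|^{p/2}$ for every $p \ge 2$. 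A Kolmogorov continuity argument, taking $p$ sufficiently large, then delivers the uniform $L^p(\Omega';C^{0,\alpha}([0,T];\bm L^2))$-bound for every $\alpha \in (0,1/2)$ and $p \in [2,\infty)$.

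The principal difficulty is not any single estimate but the bookkeeping required to apply Lemma \ref{MyProdIto} in a setting where both $\partial_s \phi^\epsilon$ and the singular drift $\bm D \nabla \bm c^\epsilon \nabla \phi^\epsilon / \psi^\epsilon$ are merely $L^1$-integrable in space-time, and verifying that after the cancellation no $\epsilon^{-1}$ pre-factor survives so that all resulting bounds are genuinely uniform in $\epsilon$.
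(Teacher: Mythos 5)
Your proposal is correct and follows essentially the same route as the paper: the same cancellation yielding the variational identity for $(\phi^\epsilon+\epsilon)\bm c^\epsilon$ with the weighted stochastic integral as its martingale part, an It\^o energy estimate giving the uniform gradient bound, a drift/martingale splitting with the drift controlled in $W^{1,1}([0,T];(\bm H^{1,2}_b)^\ast)$, and a fractional-in-time estimate for the stochastic integral. The only cosmetic differences are that the paper applies the It\^o formula to the weighted norm $\|\sqrt{\phi^\epsilon+\epsilon}\,\bm c^\epsilon\|^2_{\bm L^2}$, so no cross term with $\nabla\phi^\epsilon$ arises and your Young-inequality step (which relies on the standing uniform bound $\nabla\phi\in L^2(\Omega;L^2([0,T];L^2))$) is unnecessary, and that the H\"older bound on the martingale part is obtained via the Flandoli--Gatarek $W^{\alpha,p}$ estimate together with the embedding $W^{\alpha,p}\hookrightarrow C^{0,\alpha-1/p}$ rather than by BDG increments plus Kolmogorov's criterion.
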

\begin{proof}
We first demonstrate the second claim. Per Proposition \ref{StationaryApproximation}, we know that the unbounded variation part of $(\phi^\epsilon+\epsilon)\bm c^\epsilon$, as an $({H}^{1,2}_b)^\ast$-valued process, is given by $\langle \cdot,\int_0^t(\phi^\epsilon+\epsilon)\d M^\epsilon_s\rangle$, for some continuous, square-integrable ${L}^2$-valued martingale $M^\epsilon$ with covariation $$\int_0^t  b(\phi^\epsilon_s, \bm c^\epsilon_s)Q b^\ast(\phi^\epsilon_s,\bm c^\epsilon_s)\d s.$$  We will prove existence of a uniform bounded in expectation of the $W^{\alpha, p}([0,T];{L}^2)$-norm for any $\alpha \in (0,\frac12)$ and $p \geq 2$, so that the embedding $$W^{\alpha, p}([0,T];{L}^2) \hookrightarrow C^{0,\alpha- \frac1p}([0,T];{L}^2), ~ p > \frac1\alpha$$ implies the claim. To this end, note that by an application of the martingale representation theorem and Lemma 2.1 in \cite{Flandoli1995MartingaleAS}, we know that for any $\alpha \in (0, \frac12)$ and $p \geq 2$, there exists a constant $C(\alpha, p)$ such that 
$$\mathbb E'\left[\norm{\int_0^\cdot h(s)  \d M^\epsilon_s}^p_{W^{\alpha, p}([0,T];{L}^2)}\right] \leq C(\alpha, p) \mathbb E '\left[\int_0^T \norm{ h(t)    b(\phi^\epsilon_t, \bm c^\epsilon_t)\sqrt Q}^p_{HS} \d t\right]
$$ for any progressively measurable process $h$ with values in the space of linear operators on ${L}^2$ \cite{Flandoli1995MartingaleAS}. It follows that 
$$\begin{aligned}
   \mathbb E'\left[\norm{\int_0^\cdot  (\phi^\epsilon_s+\epsilon)\d M^\epsilon_s}^p_{W^{\alpha,p}([0,T];{L}^2)}\right] \leq C(\alpha,p) \mathbb E'\left[\int_0^T \norm{(\phi^\epsilon_t+\epsilon) b(\phi^\epsilon_t, \bm c^\epsilon_t) \circ \sqrt{Q}}^p_{HS} \d t \right]
\end{aligned}$$ is bounded above uniformly in $\epsilon > 0$ by uniform boundedness of $\phi^\epsilon, \bm c^\epsilon\in L^\infty$ (cf. Remark \ref{rem:WeakSolDispCoeffBounded}). This demonstrates the second claim.

We now prove the first claim. By the previous lemmata, we can apply an Itô-formula to the functional $u \mapsto \int (\phi^\epsilon_s+\epsilon) |u|^2 \d {\bm x}$ and find that 
$$
\begin{aligned}
\norm{\sqrt{\phi^\epsilon_t+\epsilon} \bm c^\epsilon_t}^2_{{L}^2} = & \norm{\sqrt{\phi^\epsilon_0+\epsilon} \bm c^\epsilon_0}^2_{{L}^2}  + \int_0^t \Bigg( -2 \norm{\sqrt{\phi^\epsilon_s+\epsilon} \bm D^{\frac 12}\nabla \bm c^\epsilon_s}^2_{{L}^2} + 2\langle(\phi^\epsilon_s+\epsilon)  f(\phi^\epsilon_s, \bm c^\epsilon_s), \bm c^\epsilon_s \rangle
\\ &+\norm{ \sqrt{\phi^\epsilon_s+\epsilon}  b(\phi^\epsilon_s, \bm c^\epsilon_s) \circ \sqrt Q}^2_{HS} + \langle \partial \phi^\epsilon_s \bm c^\epsilon_s,\bm c^\epsilon_s\rangle\Bigg)\d s + 2\int_0^t \langle (\phi^\epsilon_s+\epsilon) \bm c^\epsilon_s,\d M^\epsilon_t \rangle .
\end{aligned}
$$
Note here in particular the cancellation due to the structure of the logarithmic correction term. By rearranging and using boundedness of $\bm c^\epsilon \in \mathcal X_{\bm c}$, $\phi^\epsilon \in \mathcal X_\phi$ and $\partial_s\phi^\epsilon \in L^1(\Omega;L^1([0,T];L^1))$, we almost surely find that
$$\begin{aligned}
&\int_0^T \norm{\sqrt{\phi^\epsilon_s+\epsilon} \bm D^{\frac 12}\nabla \bm c^\epsilon_t}^2_{{L}^2} \d t \leq \const (1+\mathrm{tr}\, Q) T + \int_0^T \langle (\phi^\epsilon_t+\epsilon) \bm c^\epsilon_t,  \d M^\epsilon_t \rangle.
\end{aligned}$$ Since the stochastic integral is almost surely finite and in particular centered, there exists some $\const > 0$ independent of $\epsilon > 0$ such that
$$\mathbb E'\left[\int_0^T \norm{\sqrt{\phi^\epsilon_s+\epsilon} \bm D^{\frac 12}\nabla \bm c^\epsilon_s}^2_{{L}^2}\d t \right] \leq C_{\themycounter}.$$ We now show boundedness in expectation of $(\phi^\epsilon+\epsilon)\bm c^\epsilon$ in the spaces $W^{\alpha,1}([0,T];({H}^{1,2}_b)^\ast)$. To this end, observe that for $\bm v \in {H}^{1,2}_b$, $$\begin{aligned}
\langle (\phi^\epsilon_t+\epsilon) \bm c^\epsilon_t, \bm v\rangle &= \langle (\phi^\epsilon_0+\epsilon) \bm c^\epsilon_0, \bm v\rangle + \int_0^t -\langle (\phi^\epsilon_s+\epsilon)\bm D\nabla \bm c^\epsilon_s, \nabla \bm v \rangle + \langle (\phi^\epsilon_s+\epsilon) f(\phi^\epsilon_s,\bm c^\epsilon_s),\bm v\rangle \d s \\ &\quad + \int_0^t \langle \partial_s\phi^\epsilon_s \bm c^\epsilon_s, \bm v \rangle \d s +\left \langle \bm v, \int_0^t \bm (\phi^\epsilon_s+\epsilon) \d M^\epsilon_s \right \rangle \\
&\coloneqq I^\epsilon_t(\bm v) + J^\epsilon_t(\bm v).
\end{aligned}$$
Then the preceding bound on $(\phi^\epsilon+\epsilon)\nabla \bm c^\epsilon_t$ and almost sure boundedness of $\phi^\epsilon$ and $\bm c^\epsilon$ yield that $$\begin{aligned}
   \sup_{\epsilon > 0} \mathbb E'\left[\norm{I^\epsilon}^2_{W^{\alpha,1}([0,T];({H}^{1,2}_b)^\ast)}\right] \leq 
   \sup_{\epsilon > 0}E'\left[\norm{I^\epsilon}^2_{W^{1,1}([0,T];({H}^{1,2}_b)^\ast)}\right] < \infty.
\end{aligned}$$
The previously derived estimate on $M^\epsilon$ concludes the proof.
\end{proof}

\begin{proof}[Proof of Theorem \ref{StationaryExistence}]
Again, we find a probability space $(\Omega', \mathcal F', \mathbb P')$, a subsequence $(n_k)_{k \in \mathbb N}$ and random variables $(\phi^{(k)},\partial_t \phi^{(k)},\bm X_0^{(k)}, \bm X^{(k)}, M^{(k)})_{k \geq 1}$, $(\phi,\partial_t \phi, \bm X_0,\bm X, M)$ such that $$\begin{aligned}
&(\phi^{(k)},\partial_t \phi^{(k)},\bm X_0^{(k)}, \bm X^{(k)}, M^{(k)})\\& \sim (\phi^{\epsilon_{n_k}},\partial_t \phi^{\epsilon_{n_k}}, (\phi^{\epsilon_{n_k}}_0+\epsilon_{n_k})\bm c^{\epsilon_{n_k}}_0,(\phi^{\epsilon_{n_k}} + \epsilon_{n_k})\bm c^{\epsilon_{n_k}},\int_0^\cdot (\phi^{\epsilon_{n_k}}_s+\epsilon_{n_k}) \d M^{\epsilon_{n_k}}_s)
\end{aligned}$$ and $\phi^{(k)}  \to \phi$, $\partial_s \phi^{(k)}  \to \partial_s \phi$, $\bm X^{(k)}_0 \to \bm X_0$, $\bm X^{(k)} \to \bm X$, $M^{(k)} \to M$, $\mathbb P'$-almost surely in $\mathcal A$ (cf. \eqref{CompactnessSpace}). To choose more suggestive notation, let $\bm c^{(k)} \coloneqq \frac{\bm X^{(k)}}{(\phi^{(k)} + \epsilon_{n_k})}$ and note that $\bm c^{(k)} \sim \bm c^{\epsilon_{n_k}}$. Let $$\bm c \coloneqq \mathds{1}_{\Set{\phi > 0}}\frac{\bm X}{\phi}.$$
We aim to prove that $(\phi, \bm c)$ is a solution to the martingale problem associated with equation \eqref{StationaryEquation} in the sense of Definition \ref{MartSolStationary}. To achieve this, we demonstrate that almost surely, \begin{equation} \label{StationaryVarIdentity}
\begin{aligned}
\langle \phi_t \bm c_t, \bm v \rangle &= \langle \phi_0 \bm c_0, \bm v \rangle - \int_0^t \langle \phi_s \bm D \nabla \bm c_s, \nabla \bm v \rangle \d s +  \int_0^t \langle \phi_s  f(\phi_s,\bm c_s),\bm v \rangle \d s &\\ &\quad + \int_0^t \langle \partial_s \phi_s \bm c_s, \bm v\rangle \d s+ \langle \bm v, M_t \rangle
\end{aligned}
\end{equation} for any $\bm v \in {H}^{1,2}_b$ and $t \in [0,T]$. Here, $$\nabla \bm c \coloneqq \mathds{1}_{\Set{\phi > 0}} \frac{\nabla \bm X - \bm c \otimes\nabla \phi}{\phi}$$ denotes a weak weighted gradient of the limit process $\bm c$. Subsequently, we show that $M \in {L}^2$ is a continuous, square integrable martingale with respect to the filtration generated by $(\phi|_{[0,t]},\bm c|_{[0,t]},\phi_t\bm c_t)_{t \in [0,T]}$ whose covariation is given by $$\int_0^t \phi_s b(\phi, \bm c_s)Q b^\ast(\phi,\bm c_s) \phi^\ast_s\d s.$$
Arguing as in the proof of Proposition \ref{StationaryApproximation}, we can infer $\mathbb P' \otimes \d t \otimes \d {\bm x}$-almost sure convergence $$(\phi^{(k)} + \epsilon_{n_k}) \bm c^{(k)} \to \bm X, ~\phi^{(k)} \to \phi, ~\nabla \phi^{(k)} \to \nabla \phi$$ of a suitably chosen subsequence, again denoted by $k$. This implies that $$\mathds{1}_{\Set{\phi > 0}} \bm c^{(k)} \to \mathds{1}_{\Set{\phi > 0}} \frac{\bm X}{\phi} = \bm c, ~\P' \otimes \d t \otimes \d {\bm x}\text{-a.s.}$$ and therefore $\bm c\in L^\infty(\Omega' \times [0,T] \times \mathbb T^d).$ 
We now identify the regularities and the variational equation satisfied by $\bm c$. To this end, we first observe that $\bm X^{(k)}$ converge in $L^2_w([0,T];{H}^{1,2})$, almost surely, and $\bm X^{(k)}, \mathds{1}_{\Set{\phi > 0}}\bm c^{(k)}$, $\phi^{(k)}$ and $\nabla \phi^{(k)}$ converge almost surely in $ \mathbb P'\otimes \d t \otimes \d {\bm x}$. This enables us to prove that for almost all $\omega \in \Omega'$, there exists a set $\mathcal N_\omega$ of full $\!\d t$-measure such that for any $\bm v \in {H}^{1,2}_b$ and $t \in \mathcal N_\omega$, it holds that $\bm c_t \in \mathcal X_{\bm c}$ and $$\begin{aligned}
\langle \phi_t \bm c_t, \bm v \rangle &= \langle \phi_0 \bm c_0, \bm v \rangle - \int_0^t \langle \bm D(\nabla \bm X_s -  \nabla \phi_s \otimes \bm c_s), \nabla \bm v \rangle \d s +  \int_0^t \langle \phi_s  f(\phi_s,\bm c_s),\bm v \rangle \d s \\ & \quad+\int_0^t \langle \partial_s \phi_s \bm c_s, \bm v \rangle \d s + \langle \bm v,M_t \rangle.
\end{aligned}$$ We obtain this equation by taking limits in the variational equation satisfied by $\bm X^{(k)}$ applied to $v \in {H}^{1,2}_b$. The bounded variation terms then converge since 
$$\int_0^t  \langle (\phi^{(k)}_s+\epsilon_{n_k}) \nabla \bm c^{(k)}_s, \nabla \bm v\rangle \d s = \int_0^t \langle \nabla \bm X^{(k)}_s - \bm c^{(k)}_s \otimes \nabla \phi^{(k)}_s  , \nabla \bm v \rangle \d s \to \int_0^t \langle \nabla \bm X_s - \bm c_s  \otimes \nabla \phi_s, \nabla \bm v \rangle \d s
$$ and 
$$\int_0^t \langle(\phi^{(k)}_s+\epsilon_{n_k})  f(\phi^{(k)}_s, \bm c^{(k)}_s), \bm v \rangle \d s \to \int_0^t \langle \phi_s  f(\phi_s, \bm c_s), \bm v \rangle \d s,
$$ for all $\bm v \in {H}^{1,2}$ and $t \in [0,T]$. Here, the almost sure convergence of the reaction term is a consequence of the reweighting by $(\phi^{(k)}+\epsilon_{n_k})$.

The properties of the limit process $M \in C([0,T];{L}^2)$ now follow as in the existence proof of Prop. \ref{StationaryApproximation}, where similarly we exploit the almost sure convergence of the reweighted multiplicative dispersion coefficients  $(\phi^{(k)}+\epsilon_{n_k})b(\phi^{(k)},\bm c^{(k)})$.

Continuity of the right hand side combined with uniform boundedness of $\phi_t\bm c_t \in {L}^2$ now shows that this equation holds, in fact, for all $t \in [0,T]$ and that $\phi \bm c$ is weakly continuous in ${L}^2$ - and per Lemma \ref{MySquareIto}, strongly continuous. It remains to be shown that \begin{equation} \label{WellDefinedDivision}
\nabla \bm X - \bm c \otimes\nabla \phi = \phi \cdot \mathds{1}_{\phi > 0} \frac{\nabla \bm X - \bm c \otimes \nabla \phi}{\phi}
\end{equation} and that $\nabla \bm c$ indeed defines a weighted weak derivative of $\bm c$ for almost all $t > 0$. The identity \eqref{WellDefinedDivision} quickly follows by the convergence 
$$\mathds{1}_{\Set{\phi = 0
}} \bm X = \lim_{k \to \infty} \mathds{1}_{\Set{\phi = 0
}} (\phi^{(k)} + \epsilon_{n_k})\bm c^{(k)} \equiv 0.$$ Thus, $\nabla \bm X, \nabla \phi \equiv 0$ on $\Set{\phi = 0}$, whence $\nabla \bm X - \bm c \otimes \nabla \phi = 0$ and the division is well-defined. To prove that this term defines a weighted weak derivative, let $\bm w \in (C^\infty(\mathbb T^d))^m$ be arbitrary. Then, $\!\d t$-almost surely,
$$
\begin{aligned}
\langle \phi_t \partial_i \bm c_t, \bm w \rangle = \langle \partial_i \bm X_t - \bm c_t \partial_i \phi_t, \bm w \rangle  &= - \langle \bm X_t, \partial_i \bm w \rangle - \langle \bm c_t \partial_i \phi_t, \bm w \rangle \\ &= - \langle \phi_t \bm c_t, \partial_i \bm w \rangle - \langle \bm c_t \partial_i \phi_t,\bm w \rangle.
\end{aligned}
$$
Thus, we see that $\!\d t$-almost surely, $\partial_i \bm c_t $ defines a weighted weak derivative of $\bm c$ with respect to $\phi$. However, we also need to prove that $\partial_i \bm c_t \in {L}^2(\mathbb T^d;\phi_t \d {\bm x})$. This is equivalent to proving $$\mathbb 
E'\left[\int_0^t\norm{\sqrt{\phi_s} \nabla \bm c_s}
^2_{{L}^2} \d s\right] = \mathbb E'\left[\int_0^t\frac{\norm{\nabla \bm X_s - \bm c_s \otimes\nabla \phi_s}
^2_{{L}^2}}{\phi_s} \d s\right] < \infty.$$ 
To this end, introduce a small regularisation parameter $\eta$ and observe the weak convergence  
$$
\frac1{\sqrt{\phi^{(k)}+\epsilon_{n_k}}+\eta}(\phi^{(k)}+\epsilon_{n_k})\nabla \bm c^{(k)} \rightharpoonup\frac{\nabla \bm X - \bm c \otimes\nabla \phi}{\sqrt{\phi}+\eta},
$$
which follows by weak convergence of the reweighted derivative and strong convergence of the regularised singular term. Lower semicontinuity and Fatou's lemma thus show that
$$\mathbb E'\left[\int_0^t\frac{\norm{\nabla \bm X_s - \bm c_s \otimes\nabla \phi_s}
^2_{{L}^2}}{\phi_s+\eta} \d s\right]  \leq \sup_{k \geq 1} \mathbb 
E'\left[\int_0^t\norm{\sqrt{\phi_s+\epsilon_{n_k}} \nabla \bm 
c^{(k)}_s}^2_{{L}^2} \d s\right] < \infty$$
and the claim follows by monotone convergence.
Finally, observe that the convergence and continuity properties $\phi, \bm c$ and $\phi \bm c$ imply that $\bm c_t \in \mathcal 
X_{\bm c}$ for all $t \in [0,T]$, $\mathbb P'$-
a.s.
\end{proof}

\section{Solution theory of the coupled system} 
\label{subsec:WeakSolCoupledSection}
We now aim to solve the system of coupled equations that describes the intracellular dynamics of 
biochemical components for the case of a dynamic phase-field that interacts with the dynamics of the 
stochastic reaction-diffusion equation describing the kinetics inside the phase-field. As in the 
preceding section, we modify the original system of equations by truncating the noise outside of a 
bounded hypercube $\mathcal K$ that is left invariant under the dynamics of the nonlinearity $ f$. 
The system of equations we investigate is therefore given by
\begin{equation} \label{eq:WeakSolFullSystem} 
\begin{cases}
\partial_t \phi_t &= \gamma \Delta \phi_t + g(\phi_t,\bm c_t) + \Psi(\phi_t,\bm c_t,\nabla \phi_t)
\\
\d \bm c_t &= \left(\bm D \Delta \bm c_t + \bm D \frac{\nabla \bm c
_t\nabla \phi_t}{\phi_t}  +  f(\phi_t,  \bm c_t)\right) \d t  +  b(\phi_t,\bm c_t)\d {\bm W}^Q_t,
\end{cases} 
\end{equation}
where we expanded the singular diffusion term as $$\frac{1}{\phi_t}\nabla \cdot\left(\phi_t\bm D\nabla \bm c_t\right) = \bm D \Delta \bm c_t + \bm D \frac{\nabla \bm c
_t\nabla \phi_t }{\phi_t},$$ for $\Delta \bm c_t \colon \mathbb T^d \rightarrow \mathbb R^m$ defined componentwise and $ \underbrace{\nabla \bm c_t}_{\in \mathbb R^{m \times d}}\underbrace{\nabla \phi_t}_{\in \mathbb R^d} \in \mathbb R^m$ defined as in the previous section. 

We begin this section with its main definitions and results, which posit existence of solutions of various strengths, depending on the behaviour of the initial condition $\phi_0$ near $\phi_0 \approx 0$.
\begin{remark}
To facilitate readability of theorem statements, we use an auxiliary probability space $(\Omega, \mathcal F, \mathbb P)$ to specify properties of initial distributions of martingale solutions.
\end{remark}

\begin{assumption}
Throughout all statements in this section, we assume that $\phi_0 \neq 0$ and $\bm c_0$ satisfy $$\mathbb P(\phi_0 \in \mathcal X_\phi) = \mathbb P(\bm c_0 \in \mathcal{X}_{\bm c}) =1 $$ and $\phi_0 \in L^\infty(\Omega;H^{1,2})$. In particular, we assume that $0 \leq \phi_0 \leq K_\phi$. W.l.o.g., set $K_\phi = 1$. Moreover, we again assume that $\sqrt{Q} \in HS({L}^2,{H}^{r,2})$ for some $r > \frac d 2-1 \vee 0$.
\end{assumption}
\begin{definition}[Admissible weight] \label{def:AdmissibleWeight} Let a filtered probability space $(\Omega, \mathcal F,  
(\mathcal F_t)_{t \geq 0},\mathbb P)$ and an $\mathcal F_t$-adapted phase-field 
$(\phi_t)_{t \in [0,T]} \subset H^{1,2}$ be given. A weight 
$$ 
\rho \in L^2(\Omega;L^2([0,T]; H^{1,2})) \cap L^\infty(\Omega;L^\infty([0,T];L^\infty))
$$ 
is called admissible if it is $\mathcal F_t$-adapted, absolutely continuous in $(H^{1,2}_b)^\ast$ with 
$$
\partial_t \rho \in L^2(\Omega;L^2([0,T];( H^{1,2})^\ast)) + L^1(\Omega;L^1([0,T];L^1)), 
$$ 
and satisfies the integrability condition 
$$
\mathbb E\left[\int_0^T \int \frac{|\nabla \phi_t|^2}{\phi^2_t}\rho^2_t \d {\bm x} \d t\right] < \infty. 
$$ 
\end{definition}
\begin{remark}
Any admissible weight has a  representant in $C_w([0,T];L^2)$.
\end{remark}
\begin{definition}[Weighted martingale solution, coupled case] \label{MartSolDynamic}
Fix $T > 0$, a trace class operator $Q \in L({L}^2)$ and a filtered probability space $(\Omega, \mathcal F, (\mathcal F_t)_{t \geq 0},\mathbb P)$. Let $\mathcal F_0$-measurable initial values $(\phi_0, \bm c_0) \in {L}^2((\Omega;L^2 \oplus{L}^2)$ be given. We say that $$\phi \in L^2(\Omega \times[0,T];H^{1,2}), ~\bm c \in L^2(\Omega \times [0,T];{L}^2)$$ solve the weak weighted martingale problem associated with equation \eqref{eq:WeakSolFullSystem} for noise covariance $Q$ if
\begin{enumerate}[(i)]
    \item $\phi,\bm c$ are $(\mathcal F_t)$-progressively measurable
    \item $\mathbb{P}$-almost surely, $\phi \in C_w([0,T];{L}^2)$ solves equation \eqref{DecoupledMembEq} with parameter $\bm c$
    \item $\phi \bm c\in C_w([0,T];{L}^2)$ is $\!\d {\bm x}$-weakly differentiable on a set of full $\!\d t$-measure
    \item  for all $t \in [0,T]$ and admissible weighted test functions $\bm v_t = \rho^2_t \bm u$ with $\bm u \in {H}^{1,2}_b$, $\bm c$ satisfies the variational equation 
    $$\begin{aligned}
    \langle \bm c_t, \bm v_t \rangle &= \langle \bm c_0, \bm v_0 \rangle + \int_0^t - \langle \bm D\nabla \bm c_s, \nabla \bm v_s \rangle  + \langle \bm D\frac{\nabla \bm c_s\nabla \phi_s}{\phi_s}, \bm v_s\rangle  + \langle  f(\phi_s,\bm c_s),\bm v_s \rangle  \d s 
    \\ &\quad+ \int_0^t \langle \bm v_s, \cdot \rangle  \d M_s + \int_0^t \langle c_s,\partial_s \bm v_s\rangle \d s
    \end{aligned}
    $$
    with $\rho \nabla \bm c \in L^2(\Omega \times [0,T] \times \mathbb T^d)$.
    \item Here $M$ denotes a continuous, square integrable $L^2$-valued martingale adapted to the filtration $\mathcal F_t$, with covariation $$\int_0^\cdot  b(\phi_s, \bm c_s) Q  b^\ast(\phi_s, \bm c_s)\d s.$$  
\end{enumerate} 
\end{definition}

We now state the main result of this section.
\begin{theorem}
\label{WeakSystemExistence}
Let initial data $\phi_0 \in L^\infty(\Omega;H^{1,2})$, $\bm c_0 \colon \Omega\rightarrow \mathcal X_{\bm c}$ be given with $\phi_0 \in \mathcal X_\phi$ almost surely. Then there exists a filtered probability space $(\Omega', \mathcal F', (\mathcal F'_t)_{t \geq 0}, \mathbb P')$ such that the martingale 
problem associated with equation \eqref{eq:WeakSolFullSystem} possesses a weighted solution (cf. Definition 
\ref{MartSolDynamic}) 
$$
(\phi,\bm c) \in L^\infty(\Omega'; C([0,T]; H^{1,2}) \oplus L^2([0,T];{L}^2))  
$$ 
with initial values distributed as $\phi_0 \sim \mu_{\phi_0}$, $\mathds{1}_{\Set{\phi_0>0}}\bm c_0 \sim\mu_{\mathds{1}_{\Set{\phi_0>0}}\bm c_0}$ and noise covariance 
$Q$. In particular, $\phi_t \in \mathcal X_{\phi}$ and $c_t \in \mathcal X_{\bm c}$ for all $t \in 
[0,T]$,  $\mathbb P'$-almost surely, and $$\rho^2\bm c \in L^2(\Omega';L^2([0,T]; {H}^{1,2})) \cap 
L^\infty(\Omega';C([0,T]; {L}^2))$$ for any admissible weight $\rho$.
\end{theorem}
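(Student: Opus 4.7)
The strategy is a coupled analogue of the proof of Theorem \ref{StationaryExistence}, combined with the observation that, for $\phi$ solving a parabolic equation with $0\leq\phi_0\leq 1$, the powers $\phi^\alpha$ with $\alpha\in(0,\tfrac12)$ form a uniformly admissible family of weights. The first step is to regularise the singular drift by replacing $1/\phi_t$ with $1/(\phi_t+\epsilon)$ in the $\bm c$-equation, optionally combining this with the $\nabla^\tau$ truncation of \eqref{MoreTruncStationaryEquation}. The regularised coupled system is then solved by a Picard-type iteration: given $\phi^{(m)}$, obtain $\bm c^{(m+1)}$ via Theorem \ref{StationaryExistence}, whose hypotheses hold because parabolic theory for the Hamilton--Jacobi-type equation of $\phi^{(m)}$, together with Assumptions \ref{NemytskiiAssumption}--3 and $\nabla\phi_0\in L^\infty(\Omega;L^2)$, yields $\phi^{(m)}\in L^\infty(\Omega;C([0,T];H^{1,2}))$; then obtain $\phi^{(m+1)}$ from the (deterministic) parabolic equation with data $\bm c^{(m+1)}$. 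A joint tightness/Skorokhod step produces a solution $(\phi^\epsilon,\bm c^\epsilon)$ of the $\epsilon$-regularised coupled system, and the invariance conditions in Assumption \ref{NemytskiiAssumption} give $\phi^\epsilon_t\in\mathcal X_\phi$, $\bm c^\epsilon_t\in\mathcal X_{\bm c}$ for every $t$, almost surely.

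\emph{Uniform bounds and compactness.} Testing the $\phi$-equation against $-\phi^{\alpha-1}/\alpha$ (justified via a further $\phi+\delta$ regularisation and Assumption \ref{NemytskiiAssumption}) yields, uniformly in $\epsilon>0$ and in $\alpha\in(0,\tfrac12)$,
\begin{equation*}
\alpha\,\mathbb E\left[\int_0^T \int \frac{|\nabla\phi^\epsilon_t|^2}{(\phi^\epsilon_t)^{2-2\alpha}}\,\mathrm dx\,\mathrm dt\right] \leq C.
\end{equation*}
Applying Lemmas \ref{MyProdIto} and \ref{MySquareIto} to the functional $u\mapsto\int(\phi^\epsilon_t)^{2\alpha}|u|^2\,\mathrm dx$ (and using that in the coupled case $\partial_t\phi^\epsilon\in L^2((H^{1,2})^\ast)+L^1(L^1)$, precisely the regularity tolerated by Definition \ref{MartSolDynamic}) produces the coupled analogue of Proposition \ref{stationaryApriori}: uniform bounds on $(\phi^\epsilon)^{\alpha}\bm c^\epsilon$ in $L^2(\Omega;L^2([0,T];\bm H^{1,2}))$ and in $W^{\beta,1}([0,T];(\bm H^{1,2}_b)^\ast)$ for $\beta\in(0,\tfrac12)$, together with BDG bounds on the martingale part. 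Combined with parabolic bounds on $\phi^\epsilon$ in $L^\infty(\Omega;C([0,T];H^{1,2}))$, Simon's compactness lemma \ref{p1compactness} and Jakubowski's Skorokhod theorem produce, on a new probability space $(\Omega',\mathcal F',\mathbb P')$, an almost sure limit $(\phi,\bm c,M)$ in a product space modelled on \eqref{CompactnessSpace}. Uniform integrability from the $L^\infty$-bounds yields $\mathbb P'\otimes\mathrm dt\otimes\mathrm dx$-a.s.\ converging subsequences of $\phi^{(k)},\nabla\phi^{(k)},\bm c^{(k)}$, which is what is needed to pass to the limit in all nonlinear terms, including the singular drift $\nabla\bm c\,\nabla\phi/\phi$ tested against $\phi^{2\alpha}\bm u$ with $\bm u\in\bm H^{1,2}_b$, and to identify $M$ as in Proposition \ref{StationaryApproximation} and the proof of Theorem \ref{StationaryExistence}.

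\emph{Diagonal extraction and upgrade to arbitrary admissible weights.} Running the construction along a diagonal sequence $\alpha_n\downarrow 0$ yields a single limit $(\phi,\bm c)$ for which the weighted variational identity of Definition \ref{MartSolDynamic} holds for every weight $\rho=\phi^{\alpha_n}$, $n\in\mathbb N$. To upgrade to an arbitrary admissible $\rho$, note that Proposition \ref{WeightSupport} forces $\rho_0$ (and, propagated by the weighted parabolic structure, $\rho_t$) to be controlled by powers of $\phi$ on $\{\phi_0\approx 0\}$. Approximating $\rho^2\bm u$ by cut-offs of the form $(\rho^2\wedge N\phi^{2\alpha_n})\bm u$ and letting first $N\to\infty$ and then $n\to\infty$, the admissibility bound $\mathbb E\iint \rho^2|\nabla\phi|^2/\phi^2<\infty$ provides the dominating envelope needed to pass from the identity for $\phi^{\alpha_n}$ to the identity for $\rho$ by dominated convergence.

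The main obstacle is handling the time-dependent weighted test function $\bm v_t=\rho_t^2\bm u$ in Definition \ref{MartSolDynamic}. Expanding $\langle\bm c_t,\rho_t^2\bm u\rangle$ via the product Itô formula (Lemma \ref{MyProdIto}) produces a $\partial_t\rho^2$-contribution of only $L^2((H^{1,2})^\ast)+L^1(L^1)$ regularity, which must be reconciled with the similarly low integrability of the singular drift $\nabla\bm c\,\nabla\phi/\phi$ tested against $\rho^2$. It is precisely the uniform $\alpha$-weighted estimate above, combined with the interior strict positivity $\phi^\epsilon_t>0$ for $t>0$ afforded by the maximum principle recalled from \cite{BenArtziTorus}, that closes the argument.
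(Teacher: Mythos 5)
Your overall architecture (regularise the singular drift, derive the uniform $\alpha$-weighted gradient estimate, apply the weighted It\^o formulas, compactness/Skorokhod along a diagonal sequence $\alpha_n\downarrow 0$, then upgrade to general weights) matches the paper's strategy, but two of your steps have genuine gaps. First, the construction of the $\epsilon$-regularised coupled solution by Picard iteration through Theorem \ref{StationaryExistence} does not work as stated: that theorem produces only a \emph{martingale} solution, i.e.\ a process on a \emph{new} probability space with a new noise, so the iterates $\phi^{(m)},\bm c^{(m+1)}$ do not live on a common filtered space and the coupling between successive steps is not even well defined; moreover, even granting a joint tightness/Skorokhod step for the iterates, you give no contraction or stability estimate guaranteeing that the limits of $\phi^{(m)}$ and $\phi^{(m+1)}$ coincide, which is exactly what is needed for the limit to solve the \emph{coupled} system. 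The paper avoids this entirely: after the $\nabla^\tau$ truncation and $\epsilon$-regularisation the joint system $(\phi,\bm c)$ is solved directly as a single variational SPDE via Theorem 5.1.3 of \cite{LiuRoeckner2015} in the Gelfand triple $H^{1,2}\oplus\bm H^{1,2}\hookrightarrow L^2\oplus\bm L^2\hookrightarrow(H^{1,2}\oplus\bm H^{1,2})^\ast$, and the $\tau$-truncation is then removed by a compactness argument (Proposition \ref{WeakApproxSystemExistence}); if you want to keep an iteration you would have to work with the strong solutions of Proposition \ref{StationaryTruncExistence} on a fixed space and prove convergence, which your proposal does not do.

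Second, your upgrade from the weights $\phi^{\alpha_n}$ to an arbitrary admissible weight $\rho$ via the cut-offs $(\rho^2\wedge N\phi^{2\alpha_n})\bm u$ rests on the claim that Proposition \ref{WeightSupport} forces $\rho$ to be ``controlled by powers of $\phi$'' near $\{\phi_0\approx 0\}$; the proposition only yields $\mathds{1}_{\{\phi_0=0\}}\rho_0\equiv 0$ and gives no quantitative domination, and the truncated functions $\rho^2\wedge N\phi^{2\alpha_n}$ need not have the time-regularity ($\partial_t\in L^2((H^{1,2})^\ast)+L^1(L^1)$) required of a test weight, so the dominated-convergence envelope you invoke is not in place. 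The paper's route is different and closes this: it first passes to the limit in the identity \eqref{Gamma} for $\phi^{\alpha}\bm c$, then for a general admissible $\rho$ applies the product It\^o formula (Lemma \ref{MyProdIto}) to $\rho^2$ and $\phi^\alpha\bm c$ directly, proves a uniform-in-$\alpha$ bound on $\phi^\alpha\rho\nabla\bm c$ in $L^2$ by another weighted energy estimate (absorbing the $\partial_s\rho$-term by Young's inequality into the dissipation), and only then sends $\alpha\to 0$, using $\alpha\nabla\phi/\phi^{1-\alpha}\to 0$ in $L^2$ (from Lemma \ref{arbitraryphibound}) and the admissibility condition $\mathbb E\int_0^T\int\rho^2|\nabla\phi|^2/\phi^2\,\mathrm dx\,\mathrm dt<\infty$ to control the singular drift term. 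Without this uniform bound on $\phi^\alpha\rho\nabla\bm c$ and the direct $\alpha\to 0$ passage, your final step does not go through.
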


\begin{corollary}
\label{MartingaleSolutionLog}
Assume the same conditions as in the preceding theorem and additionally, that $\log \phi_0 
\in L^1(\Omega;L^1)$. Then there exists a filtered probability space $(\Omega', \mathcal F', 
(\mathcal F'_t)_{t \geq 0}, \mathbb P')$ such that the martingale problem associated with equation \eqref{eq:WeakSolFullSystem} possesses a solution in the classical sense (Definition \ref{MarSolClassical}) with respect to the dualisation $$H^{1,2} \oplus {H}^{1,2}\hookrightarrow L^2 \oplus {L}^2 \hookrightarrow (H^{1,2} \oplus {H}^{1,2}_b)^\ast$$ with
$$
(\phi,\bm c) \in L^\infty(\Omega'; C([0,T]; H^{1,2}) \oplus C([0,T];{L}^2)) 
$$ 
and initial values distributed as $\phi_0 \sim \mu_{\phi_0}$, $\bm c_0 \sim\mu_{\bm c_0}$ and noise 
covariance $Q$.  Additionally, $\phi_t \in \mathcal X_{\phi}$, $c_t \in \mathcal X_{\bm c}$ 
for all $t \in [0,T]$,  $\mathbb P'$-almost surely.
\end{corollary}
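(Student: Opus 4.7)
The plan is to upgrade the weighted martingale solution supplied by Theorem \ref{WeakSystemExistence} to a classical one by showing that, under the additional hypothesis $\log \phi_0 \in L^1(\Omega;L^1)$, the constant weight $\rho \equiv 1$ becomes admissible. Once this is established, the classical variational identity is an immediate specialization of the weighted one. Let $(\phi,\bm c)$ on $(\Omega',\mathcal F',(\mathcal F'_t)_{t\geq 0},\mathbb P')$ denote the weighted solution from Theorem \ref{WeakSystemExistence}. Since $\phi_0>0$ $dx$-a.s.\ (a consequence of $\log \phi_0 \in L^1$), the parabolic maximum principle (cf.\ \cite{BenArtziTorus}) ensures $\phi_t>0$ pointwise for every $t>0$.

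The central step, and the main obstacle, is to verify
$$
\mathbb E'\left[\int_0^T \int \frac{|\nabla \phi_t|^2}{\phi_t^2}\, dx\, dt\right] < \infty.
$$
To this end, I would apply a regularized chain-rule identity to $-\log(\phi+\epsilon)$: using the identity $-\Delta \log(\phi+\epsilon) = -\Delta\phi/(\phi+\epsilon) + |\nabla\phi|^2/(\phi+\epsilon)^2$ and testing the phase-field equation against $1/(\phi+\epsilon)$ on $\mathbb T^n$, one obtains
$$
\gamma \int_0^T \int \frac{|\nabla\phi_s|^2}{(\phi_s+\epsilon)^2}\, dx\, ds + \int \log\frac{1}{\phi_T+\epsilon}\, dx = \int \log\frac{1}{\phi_0+\epsilon}\, dx - \int_0^T \int \frac{g(\phi_s,\bm c_s)+\Psi(\phi_s,\bm c_s,\nabla\phi_s)}{\phi_s+\epsilon}\, dx\, ds.
$$
Sending $\epsilon\downarrow 0$ by monotone convergence on the left (using $\log(1/(\phi_t+\epsilon))\geq 0$ since $\phi_t \leq 1$) and dominated convergence on the right, the initial term tends to $\int \log(1/\phi_0)\, dx<\infty$ a.s.; the $g$-contribution is bounded thanks to Assumption \ref{NemytskiiAssumption}, which gives $\limsup_{\phi\to 0}|g|/\phi<\infty$ on $\mathcal X_\phi\times \mathcal X_{\bm c}$; and the $\Psi$-term is handled using its decomposition $\Psi = \sum_i \Psi_i(\phi,\bm c)\varphi_i(\nabla\phi)$ with $\Psi_i \in L^\infty$ and $\varphi_i$ Lipschitz, so Young's inequality bounds it by a small multiple of the gradient term that can be absorbed into the left-hand side. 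Taking expectation then yields the bound, showing that $\rho \equiv 1$ satisfies all admissibility conditions.

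With $\rho \equiv 1$ admissible, I would choose time-independent test functions $\bm v_t \equiv \bm u \in \bm H^{1,2}_b$ in the weighted identity of Definition \ref{MartSolDynamic}. The $\partial_s \bm v_s$-term vanishes and the identity collapses to
$$
\langle \bm c_t, \bm u\rangle = \langle \bm c_0, \bm u\rangle + \int_0^t \left(-\langle \nabla\bm c_s, \nabla \bm u\rangle + \left\langle \frac{\nabla \bm c_s\nabla\phi_s}{\phi_s}, \bm u\right\rangle + \langle \bm f(\phi_s,\bm c_s), \bm u\rangle\right) ds + \int_0^t \langle \bm u, \cdot\rangle\, dM_s,
$$
which is the analytically weak form of \eqref{TruncFullSystem} and extends to all $\bm u \in \bm H^{1,2}$ by density of $\bm H^{1,2}_b$ together with the $\bm L^\infty$-bound on $\bm c$. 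Finally, to upgrade weak to strong continuity of $\bm c$ in $\bm L^2$, I would apply Lemma \ref{MySquareIto} in the Gelfand triple $\bm H^{1,2} \hookrightarrow \bm L^2 \hookrightarrow (\bm H^{1,2})^\ast$; this yields an Itô-type expression for $\|\bm c_t\|_{\bm L^2}^2$ whose right-hand side is continuous in $t$. Combining continuity of $t\mapsto \|\bm c_t\|_{\bm L^2}^2$ with weak $\bm L^2$-continuity of $\bm c$ (inherited from Theorem \ref{WeakSystemExistence}) gives $\bm c \in C([0,T];\bm L^2)$, completing the solution in the classical sense of Definition \ref{MarSolClassical}.
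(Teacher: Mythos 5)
Your proposal is correct and follows essentially the same route as the paper: the paper's proof simply invokes Lemma \ref{arbitraryphibound} (whose second assertion is obtained by exactly the regularized $\log\frac{1}{\phi+\epsilon}$ chain-rule computation you carry out, with the $g$-term controlled via Assumption \ref{NemytskiiAssumption} and the $\Psi$-term absorbed by Young's inequality) to conclude that $\rho\equiv 1$ is admissible, after which the classical martingale solution is an immediate specialization of the weighted identity, as you spell out. The only difference is that you re-derive the a priori estimate instead of citing the lemma, and you make explicit the specialization to constant weights and the continuity upgrade that the paper leaves implicit.
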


\begin{proof}[Proof of Corollary \ref{MartingaleSolutionLog}]
If $\log \phi_0 \in L^1$, then Lemma \ref{arbitraryphibound} demonstrates that $\rho_s \equiv 1$ constitutes an admissible weight. It follows immediately that $\bm c$ is actually a martingale solution of the unweighted equation.
\end{proof}
\begin{remark}
One could have proven Corollary \ref{MartingaleSolutionLog} also directly, using the bound given by Lemma \ref{arbitraryphibound} and the compactness properties given by Lemma \ref{p1compactness}.
\end{remark}
The purpose of the following results is to elucidate the nature of the weights we employ. We note though that Proposition \ref{prop:WeightSupport} in particular will be needed to properly identify the initial value of weighted solutions. 
\begin{proposition} \label{prop:WeightSupport}
Let $\rho$ be an admissible weight. Then $\mathds{1}_{\{\phi_0=0\}}\rho_0 \equiv 0$. 
\end{proposition}
\begin{proof}
Deferred to the appendix. We note here that the proof identifies an independently interesting property; namely, the explosion of logarithmic gradient encodes information about size of the null-set of parabolic equations through the identity 
\begin{equation*}
\mu (\Set{\phi_0 > 0}) = \lim_{\alpha \to 0}\alpha \int_0^t \int \frac{|\nabla \phi_s|^2}{\phi^{2-\alpha}_s} \d \bm x \d s. \qedhere
\end{equation*}
\end{proof}

\begin{example} \label{HeatEqWeight}
Let $h$ denote the solution of the heat equation on $\mathbb T^d$ with random initial condition $h(0) = \phi_0$. Then $\sqrt{h}$ is an admissible weight. We included a proof of this fact in the appendix. 
\end{example}
\begin{example}
Lemma \ref{arbitraryphibound} below will show that in particular, positive powers $(\phi^{sub}_t)^\alpha$ of the deterministic subsolution $\phi^\text{sub}$ obtained from Proposition \ref{Subsolution} are admissible weight functions, and so is  $\phi^\alpha_t$, for any $\alpha > 0$.
\end{example}
\begin{corollary} \label{L1condition}
Let $\phi_0 \in L^\infty(\Omega;H^{1,2})$ with $\phi_0 \in \mathcal X_\phi$ almost surely. If $\rho \equiv 1$ is admissible, i.e. $\nabla \log \phi \in L^2([0,T];L^2)$, then $\phi_0 > 0$, $\d {\bm x}$-almost surely, and $\log \frac{1}{\phi_0} \in L^1$.
\end{corollary}
\begin{proof}
This follows by taking $\rho \equiv 1$ in \eqref{eq:WeightSupport}, dividing by $\alpha$ and taking $\alpha \to 0$. Since $\lim_{\alpha \to 0} \frac{1-x^\alpha}{\alpha} \to \log \frac{1}{x}$, Fatou's Lemma then gives that $$\int \log \frac{1}{\phi_0} \d {\bm x} \leq \int \log \frac{1}{\phi_t} + \int_0^T \int \frac{|\nabla \phi_t|^2} {\phi^2_t}\d {\bm x} \d s + \const,$$ for some constant $C_{\themycounter}$ independent of $\alpha$.
\end{proof}

\begin{remark}
Combined with Proposition \ref{arbitraryphibound}, we obtain the equivalance $$\phi_0 > 0 \text{ and } \log \frac{1}{\phi_0} \in L^1\iff \nabla \log \phi \in L^2([0,T];L^2)$$ for $\phi_0 \in L^\infty(\Omega;H^{1,2})$ with $0 \leq \phi_0 \leq 1$. This indicates optimality of the $L^1$ condition on the logarithm: we cannot a-priori expect any better integrability than $\nabla c \in L^2([0,T];{L}^2)$, so $\nabla \log \phi \nabla \bm c \in L^1([0,T];{L}^1)$ requires at least $\nabla \log \phi \in L^2([0,T];L^2)$.
\end{remark}

To prove Theorem \ref{WeakSystemExistence}, we again show as an intermediate step that for bounded initial data $\phi_0 \in \mathcal X_\phi$ and $\bm c_0 \in \mathcal X_{\bm c}$, there exist 
solutions to a truncated version of equation \eqref{eq:WeakSolFullSystem}, this time given by 
\begin{equation} 
\label{CoupledTruncTruncSys}
\begin{cases}
\partial_t \phi_t &= \gamma \Delta \phi_t + g(\phi_t,\bm c_t) + \Psi(\phi_t,\bm c_t,\nabla \phi_t) 
\\ \d \bm c_t &= \left(\bm D \Delta \bm c_t + \bm D \frac{\nabla \bm c_t\nabla \phi_t}{\phi_t+\epsilon}  +  f(\phi_t,  \bm c_t)\right) \d t +  b(\phi_t,\bm c_t)\d {\bm W}^Q_t
\end{cases}
\end{equation}
where $\phi_0 \sim \mu_{\phi_0}$, $\bm c_0 \sim \mu_{\bm c_0}$. Obtaining this solution will already be rather involved; we will mirror the approach of the last section and first show existence of solutions of  
\begin{equation} \label{eq:WeakSolTruncFullSystem}
\begin{cases}
\partial_t \phi_t &= \gamma \Delta \phi_t + \tilde g(\phi_t,\bm c_t) + \tilde \Psi(\phi_t,\bm c_t,\nabla^\tau \phi_t) 
\\ \d \bm c_t &= \left(\bm D \Delta \bm c_t + \bm D \frac{\nabla^\tau \bm c_t\nabla^\tau \phi_t}{\phi_t+\epsilon}  + \tilde{ f}(\phi_t,  \bm c_t)\right) \d t + \tilde{ b}(\phi_t,\bm c_t)\d {\bm W}^Q_t.
\end{cases}
\end{equation}
Here, $\nabla^\tau$ is defined as in the previous section and the truncated terms $\tilde g, \tilde \Psi, \tilde { f}$ and $\tilde{ b}$ are now truncated in both coordinates, e.g. $\tilde g(x, \bm y) = g(\tilde {x}, \tilde{\bm y})$ (cf. \eqref{eq:pointwisetruncation}).
\begin{proposition}[Existence of weak solutions of the approximating system] \label{WeakApproxSystemExistence} Assume the setting of Theorem \ref{WeakSystemExistence}. Fix $\epsilon > 0$. Then there exists a filtered probability space $(\Omega', \mathcal F', (\mathcal F'_t)_{t \geq 0}, \mathbb P')$ such that the martingale problem associated with equation \eqref{CoupledTruncTruncSys} possesses a solution $(\phi^\epsilon,\bm c^\epsilon)$ with respect to the dualisation $$H^{1,2} \oplus {H}^{1,2}\hookrightarrow L^2 \oplus {L}^2 \hookrightarrow (H^{1,2} \oplus {H}^{1,2}_b)^\ast$$ such that   $$\phi^\epsilon \in L^\infty(\Omega';C([0,T];H^{1,2})), ~\bm c^\epsilon \in L^2(\Omega';L^2([0,T];{H}^{1,2})\cap C([0,T];{L}^2)).$$  In particular, it almost surely holds that $\phi^\epsilon_t \in \mathcal X_\phi$, $\bm c^\epsilon_t \in \mathcal X_{\bm c}$ for all $t \in [0,T]$.
\end{proposition}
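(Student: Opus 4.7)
The plan is to add an extra gradient truncation, solve the resulting doubly truncated system via the variational existence theorem of Liu and R\"ockner, and then pass to the limit $\tau \to \infty$ via a compactness and Jakubowski--Skorokhod argument in the spirit of Section \ref{StationarySect}. Concretely, for $\tau > 0$ replace every occurrence of $\nabla \phi$ on the right-hand sides of \eqref{CoupledTruncTruncSys} by $\nabla^\tau \phi$, and precompose $g, \bm f, \Psi$ with projections onto $\mathcal X_\phi \times \mathcal X_{\bm c}$.

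For this doubly truncated system the coefficient $\frac{\nabla^\tau \phi}{\phi + \epsilon}$ is bounded by $\tau/\epsilon$ in $L^\infty$, so the coupling drift in the $\bm c$-equation is a bounded perturbation of $\bm D \Delta \bm c$ absorbed by Young's inequality; similarly the truncated $\Psi$ is bounded pathwise and absorbed by $\gamma \Delta \phi$. Together with the local monotonicity argument for $\bm b_{\bm \eta}$ from Proposition \ref{StationaryTruncExistence} (Gagliardo--Nirenberg with exponent $2\nu'$ plus dissipativity of $\Delta$), the hypotheses of Theorem 5.1.3 in \cite{LiuRoeckner2015} can be verified on the coupled Gelfand triple $H^{1,2} \oplus \bm H^{1,2} \hookrightarrow L^2 \oplus \bm L^2 \hookrightarrow (H^{1,2} \oplus \bm H^{1,2}_b)^\ast$, yielding a probabilistically strong variational solution $(\phi^\tau, \bm c^\tau)$. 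Pathwise invariance $\phi^\tau_t \in \mathcal X_\phi$, $\bm c^\tau_t \in \mathcal X_{\bm c}$ for all $t$ then follows by the same It\^o-penalisation / deterministic maximum-principle arguments used after Proposition \ref{StationaryTruncExistence} and in the uncoupled case, invoking the invariance assumptions on $\bm f$ and $g$. Consequently the nonlinearity truncations become inactive.

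A pathwise parabolic energy estimate for the $\phi^\tau$-equation, testing in turn with $\phi^\tau$ and $-\Delta \phi^\tau$ and using boundedness of $g, \Psi_i$ on $\mathcal X$ together with the Lipschitz property of the $\varphi_i$, yields
$$\sup_{t \in [0,T]} \|\phi^\tau_t\|_{H^{1,2}}^2 + \int_0^T \|\Delta \phi^\tau_s\|_{L^2}^2 \d s \leq C(T, \|\phi_0\|_{H^{1,2}})$$
uniformly in $\tau$ and $\omega$, so that $\phi^\tau \in C([0,T];H^{1,2})$ pathwise. Combined with the bounds for $\bm c^\tau$ from Proposition \ref{FirstStationaryApriori} and Lemma \ref{p1compactness}, this produces joint tightness of the laws of $(\phi^\tau, \bm c^\tau, M^\tau)$ on a suitable product space. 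The Jakubowski--Skorokhod theorem then supplies a new probability space carrying almost surely convergent copies $(\phi^{(k)}, \bm c^{(k)}, M^{(k)})$ with $\tau_k \to \infty$, and the identification of the limit $(\phi^\epsilon, \bm c^\epsilon, M)$ as a solution of the martingale problem for \eqref{CoupledTruncTruncSys}, together with the strong continuity in $\bm L^2$ and the covariation of $M$, follows the template of Proposition \ref{StationaryApproximation}.

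The main obstacle is the passage to the limit in the nonlinear coupling term $\frac{\nabla \bm c^{(k)} \nabla^{\tau_k} \phi^{(k)}}{\phi^{(k)} + \epsilon}$ inside the variational identity, since $\nabla \bm c^{(k)}$ converges only weakly in $L^2([0,T];\bm L^2)$ and the product requires strong compactness on the $\phi$-side. The key point is that $\phi^{(k)}$ solves a deterministic parabolic equation with uniformly $L^\infty$-bounded right-hand side, so by the above energy estimate it enjoys uniform $L^2_t H^{2,2}$ regularity and hence, via Aubin--Lions, strong $L^2([0,T]; H^{1,2})$-convergence. Since eventually $|\nabla \phi^{(k)}| \leq \tau_k$ pathwise, $\nabla^{\tau_k} \phi^{(k)} \to \nabla \phi^\epsilon$ strongly in $L^2_t L^2_x$; combined with the uniform bound $(\phi^{(k)} + \epsilon)^{-1} \leq \epsilon^{-1}$ and essential boundedness of the test functions $\bm v \in \bm H^{1,2}_b$, the weak-strong product passes to the limit in the variational identity as required.
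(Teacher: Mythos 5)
There is a genuine gap at the very first step. You only truncate $\nabla \phi$ in the coupling term, whereas the paper's intermediate system \eqref{CoupledTruncTruncSys} is regularised as $\bm D\,\frac{\nabla^\tau \bm c_t\,\nabla^\tau\phi_t}{\phi_t+\epsilon}$, i.e.\ \emph{both} gradients are truncated. This is not cosmetic: in the coupled Gelfand triple the local monotonicity condition of Theorem 5.1.3 in \cite{LiuRoeckner2015} must be checked for differences of solutions in which $\phi$ also varies, and the difference of your drifts contains the term
\[
\Bigl\langle \nabla \bm c_2\Bigl(\tfrac{\nabla^\tau\phi_1}{\phi_1+\epsilon}-\tfrac{\nabla^\tau\phi_2}{\phi_2+\epsilon}\Bigr),\,\bm c_1-\bm c_2\Bigr\rangle .
\]
Here $\nabla \bm c_2$ is controlled only in $L^2$, and the bracket is controlled only by $\frac1\epsilon|\nabla\phi_1-\nabla\phi_2|+\frac{\tau}{\epsilon^2}|\phi_1-\phi_2|$, again only in $L^2$; so the trilinear term forces $\bm c_1-\bm c_2$ into $L^\infty$ (unavailable), or, after interpolation, produces a factor $\norm{\bm c_2}_{H^{1,2}}^{2/(1-\theta)}$ with exponent strictly larger than the coercivity exponent $2$, violating the admissible growth of $\rho$ in the local monotonicity condition. "Bounded perturbation absorbed by Young" works for the uncoupled Proposition \ref{StationaryTruncExistence}, where $\frac{\nabla^\tau\phi}{\phi+\epsilon}$ is a fixed bounded coefficient, but not here, where the coefficient is itself a solution variable. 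The paper avoids this precisely by also truncating $\nabla\bm c$, which makes the coupling Lipschitz from $H^{1,2}\oplus \bm H^{1,2}$ to $L^2$ with constants depending on $\tau,\epsilon$.

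As a consequence, your proposal also omits what is in fact the core of the paper's proof: removing the truncation on $\nabla\bm c$ in the limit $\tau_k\to\infty$. This requires showing that $\int_0^t\!\int(\nabla^{\tau_k}\bm c^{\tau_k}-\nabla\bm c^{\tau_k})\frac{\nabla^{\tau_k}\phi^{\tau_k}}{\phi^{\tau_k}+\epsilon}\bm v\,\d x\,\d s\to 0$, which the paper achieves by combining the Markov inequality on $\{|\nabla\bm c^{\tau_k}|\ge\tau_k\}$ with the uniform $L^\nu$-integrability of $\nabla\phi^{\tau_k}$ for some $\nu>2$ obtained via Gagliardo--Nirenberg from the $L^\infty_tH^{1,2}\cap L^2_tH^{2,2}$ bound. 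A related inaccuracy appears in your limit passage for the $\phi$-truncation: the claim that "eventually $|\nabla\phi^{(k)}|\le\tau_k$ pathwise" is unjustified, since the uniform bounds are in $H^{1,2}$, not $L^\infty$; the correct argument is again a Chebyshev estimate on $\{|\nabla\phi^{(k)}|\ge\tau_k\}$ together with the $\nu>2$ integrability. Your remaining architecture (pathwise $H^{1,2}$--$H^{2,2}$ energy estimates for $\phi$, tightness, Jakubowski--Skorokhod, Aubin--Lions/strong $L^2_tH^{1,2}$ convergence of $\phi^{(k)}$, weak--strong pairing for the second part of the coupling term, identification of $M$) does match the paper, but the existence step for the approximating system and the removal of the $\nabla^\tau\bm c$-truncation need to be repaired along the lines above.
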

Before we embark on the proof of the above lemma, we investigate the properties of $\phi$.

\begin{lemma}
\label{DecoupledMembEqExistence}
Let $\phi_0 \in H^{1,2}$ and $0 < \tau < \infty$ be given. Suppose that $\phi_0 \in \mathcal X_\phi$. Given some measurable $c \colon \Omega \times [0,T] \rightarrow \mathcal X_{\bm c}$ there exists a unique variational solution $\phi$ of \begin{equation}\label{DecoupledMembEqTrunc}
\partial_t \phi_t = \gamma \Delta \phi_t + g(\phi_t,\bm c_t) +\Psi(\phi_t, \bm c_t, \nabla^\tau \phi_t)
\end{equation}
with $\phi_t \in \mathcal X_\phi$ for all $t \in [0,T]$. These solutions satisfy the additional regularity
$$\norm{\phi}_{C([0,T];H^{1,2})} < \const \text{ and } \norm{\phi}_{L^2([0,T];H^{2,2})} < C_{\themycounter}$$ for $C_{\themycounter}$ dependent on $T$, $\norm{\phi_0}_{H^{1,2}}$ and the parameters of equation \eqref{DecoupledMembEqTrunc}, but not on $\tau > 0$ or $\bm c$.
\end{lemma}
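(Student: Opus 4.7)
The plan is to combine a Banach fixed-point argument for local existence with a Stampacchia-style invariance argument to obtain global solutions, and then derive the Hölder and $H^{2,2}$ bounds from parabolic maximal regularity estimates that are uniform in $\tau$.

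First I would set up the fixed-point iteration in the Banach space $X_T \coloneqq C([0,T]; L^2) \cap L^2([0,T]; H^{1,2})$, intersected with a closed ball in $L^\infty([0,T]; L^\infty)$ of radius $2K_\phi$, using the mild formulation
\[
\mathcal{T}\phi(t) = S_\gamma(t)\phi_0 + \int_0^t S_\gamma(t-s)\bigl[g(\phi_s,\bm c_s) + \Psi(\phi_s,\bm c_s,\nabla^\tau \phi_s)\bigr]\,ds,
\]
where $S_\gamma(t) = e^{\gamma t\Delta}$ is the heat semigroup on $\mathbb T^n$. The essential observation is that the map $v \mapsto \tau v/|v| \wedge v$ is a $1$-Lipschitz pointwise projection, so $|\nabla^\tau\phi_1 - \nabla^\tau\phi_2| \leq |\nabla\phi_1 - \nabla\phi_2|$ pointwise. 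Combined with the global Lipschitz property of the $\varphi_i$ and the local Lipschitz property of $\Psi_i, g$ in $L^\infty$, this makes the nonlinearity Lipschitz from $X_T$ to $L^2([0,T]; L^2)$ on balls in $L^\infty$, with constant depending only on the $L^\infty$-radius (not on $\tau$). A short-time contraction then follows from the standard $L^p$-$L^q$ smoothing of $S_\gamma$.

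Next I would establish invariance $\phi_t \in \mathcal X_\phi$ using Stampacchia truncation: test the equation against $(\phi - K_\phi)_+$ and against $\phi_- = (-\phi)_+$. Since $g$ vanishes on $\{\phi \equiv K_\phi\}$ and $\{\phi \equiv 0\}$ by Assumption \ref{NemytskiiAssumption}, and since at level sets of extremes $\nabla \phi = 0$ a.e. (so that $\varphi_i(\nabla^\tau \phi) = \varphi_i(0)$ can be absorbed by the Lipschitz property of $\Psi_i$ at the boundary values of $\phi$), the reaction term on $\{\phi > K_\phi\}$ is bounded by $L\,(\phi - K_\phi)_+$ and analogously from below. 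A Gronwall argument on $\tfrac12 \tfrac{d}{dt}\|(\phi-K_\phi)_+\|_{L^2}^2$ gives $(\phi - K_\phi)_+ \equiv 0$ and $\phi_- \equiv 0$. With this $L^\infty$-bound in hand, the local solution extends globally by iterating the fixed point on successive intervals, since the Lipschitz constants depend only on the $L^\infty$-radius. Uniqueness follows from the same energy estimate applied to the difference of two solutions.

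For the regularity bounds uniform in $\tau$, I would test the equation against $-\Delta \phi$ to obtain
\[
\tfrac12 \tfrac{d}{dt}\|\nabla \phi_t\|_{L^2}^2 + \gamma\|\Delta \phi_t\|_{L^2}^2 \leq \|g(\phi_t,\bm c_t)\|_{L^2}\|\Delta\phi_t\|_{L^2} + \|\Psi(\phi_t,\bm c_t,\nabla^\tau \phi_t)\|_{L^2}\|\Delta\phi_t\|_{L^2}.
\]
The key uniform-in-$\tau$ estimate is $|\Psi(\phi,\bm c,\nabla^\tau\phi)| \leq C(1 + |\nabla\phi|)$ by the Lipschitz property of each $\varphi_i$ together with $|\nabla^\tau\phi| \leq |\nabla\phi|$ and the $L^\infty$-bound on $\phi, \bm c$. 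Young's inequality then yields a Gronwall-closable estimate bounding $\|\phi\|_{L^\infty([0,T];H^{1,2})} + \|\phi\|_{L^2([0,T]; H^{2,2})}$ by a constant depending only on $T$, $\|\phi_0\|_{H^{1,2}}$, and $K_\phi, \mathcal K$. Writing $\partial_t\phi = \gamma\Delta\phi + G_t$ with $G \in L^2([0,T]; L^2)$ uniformly in $\tau$ then gives $\phi \in H^{1,2}([0,T]; L^2) \cap L^2([0,T]; H^{2,2})$, and the interpolation inequality $\|\phi\|_{C^{0,\alpha}([0,T]; H^{1,2})} \lesssim \|\phi\|_{H^{1/2+\alpha,2}([0,T]; H^{1,2})}$ (valid for $\alpha < 1/2$) yields the required Hölder regularity.

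The main obstacle I anticipate is the invariance step: unlike for $g$, the assumptions on $\Psi$ do not include an explicit boundary-vanishing condition, so the Stampacchia argument has to exploit both the structural form $\Psi = \sum_i \Psi_i(\phi,\bm c)\varphi_i(\nabla\phi)$ and the almost-everywhere vanishing of $\nabla\phi$ on level sets (Stampacchia's lemma) to show that $\Psi(\phi,\bm c,\nabla^\tau\phi) \cdot \mathds{1}_{\{\phi = K_\phi\}} = \Psi_i(K_\phi,\bm c)\varphi_i(0)$, which must then be controlled by the Lipschitz bound so that Gronwall closes. Once invariance is in hand, the rest of the proof is a standard mild/variational bootstrap.
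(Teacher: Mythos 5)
Your route to existence (mild formulation plus a Banach fixed point, then Stampacchia truncation for the invariance of $\mathcal X_\phi$) differs from the paper's, which obtains a unique variational solution of a $\phi$-truncated equation from the Liu--R\"ockner theorem and removes the truncation via the invariance $\phi_t \in \mathcal X_\phi$; your $-\Delta\phi$ energy estimate with $|\nabla^\tau\phi|\le|\nabla\phi|$ is exactly the paper's Galerkin-justified identity, so up to that point the proposal is a workable alternative. The genuine gap is the final H\"older claim: from $\phi\in H^{1}([0,T];L^2)\cap L^2([0,T];H^{2,2})$ you cannot conclude $\phi\in C^{0,\alpha}([0,T];H^{1,2})$. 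Interpolating these two spaces yields $H^{\theta}([0,T];H^{2(1-\theta),2})$; to retain spatial regularity $H^{1,2}$ you must take $\theta\le\frac12$, while the Sobolev embedding of $H^{\theta,2}$ in time into H\"older functions requires $\theta>\frac12$, and the borderline case only gives $C([0,T];H^{1,2})$ (Lions--Magenes). Your quoted inequality $\norm{\phi}_{C^{0,\alpha}([0,T];H^{1,2})}\le C \norm{\phi}_{H^{1/2+\alpha,2}([0,T];H^{1,2})}$ is fine, but its right-hand side is not controlled by the bounds you derived. The paper instead produces the H\"older bound from the mild representation and the factorisation method, i.e.\ semigroup smoothing applied to the Duhamel term, whose forcing lies in $L^\infty([0,T];L^2)$ by the energy estimate; this (or an equivalent smoothing argument) is what must replace your interpolation step.

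A secondary point concerns the invariance argument. The region that matters in the Stampacchia estimate is $\{\phi>K_\phi\}$ (resp.\ $\{\phi<0\}$), where $\nabla\phi$ does not vanish, so the observation that $\nabla\phi=0$ a.e.\ on the level set $\{\phi=K_\phi\}$ does not by itself neutralise $\Psi$. Writing $w=(\phi-K_\phi)_+$, what closes Gr\"onwall on $\{w>0\}$ is the pointwise bound $|\varphi_i(\nabla^\tau\phi)|\le|\varphi_i(0)|+L|\nabla w|$ together with Young's inequality to absorb the gradient part into the dissipation; the remaining contribution forces you to use $\varphi_i(0)=0$ (or a sign condition at the extreme values), without which invariance is simply false (take $\Psi\equiv 1$). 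This hypothesis is not contained in the stated assumptions, though it holds in all the applications ($\varphi_i=|\cdot|$), and the paper's own proof is equally terse here; still, your sketch as written does not close the estimate, so you should make this structural condition explicit and run the standard truncation computation on $\{w>0\}$ rather than on the level set.
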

\begin{remark} \label{LimitRemark}
By a fixed point iteration or a compactness argument, it is relatively straightforward to show that these solutions of the truncated equation \eqref{DecoupledMembEqTrunc} converge to a solution of \begin{equation}\label{DecoupledMembEq}
\partial_t \phi_t = \gamma \Delta \phi_t + g(\phi_t,\bm c_t) +\Psi(\phi_t, \bm c_t, \nabla \phi_t)
\end{equation} which inherits the regularity properties of the approximations. 
\end{remark}

\begin{proof}
By standard existence theorems \cite{LiuRoeckner2015}, we immediately obtain existence of unique variational solutions of a truncated version (in $\phi$) of equation \eqref{DecoupledMembEq}. Under the boundedness condition $\phi_0 \in \mathcal X_\phi$, we obtain that $\phi_t \in \mathcal X_\phi$ for all $t \in [0,T]$ and we can remove the truncations to see that these processes are solutions of \eqref{DecoupledMembEqTrunc}. Note that this in particular uses $\varphi_i(0)=0$.

Since we assumed the higher regularity $\phi_0 \in H^{1,2}$, we can use a Galerkin approximation to rigorously justify the energy identity 
$$\frac12\norm{\nabla \phi_t}^2_{L^2} = \frac12\norm{\nabla \phi_0}^2_{L^2}+\int_0^t-\gamma \norm{\Delta \phi_s}^2_{L^2} - \langle g(\phi_s,\bm c_s),\Delta \phi_s \rangle - \langle \Psi(\phi_s, \bm c_s, \nabla^\tau \phi_s),\Delta \phi_s\rangle \d s.$$
After an application of the Young inequality for products and the Grönwall lemma, this implies that 
\begin{equation} 
\label{phiRegularity}
\phi \in C([0,T];H^{1,2}) \text{ and } \Delta \phi \in L^2([0,T];L^2).
\end{equation} By the deterministic $L^\infty$-bounds on $\phi$ and $\bm c$, the upper bounds for both quantities in \eqref{phiRegularity} are only dependent on $T$, the parameters of \eqref{DecoupledMembEq} and the norm of the initial value. 
\end{proof}

\begin{corollary}
Assume the setting of the preceding Lemma. Then, in particular, $\partial_t \phi_t \in L^2([0,T];L^2)$ is well-defined. Moreover, $\phi$ is Hölder continuous in $L^2$ with Hölder seminorm $$[\phi_t]_{C^\frac12([0,T];L^2)} \leq \norm{\partial_t\phi}_{L^2([0,T];L^2)}.$$ 
\end{corollary}
\begin{proof}
The improved regularities imply that $\Delta \phi \in L^2([0,T];L^2)$, and this regularity transfers to the distributional derivative which can now be shown to exist in $L^2$. The second claim follows now from \begin{equation*}
\norm{\phi_t-\phi_s}_{L^2} \leq \int_0^t \norm{\partial_s \phi_s} \d s \leq |t-s|^\frac12 \norm{\partial_t \phi_t}_{L^2([0,T];L^2)}. \qedhere
\end{equation*}
\end{proof}
\begin{proposition}
\label{Subsolution}
Suppose that $\phi_0 \geq 0$ and $\phi_0 \not \equiv 0$. Let $\phi_t \in L^2([0,T];H^{1,2}) \cap L^\infty([0,T];L^2)$ denote a variational solution of \eqref{DecoupledMembEq}. Then, for all $\delta > 0$, there exists $\kappa_\delta > 0$ with
$$\inf_{\delta \leq t \leq T} \essinf_{x \in \mathbb T^d} \phi(t,x) > e^{-M_1T}\kappa_\delta \int \phi_0 \d {\bm x},$$ where $M_1$ denotes the Lipschitz constant of $g$ on $[0,1]$, and in particular the function $\phi$ has full support on $\mathbb T^d$ for all positive times.
\end{proposition}

\begin{proof}
Let $M_1, M_2$ denote the Lipschitz constants of $g, \Psi$ with respect to $\phi$ and $\nabla \phi$, respectively. For nonnegative initial conditions, the solution of the partial differential equation 
$$\partial_t u_t = \gamma \Delta u_t - M_1u_t - M_2|\nabla u_t|$$ with $u_0 = \phi_0$ is a subsolution of equation \eqref{DecoupledMembEq}. Define now $v_t = e^{M_1t} u_t$ and observe that $$\partial_t v_t = \gamma \Delta v_t - M_2|\nabla v_t| = \gamma \Delta v_t + b\cdot \nabla v_t$$ for the measurable vector field
$$
b(t,x)
\coloneqq
\begin{cases}
-M_2\dfrac{\nabla  v(t,x)}{|\nabla  v(t,x)|}, & |\nabla  v(t,x)|>0,\\
0, & |\nabla v(t,x)|=0.
\end{cases}
$$
Note that $|b(t,x)|\leq M_2$. Let $\Bar{b}$ denote its periodisation to $\mathbb R^d$ and let $\Bar{\Gamma}_b(t,x;s,y)$ denote the fundamental solution of
$$
\partial_t v_t
=
\gamma\Delta v_t+\Bar{b}_t\cdot\nabla v_t
$$
on $\mathbb R^d$ \cite{AronsonFundSol}. Since the diffusion coefficient is uniformly elliptic and the drift $b$ is bounded uniformly by $M_2$, the seminal result by \citet{AronsonBounds} gives that
$$
\Bar{\Gamma}_b(t,x;s,y)
\geq
\const (t-s)^{-d/2}
\exp\left(-\const\frac{|x-y|^2}{t-s}-\const(t-s)\right)
$$
for $0\leq s<t\leq T$, with constants depending only on $T,d,\gamma$ and $M_2$.  We reduce the periodic case to this estimate. W.l.og., let $\bar x,\bar y\in[0,1)^d$ be representatives of $x,y\in\mathbb T^d$ and choose $k_0\in\mathbb Z^d$ such that
$$
|\bar x-\bar y-k_0|
\leq
\operatorname{diam}(\mathbb T^d).
$$
Then, the periodised kernel satisfies
$$
\Gamma_b(t,x;s,y)
=
\sum_{k\in\mathbb Z^d}\bar{\Gamma}_b(t,\bar x;s,\bar y+k)
\geq
\bar \Gamma(t,\bar x;s,\bar y+k_0).
$$
Thus, we obtain the Gaussian lower bound
$$
\Gamma_b(t,x;s,y)
\geq
\const (t-s)^{-d/2}
\exp\left(
-\const\frac{|\bar x-\bar y-k_0|^2}{t-s}
-\const(t-s)
\right).
$$
For $t-s\in[\delta,T]$, the right hand side is bounded from below by a strictly positive constant $\kappa_\delta$.
Now, since $v_t$ represented by this fundamental solution, we obtain for $t \geq \delta$ that
$$
v_t(x)
=
\int_{\mathbb T^d}\Gamma_b(t,x;0,y)v_0(y) \d y
\geq \kappa_\delta \int_{\mathbb T^d}u_0(y) \d y.
$$
Transforming back to $u_t=e^{-M_1t}v_t$ yields the claim.
\end{proof}

\begin{proof}[Proof of Proposition \ref{WeakApproxSystemExistence}]
Similar to the proof of Prop. \ref{prop:WeakSolStationaryTruncExistence}, existence and uniqueness of martingale solutions $(\phi^\tau, \bm c^\tau)$ of \eqref{eq:WeakSolTruncFullSystem} with respect to the Gelfand triple $$H^{1,2}\oplus {H}^{1,2} \hookrightarrow L^2 \oplus {L}^2 \hookrightarrow(H^{1,2}\oplus {H}^{1,2})^\ast$$ can be derived by Theorem 5.1.3 in \cite{LiuRoeckner2015}. The boundedness properties $\phi^\tau_t \in \mathcal X_\phi$, $\bm c^\tau_t \in \mathcal X_{\bm c}$ follow analogously to the previous sections and finally, Proposition \ref{DecoupledMembEqExistence} shows the claimed regularity $$\norm{\phi^\tau}_{L^\infty(\Omega;C([0,T];H^{1,2}))} \leq \const \norm{\phi_0}_{L^\infty(\Omega;H^{1,2})}$$ of solutions $\phi_t$, given a probability space large enough to contain the initial distributions and a countable collection of independent Brownian motions.

We now proceed by a compactness argument which shows that the distributions of these solutions are tight, given suitable initial conditions. We only sketch the argument, as the argument is similar in spirit to the proof of Theorem \ref{StationaryExistence}. By Lemma \ref{DecoupledMembEqExistence} and the subsequent corollary, we find that
$$
\phi^\tau \in L^\infty(\Omega;C([0,T];H^{1,2})\cap W^{1,2}([0,T];L^2)) 
$$ 
takes values in a compact subset of $C([0,T];L^2)$ since \begin{equation}\label{eq:compactphiembedding}
C([0,T];H^{1,2})\cap W^{1,2}([0,T];L^2) \hookrightarrow C([0,T];L^2)
\end{equation} compactly:
\begin{enumerate}[(i)]
    \item Pointwise relative compactness is clear by the compact embedding $H^{1,2} \hookrightarrow L^2$.
    \item Equicontinuity then follows from the uniform Hölder bound.
\end{enumerate}
Further, uniform boundedness of $\phi^\tau\in L^\infty([0,T];H^{1,2}) $ implies that the Itô-formula for $\norm{\bm c^\tau}^2_{{L}^2}$ can be rearranged to derive a uniform bound on $$\mathbb E\left[\int_0^T \norm{\nabla \bm c^\tau_t}^2_{{L}^2} \d t\right]< \infty.$$ By inspecting the unbounded variation term $M^\tau$ of $\bm c^\tau \in C([0,T];({H}^{1,2})^\ast)$, we finally find that $$\mathbb E \left[\norm{\bm c^\tau}_{W^{\alpha,1}([0,T];({H}^{1,2}_b)^\ast)}\right], \mathbb E \left[\norm{M^\tau}_{W^{\alpha,p}([0,T];{L}^2)}\right] < \infty$$ uniformly in $\tau$, for arbitrary $\alpha \in (0,\frac12)$, $p > 1$. 

Arguing as in Section \ref{subsec:WeakSolStationarySect}, we find that $(\mathcal L(\phi^{\tau},\bm c^{\tau}_0, \bm c^{\tau}, M^{\tau}))_{\tau > 0}$ is uniformly tight in \begin{equation}
\begin{aligned}
\mathcal A = &C([0,T];L^2) \cap L^\infty_{w^\ast}([0,T];H^{1,2}) \cap L^2_w([0,T];H^{2,2}) \times {L}^2\\ &\quad \times L^1([0,T];{L}^2)  \cap L^2_w([0,T];{H}^{1,2}) \times C([0,T];({H}^{1,2})^\ast) \cap L^\infty_{w^\ast}([0,T];{L}^2)
\end{aligned}
\end{equation} and by the Jakubowski--Skorokhod representation theorem, we obtain a probabiliy space $\Omega'$ and random variables, for the sake of simplicity again denoted $(\phi^{\tau_k},\bm c^{\tau_k}_0, \bm c^{\tau_k}, M^{\tau_k})_{n \in \mathbb N}$, such that $(\phi^{\tau_k}, \bm c^{\tau_k}_0, \bm c^{\tau_k}, M^{\tau_k}) \overset{\mathcal A}{\to} (\phi^\epsilon,\bm c^\epsilon_0, \bm c^\epsilon, M^\epsilon),$ $\mathbb P'$-almost surely for random variables $(\phi^\epsilon,\bm c^\epsilon_0, \bm c^\epsilon,M^\epsilon) \in \mathcal A$. 

Inspection of the parabolic equation satisfied by $\phi^{\tau_k}-\phi^{\tau_m}$, $k,m \geq 1$, actually yields that $(\phi^{\tau_k})_{k \geq 1}$ is Cauchy in $L^2([0,T];H^{1,2})$: To ease notation, let 
$$
\xi^{(k)}_t \coloneqq g(\phi^{\tau_k}_t,\bm c^{\tau_k}_t) + \Psi(\phi^{\tau_k}_t,\bm c^{\tau_k}_t,\nabla^{\tau_k} \phi^{\tau_k}_t) \in L^2([0,T];L^2).
$$ Then, since $(\xi^{(k)}_t)_{k \geq 1}$ is uniformly bounded in $L^2$ and $(\phi^{\tau_k})_{k \geq 1}$ is Cauchy in $L^2([0,T];L^2)$, $$\begin{aligned}
&\norm{\phi^{\tau_k}_T-\phi^{\tau_m}_T}^2_{L^2} + \gamma \int_0^T \norm{\nabla(\phi^{\tau_k}_t-\phi^{\tau_m}_t)}^2_{L^2}\d t \\
&\leq \norm{\phi^{\tau_k}_0-\phi^{\tau_m}_0}^2_{L^2} + \int_0^T (\xi^{(k)}_t-\xi^{(m)}_t)(\phi^{\tau_k}_t-\phi^{\tau_m}_t)\d t \to 0.
\end{aligned}$$ Now, as in previous proofs, the boundedness properties of the sequence imply uniform integrability and the stronger convergence $$(\phi^{\tau_k}, \bm c^{\tau_k}) \overset{L^2(\Omega';L^2([0,T];H^{1,2} \oplus {L}^2))}{\to} (\phi^\epsilon, \bm c^\epsilon).$$ holds. In particular, this means we obtain $\mathbb P' \otimes \d t \otimes \d {\bm x}$-almost sure convergence of a subsequence of $(\phi^{\tau_k}_1, \nabla \phi^{\tau_k}_1, \bm c^{\tau_k}_1)$. Without loss of generality, denote this subsequence again by $(\phi^{\tau_k}, \bm c^{\tau_k})$.

The identification of the evolution equation satisfied by $\phi$ is straightforward by these rather strong convergence and regularity properties. As in the previous section, we show that for almost every $\omega \in \Omega'$, there exists a $\d t$-null set $\mathcal N_\omega \subset [0,T]$ such that \begin{equation} \label{CoupledApproxVariationalIdentity}
\langle \bm c^\epsilon_t, \bm v \rangle = \langle \bm c^\epsilon_0, \bm v\rangle + \int_0^t- \langle \bm D\nabla\bm c^\epsilon_s, \nabla \bm v \rangle + \left \langle\bm D \frac{\nabla \bm c^\epsilon_s\nabla \phi^\epsilon_s}{\phi^\epsilon_s+\epsilon}, \bm v \right \rangle + \langle  f(\phi^\epsilon_s, \bm c^\epsilon_s), \bm v \rangle \d s + \langle M^\epsilon_t, \bm v\rangle,
\end{equation} for all $\bm v \in {H}^{1,2}_b$ and $t \notin  \mathcal N_\omega$. Except for the term involving the truncated logarithmic derivative, it is standard to identify the almost sure limits of the constituent terms of the approximation sequence $\langle \bm c^{\tau_k}, \bm v\rangle$. We now show that a subsequence of $(\tau_k)_{k \geq 1}$ exists such that almost surely, 
$$\int_0^t\int \nabla^{\tau_k} \bm c^{\tau_k}_s \frac{\nabla^{\tau_k} \phi^{\tau_k}_s}{\phi^{\tau_k}_s + \epsilon} \bm v \d {\bm x} \d s\to \int_0^t \int \nabla \bm c^\epsilon_s \frac{\nabla \phi^\epsilon_s}{\phi^\epsilon_s + \epsilon} \bm v \d {\bm x} \d s$$ for arbitrary $t > 0$ and $\bm v \in {H}^{1,2}_b,$
We decompose this integral into the sum
$$
\begin{aligned}
\int_0^t\int \nabla^{\tau_k} \bm c^{\tau_k}_s \frac{\nabla^{\tau_k} \phi^{\tau_k}_s}{\phi^{\tau_k}_s + \epsilon} \bm v \d {\bm x} \d s
&= \int_0^t\int (\nabla^{\tau_k} \bm c^{\tau_k}_s - \nabla \bm c^{\tau_k}_s)\frac{\nabla^{\tau_k} \phi^{\tau_k}_s}{\phi^{\tau_k}_s + \epsilon} \bm v \d {\bm x} \d s  \\ & \quad + \int_0^t\int \nabla \bm c^{\tau_k}_s \frac{\nabla^{\tau_k} \phi^{\tau_k}_s}{\phi^{\tau_k}_s + \epsilon} v \d {\bm x} \d s
\end{aligned}
$$
Convergence of the second integral follows as $\nabla \bm c^{\tau_k}$ almost surely converges weakly, while $\frac{\nabla^{\tau_k} \phi^{\tau_k}}{\phi^{\tau_k}_t+\epsilon}$ converges strongly. To show the the first integral vanishes, we notice that for $\nu > 1$,
$$\begin{aligned}
\mathbb E\left[\left| \int_0^t\int (\nabla^{\tau_k} \bm c^{\tau_k}_s - \nabla \bm c^{\tau_k}_s)\frac{\nabla^{\tau_k} \phi^{\tau_k}_s}{\phi^{\tau_k}_s + \epsilon} \bm v \d {\bm x} \d s\right| \right]&\leq \frac{\norm{\bm v}_{{L}^\infty}}{\epsilon} \mathbb E\left[\int_0^T \int |\nabla^{\tau_k} \bm c^{\tau_k}_s - \nabla \bm c^{\tau_k}_s| |\nabla^{\tau_k} \phi^{\tau_k}| \d {\bm x} \d t \right] \\
& \leq \frac{\const\norm{\bm v}_{{H}^{1,2}_b}}{\epsilon} \norm{\nabla^{\tau_k} \bm c^{\tau_k}_s - \nabla \bm c^{\tau_k}_s}_{L^{\frac{\nu}{\nu-1}}} \norm{\nabla \phi^{\tau_k}}_{L^{\nu}},
\end{aligned}$$ 
where the $L^p$-norms are taken with respect to probability, space and time. We need to verify that $\norm{\nabla \phi^{\tau_k}}_{L^{\nu}}$ is finite for relevant values of $\nu$. Choose $\nu > 2$, so that $\frac{\nu}{\nu-1}<2$. Without loss of generality, assume that $d \geq 2$; the corresponding argument for $d = 1$ follows by the stronger Sobolev embeddings. Importantly, for $\nu \leq \frac{2d}{d-2}$, the Gagliardo-Nirenberg inequality implies that 
$$
\norm{\nabla \phi_t}_{L^\nu} \leq \norm{\Delta \phi_t}^{\theta_\nu}_{L^2} \norm{\nabla \phi_t}^{1-\theta_\nu}_{L^2}
$$ with $\theta_\nu =d(\frac12-\frac1\nu) \to 0$ as $\nu \to 2$. This gives the desired regularity: We have $$\phi_t \in L^\infty(\Omega;L^\infty([0,T];H^{1,2}) \cap L^2([0,T];H^{2,2})),$$ and thus $\nabla \phi_t \in L^\infty(\Omega;L^\frac{2}{\theta_\nu}([0,T];L^\nu))$, with $\frac{2}{\theta_\nu} \to \infty$ as $\nu \to 2$. Moreover, by the uniform a-priori bounds, the Markov inequality shows that  $$(\mathbb P'\otimes \d t \otimes \d {\bm x})\left(\Set{ |\nabla \bm c^{\tau_k}| \geq {\tau_k}}\right) \leq \frac{\const}{{\tau_k}^2}$$ and by that can conclude that
$$
\begin{aligned}
 &\mathbb E' \left[\int_0^T \int |\nabla^{\tau_k} \bm c^{\tau_k}_t- \nabla \bm c^{\tau_k}_t|^{\frac{\nu}{\nu-1}} \d {\bm x} \d t \right]\\ &= \mathbb E' \left[\int_0^T \int (|\nabla \bm c^{\tau_k}_t|-{\tau_k})^{\frac{\nu}{\nu-1}} \mathds{1}_{\Set{|\nabla \bm c^{\tau_k}| \geq {\tau_k}}} \d {\bm x} \d t \right]
\\ & \leq \mathbb E' \left[ \norm{\nabla \bm c^{\tau_k}}^2_{L^2([0,T] \times \mathbb T^d)}\right]^\frac q2 \cdot (\mathbb P' \otimes \d t \otimes \d {\bm x})^{\frac{\nu-2}{2(\nu-1)}}\left(\Set{|\nabla \bm c^{\tau_k}| \geq {\tau_k}}\right).
\end{aligned}
$$
The right hand side converges to $0$, which in particular proves convergence of the sequence of integrals to $0$ in $L^{1}(\Omega')$. Thus we can choose a subsequence such that this expression converges almost surely to $0$ and we see that the limit satisfies the right expression.

The properties of the martingale $M^\epsilon_t$ follow as in previous proofs, and similarly does the claimed regularity.
\end{proof}
\begin{lemma} \label{arbitraryphibound}
For $\bm c \in L^\infty(\Omega \times [0,T]\times \mathbb T^d)$ and an initial condition $\phi_0$ with $ \mathbb P(\phi_0 \in \mathcal X_\phi) = 1$, $$\alpha \int_0^T \norm{\frac{|\nabla \phi_t|}{\phi^{1-\alpha}_t}}^2_{L^2} \d t \in L^\infty(\Omega)$$ uniformly in $\alpha \in (0,\frac12)$. In particular, $(\phi^\alpha)_{\alpha \in (0,\frac12)} \subset L^\infty(\Omega;L^2([0,T]; H^{1,2}))$ is uniformly bounded. If, additionally, $\log \phi_0 \in L^1(\Omega;L^1),$ then 
$$
\sup_{0 \leq t \leq T} \norm{\log \phi_t}_{L^1}+\int_0^T \norm{\frac{|\nabla \phi_t|}{\phi_t}}^2_{L^2} \d t \in L^1(\Omega).
$$ 
\end{lemma}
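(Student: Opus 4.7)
The plan is to establish both bounds by testing the $\phi$-equation $\partial_t \phi = \gamma\Delta\phi + g + \Psi$ with suitably regularised nonlinear functions of $\phi_t$. The crucial regularity $\phi\in L^\infty(\Omega; C([0,T];H^{1,2})\cap L^2([0,T];H^{2,2}))$ inherited from Lemma~\ref{DecoupledMembEqExistence} (cf.\ Remark~\ref{LimitRemark}) together with $\partial_t \phi\in L^2$ justifies the chain rule for smooth functions of $\phi_t + \delta$, $\delta>0$. The bounds independent of $\delta$ will then transfer to the singular quantities via Fatou's lemma (for the dissipation) and monotone convergence (for the boundary terms).

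For the first claim, apply the chain rule to $F_\alpha(\phi_t+\delta) = \frac{(\phi_t+\delta)^{2\alpha}}{2\alpha}$, which is smooth on $[\delta,1+\delta]$ with $F_\alpha' = (\phi+\delta)^{2\alpha-1}$ and $F_\alpha'' = (2\alpha-1)(\phi+\delta)^{2\alpha-2}$. Using the equation and integrating by parts on the Laplacian produces
$$\frac{d}{dt}\int \frac{(\phi_t+\delta)^{2\alpha}}{2\alpha}\d x = \gamma(1-2\alpha)\int\frac{|\nabla\phi_t|^2}{(\phi_t+\delta)^{2(1-\alpha)}}\d x + \int (\phi_t+\delta)^{2\alpha-1}(g+\Psi)\d x.$$
The $g$-term is controlled by setting $\phi_2\equiv 0$ in the Lipschitz estimate of Assumption~\ref{NemytskiiAssumption} and using $g(0,\bm c)=0$ to obtain $|g(\phi,\bm c)(x)|\leq C\phi(x)$; this yields $|\int(\phi+\delta)^{2\alpha-1}g|\leq C\int(\phi+\delta)^{2\alpha}\leq C|\mathbb T^n|$. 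For $\Psi$, the structural bound $|\Psi|\leq C|\nabla\phi|$ (consistent with the form of the subsolution used in Proposition~\ref{Subsolution}) combined with Young's inequality gives
$$(\phi+\delta)^{2\alpha-1}|\nabla\phi| \leq \frac{1}{2\varepsilon}(\phi+\delta)^{2\alpha} + \frac{\varepsilon}{2}\frac{|\nabla\phi|^2}{(\phi+\delta)^{2(1-\alpha)}},$$
and for a suitably small $\varepsilon$, the second term absorbs into the dissipation. Integrating in time, bounding the LHS by $(1+\delta)^{2\alpha}|\mathbb T^n|/(2\alpha)$, multiplying through by $2\alpha$, and letting $\delta\to 0$ via Fatou on the dissipation produces the asserted bound $\alpha\int_0^T\norm{\nabla\phi_t/\phi_t^{1-\alpha}}_{L^2}^2\d t \leq C$ almost surely.

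For the second claim, assume additionally $\log\phi_0\in L^1$. Apply the chain rule to $-\log(\phi_t+\delta)$ to obtain
$$\frac{d}{dt}\int \log\frac{1}{\phi_t+\delta}\d x + \gamma\int\frac{|\nabla\phi_t|^2}{(\phi_t+\delta)^2}\d x = -\int \frac{g+\Psi}{\phi_t+\delta}\d x.$$
The RHS is controlled using $|g|/(\phi+\delta)\leq C$ (a direct consequence of $|g|\leq C\phi$) and Young's inequality on $|\Psi|/(\phi+\delta)\leq C|\nabla\phi|/(\phi+\delta)$ to absorb half of the dissipation. Integrating in time and applying monotone convergence to the term $\log(1/(\phi_0+\delta))\uparrow \log(1/\phi_0)\in L^1$ together with Fatou on the LHS produces the claimed estimate $\sup_{t\leq T}\norm{\log\phi_t}_{L^1}+\int_0^T\norm{\nabla\phi_s/\phi_s}_{L^2}^2\d s\leq \norm{\log\phi_0}_{L^1}+CT|\mathbb T^n|$.

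The main obstacle is handling the singular factor $(\phi+\delta)^{2\alpha-1}$ (respectively $1/(\phi+\delta)$) in the reaction contribution uniformly in $\delta>0$, which is delicate precisely when $\phi$ vanishes on sets of positive measure. This is made possible only by the joint structural features that (i) $g$ vanishes linearly at $\phi=0$, giving an exact cancellation of the singularity in the $g$-term, and (ii) $\Psi$ is linear in $\nabla\phi$ (rather than containing a $\phi$-independent constant part), so that Young's inequality delivers an absorption into the strictly positive dissipation term $\gamma(1-2\alpha)\int|\nabla\phi|^2/(\phi+\delta)^{2(1-\alpha)}$ with constants independent of $\delta$.
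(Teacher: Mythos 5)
Your proposal is correct and follows essentially the same route as the paper: testing the $\phi$-equation with the regularised concave power $(\phi_t+\delta)^{2\alpha-1}$ (the paper uses $1-(\phi+\epsilon)^{\beta}$ with $\beta=2\alpha$, which is the same computation up to normalisation), bounding the $g$-term via $|g|\leq C\phi$ from Assumption \ref{NemytskiiAssumption}, absorbing the $\Psi$-term with $|\Psi|\leq C|\nabla\phi|$ and Young's inequality into the dissipation, and passing to the limit by Fatou; the logarithmic case is handled identically in both arguments. No substantive differences.
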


\begin{proof}
We prove the statement with the auxiliary variable $0 < \beta \coloneqq 2\alpha < 1$. By the chain rule, we obtain the identity
$$
\begin{aligned}
&\int (1-(\phi_t+\epsilon)^\beta)\d {\bm x} - \int (1-(\phi_0+\epsilon)^\beta) \d {\bm x} \\&= -\beta\left(-(1-\beta)\int_0^t \int \frac{|\nabla \phi_s|^2}{(\phi_s+\epsilon)^{2-\beta}}\d {\bm x} \d s + \int_0^t \int  \frac{\Psi(\phi_s, \bm c_s, \nabla \phi_s)}{(\phi_s+\epsilon)^{1-\beta}} \d {\bm x} + \int_0^t \int \frac{g(\phi_s, \bm c_s)}{(\phi_s+\epsilon)^{1-\beta}} \d {\bm x} \d s  \right).
\end{aligned}
$$
Now since $|\Psi(\phi_s, \bm c_s, \nabla \phi_s)| \leq \const |\nabla \phi_s|$, we observe that $$\left \lvert \int  \frac{\Psi(\phi_s, \bm c_s, \nabla \phi_s)}{(\phi_s+\epsilon)^{1-\beta}} \d {\bm x}\right \rvert \leq C_{\themycounter}\int  \frac{\lvert\nabla \phi_s\rvert}{(\phi_s+\epsilon)^{1-\beta}} \d {\bm x} \leq \frac{1-\beta}{2} \int  \frac{\lvert\nabla \phi_s\rvert^2}{(\phi_s+\epsilon)^{2-\beta}} \d {\bm x} + \frac{2C^2_{\themycounter}}{1-\beta} \int (\phi_s+\epsilon)^\beta \d {\bm x}.$$ Further, the boundedness of $g(\phi_s, \bm c_s)/\phi_s$ (cf. Assumption \ref{ass:WeakSolNemytskiiAssumption}) implies boundedness of the third term on the right hand side. Thus, after rearrangement, we find that 
$$
\beta \int_0^t \int \frac{|\nabla \phi_s|^2}{(\phi_s+\epsilon)^{2-\beta}}\d {\bm x} \d s 
=  \beta \int_0^T  \norm{\frac{|\nabla \phi_t|}{\phi^{1-\beta/2}_t}}^2_{L^2}  
\d t  \leq \const, 
$$ 
for some constant $C_{\themycounter}$ independent of $\epsilon$. Application of Fatou's Lemma to take the limit $\epsilon \to 0$ concludes the proof. If, additionally, $\log \phi_0 \in L^1$, then we can repeat the above computation for $\log \frac{1}{\phi_t + \epsilon}$ and rearrange accordingly to obtain the claimed a priori estimate.
\end{proof}

\begin{lemma} \label{LastApriori}
Let a filtered probability space $(\Omega, \mathcal F, (\mathcal F_t)_{t \geq 0}, \mathbb P)$ supporting a collection of solutions $(\phi^\epsilon, c^{\epsilon})_{\epsilon>0}$ of the Martingale problem associated with equation \eqref{CoupledTruncTruncSys} be given. For any fixed $\alpha \in (0,\frac12)$, there exists $p > 1$  such that $((\phi^\epsilon +\epsilon)^\alpha\bm c^\epsilon)_{\epsilon > 0}$ is uniformly bounded in  $$L^2(\Omega,L^2([0,T];{H}^{1,2})) \cap L^p(\Omega; W^{\beta,p}([0,T];({H}^{1,2}_b)^\ast))$$ and  $$L^p(\Omega;W^{1,p}([0,T];({H}^{1,2}_b)^\ast)) + L^q(\Omega;W^{\beta,q}([0,T];({H}^{1,2}_b)^\ast)),$$ respectively, for any $\beta \in (0,\frac12)$ and $q > \frac{1}{\beta}$. If, additionally, $\log \phi_0 \in L^\infty(\Omega;L^1)$, then $$(c^\epsilon)_{\epsilon > 0} \subset L^1(\Omega;L^2([0,T];{H}^{1,2}) \cap W^{\alpha,1}([0,T];({H}^{1,2}_b)^\ast))$$ is uniformly bounded. 
\end{lemma}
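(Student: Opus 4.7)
The plan is to mirror the structure of the proof of Proposition \ref{stationaryApriori}, with the product weight $(\phi^\epsilon+\epsilon)^{2\alpha}$ replacing the linear weight $(\phi^\epsilon+\epsilon)$, and to exploit Lemma \ref{arbitraryphibound} at each place where a uniform-in-$\epsilon$ control on the singular gradient ratio is needed. First I would apply the product formula of Lemma \ref{MyProdIto} to $\xi^\epsilon_t \coloneqq (\phi^\epsilon_t+\epsilon)^\alpha \bm c^\epsilon_t$ and then the Itô formula of Lemma \ref{MySquareIto} to $\|\xi^\epsilon_t\|_{\bm L^2}^2$. The key cancellation happens between the integration-by-parts on $2\langle (\phi^\epsilon+\epsilon)^{2\alpha} \bm c^\epsilon, \bm D \Delta \bm c^\epsilon \rangle$ and the logarithmic drift $2\langle (\phi^\epsilon+\epsilon)^{2\alpha} \bm c^\epsilon, \bm D \nabla \bm c^\epsilon \nabla \phi^\epsilon/(\phi^\epsilon+\epsilon)\rangle$, which leaves the dissipation $-2\|(\phi^\epsilon+\epsilon)^\alpha \bm D^{1/2}\nabla \bm c^\epsilon\|^2_{\bm L^2}$ and a cross term proportional to $(1-2\alpha)\int (\phi^\epsilon+\epsilon)^{2\alpha-1}\, \bm c^\epsilon \cdot \bm D \nabla \bm c^\epsilon \nabla \phi^\epsilon\,\d x$. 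By Young's inequality this cross term can be split into a fraction of the dissipation (to be absorbed on the left) plus $C_\alpha \|\bm c^\epsilon\|_{\bm L^\infty}^2 \cdot \|(\phi^\epsilon+\epsilon)^{\alpha-1}\nabla \phi^\epsilon\|^2_{L^2}$, and the latter is integrable in time with bound $C/\alpha$ by Lemma \ref{arbitraryphibound}. The temporal derivative contribution $2\alpha \langle (\phi^\epsilon+\epsilon)^{2\alpha-1} \partial_t \phi^\epsilon, |\bm c^\epsilon|^2\rangle$ is decomposed using $\partial_t \phi^\epsilon = \gamma \Delta \phi^\epsilon + g + \Psi$: the $\Delta \phi^\epsilon$ piece is integrated by parts to yield $2\alpha \gamma(1-2\alpha) \int (\phi^\epsilon+\epsilon)^{2\alpha-2}|\nabla \phi^\epsilon|^2 |\bm c^\epsilon|^2$ (again controllable by Lemma \ref{arbitraryphibound} up to a factor $1/\alpha$) plus a term mixing $\nabla |\bm c^\epsilon|^2$ with $\nabla \phi^\epsilon$ that is again absorbed via Young, while the $g$ and $\Psi$ contributions are bounded using Assumption \ref{NemytskiiAssumption} (which gives $|g|\leq C\phi$, hence $(\phi^\epsilon+\epsilon)^{2\alpha-1} g$ is bounded) and the linear growth of $\Psi$ in $\nabla \phi^\epsilon$.

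Taking expectations and combining with the deterministic bounds $\phi^\epsilon \in \mathcal X_\phi$, $\bm c^\epsilon \in \mathcal X_{\bm c}$, and $\|\sqrt Q\|_{HS} < \infty$, this yields
\begin{equation*}
\mathbb E\left[\int_0^T \|(\phi^\epsilon_t+\epsilon)^\alpha \nabla \bm c^\epsilon_t\|^2_{\bm L^2}\d t\right] \leq C_\alpha
\end{equation*}
uniformly in $\epsilon$. Since $\nabla \xi^\epsilon_t = \alpha (\phi^\epsilon_t+\epsilon)^{\alpha-1}\nabla \phi^\epsilon_t \cdot \bm c^\epsilon_t + (\phi^\epsilon_t+\epsilon)^\alpha \nabla \bm c^\epsilon_t$, a second application of Lemma \ref{arbitraryphibound} to the first summand gives the required $L^2(\Omega;L^2([0,T];\bm H^{1,2}))$ bound. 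For the time regularity I would split $\xi^\epsilon_t = \xi^\epsilon_0 + A^\epsilon_t + N^\epsilon_t$ into the drift and martingale parts according to the variational equation for $\xi^\epsilon$ obtained from Lemma \ref{MyProdIto}. Tested against $\bm v \in \bm H^{1,2}_b$, the drift integrand collects the terms $\langle (\phi^\epsilon+\epsilon)^\alpha \bm D \nabla \bm c^\epsilon, \nabla \bm v\rangle$, $(1-\alpha)\langle (\phi^\epsilon+\epsilon)^{\alpha-1}\nabla \phi^\epsilon \cdot \bm D \nabla \bm c^\epsilon, \bm v\rangle$, $\langle (\phi^\epsilon+\epsilon)^\alpha \bm f, \bm v\rangle$ and $\alpha \langle(\phi^\epsilon+\epsilon)^{\alpha-1}\partial_t \phi^\epsilon\, \bm c^\epsilon, \bm v\rangle$; each of these is in $L^p([0,T])$ as a function of time (for a suitable $p > 1$) by the spatial bounds just derived, Cauchy–Schwarz and the $L^2$-time integrability of $(\phi^\epsilon+\epsilon)^{\alpha-1}\nabla\phi^\epsilon$ (after handling $\Delta \phi^\epsilon$ inside $\partial_t \phi^\epsilon$ by the same integration-by-parts trick). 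This yields the $L^p(\Omega;W^{1,p}([0,T];(\bm H^{1,2}_b)^\ast))$-bound on $A^\epsilon$. For the stochastic part $N^\epsilon = \int_0^\cdot (\phi^\epsilon_s+\epsilon)^\alpha \d M^\epsilon_s$, the Flandoli–Gątarek maximal inequality from \cite{Flandoli1995MartingaleAS} (already used in Proposition \ref{stationaryApriori}) gives uniform bounds in $L^q(\Omega;W^{\beta,q}([0,T];\bm L^2))$ for every $\beta<1/2$ and $q\geq 2$, since $(\phi^\epsilon+\epsilon)^\alpha \bm b_\eta(\phi^\epsilon,\bm c^\epsilon)\sqrt Q$ is bounded in $HS$.

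Finally, for the last assertion under $\log\phi_0 \in L^\infty(\Omega;L^1)$, Lemma \ref{arbitraryphibound} in its strengthened form provides $\int_0^T \||\nabla\phi^\epsilon_t|/\phi^\epsilon_t\|^2_{L^2}\d t \in L^1(\Omega)$, so the preceding estimates can be run with $\alpha=0$ (i.e. directly for $\bm c^\epsilon$ instead of $(\phi^\epsilon+\epsilon)^\alpha \bm c^\epsilon$), at the cost of replacing squared bounds by first-moment bounds, which yields the claimed $L^1(\Omega;L^2([0,T];\bm H^{1,2})\cap W^{\alpha,1}([0,T];(\bm H^{1,2}_b)^\ast))$ control. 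The main obstacle I expect is the $\partial_t\phi^\epsilon$ contribution in the time-regularity bookkeeping: the factor $(\phi^\epsilon+\epsilon)^{\alpha-1}$ is singular as $\epsilon\to 0$, and only the combination of the $H^{2,2}$-regularity of $\phi^\epsilon$ from Lemma \ref{DecoupledMembEqExistence}, the rewriting $\partial_t\phi^\epsilon = \gamma\Delta\phi^\epsilon + g + \Psi$, and integration by parts against the test function produces a pairing that can be estimated using only the integrals controlled by Lemma \ref{arbitraryphibound}; getting the exponents right here, and in particular identifying the admissible pair $(p,\beta)$ and $q$, is the delicate bookkeeping step.
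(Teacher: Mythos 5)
Your overall strategy is the same as the paper's: apply Lemma \ref{MyProdIto} and Lemma \ref{MySquareIto} to the weighted process, absorb the cross term $(\phi^\epsilon+\epsilon)^{\alpha-1}\bm c^\epsilon\,\nabla\bm c^\epsilon\nabla\phi^\epsilon$ into the dissipation via Young, handle the time derivative of the weight through $\partial_t\phi^\epsilon=\gamma\Delta\phi^\epsilon+g+\Psi$ and Lemma \ref{arbitraryphibound}, use the Flandoli--G\c{a}tarek estimate for the martingale part, and run the unweighted version for the $\log\phi_0\in L^\infty(\Omega;L^1)$ case. (Working with the weight $(\phi^\epsilon+\epsilon)^{2\alpha}$ rather than the paper's $(\phi^\epsilon+\epsilon)^{\alpha}$ in the energy identity is immaterial.) Up to and including the $L^2(\Omega;L^2([0,T];\bm H^{1,2}))$ bound and the $W^{\beta,q}$ bound on the stochastic integral, your argument matches the paper.

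The genuine gap is in the claim that the bounded-variation part is controlled in $W^{1,p}([0,T];(\bm H^{1,2}_b)^\ast)$ for some $p>1$. The critical drift term is $(\phi^\epsilon+\epsilon)^{\alpha-1}\nabla\bm c^\epsilon\nabla\phi^\epsilon$, which factors as the product of $(\phi^\epsilon+\epsilon)^{\alpha/2-1}|\nabla\phi^\epsilon|$ and $(\phi^\epsilon+\epsilon)^{\alpha/2}|\nabla\bm c^\epsilon|$; each factor is uniformly bounded in $L^2$ of space--time (the first by Lemma \ref{arbitraryphibound}, the second by the energy estimate just derived), but Cauchy--Schwarz then only yields an $L^1$-in-time bound on the $(\bm H^{1,2}_b)^\ast$-norm, i.e.\ a $W^{1,1}$-type bound, not $L^p([0,T])$ with $p>1$ as you assert ("by the spatial bounds just derived, Cauchy--Schwarz and the $L^2$-time integrability of $(\phi^\epsilon+\epsilon)^{\alpha-1}\nabla\phi^\epsilon$"). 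The paper closes exactly this gap by a separate argument: it first upgrades $\nabla\phi^\epsilon$ to $L^\nu(\Omega\times[0,T]\times\mathbb T^n)$ for some $\nu>2$ close to $2$, via Gagliardo--Nirenberg interpolation between the uniform $L^\infty_t H^{1,2}$ and $L^2_t H^{2,2}$ bounds of Lemma \ref{DecoupledMembEqExistence}, then writes $\frac{|\nabla\phi^\epsilon|}{(\phi^\epsilon+\epsilon)^{1-\alpha/2}}=\frac{|\nabla\phi^\epsilon|^{1-a}}{(\phi^\epsilon+\epsilon)^{1-\alpha/2}}\,|\nabla\phi^\epsilon|^{a}$ with $0<a<\alpha/2$, applies H\"older with a carefully chosen exponent, and fixes $p=\frac{2\nu}{2\nu-(\nu-2)a}$ so that the remaining factor is again one of the quantities controlled by Lemma \ref{arbitraryphibound}; this is precisely why the lemma states "for any fixed $\alpha$ there exists $p>1$" with $p$ depending on $\alpha$. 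You do flag a delicate step at the end, but you locate it in the $\partial_t\phi^\epsilon$ contribution and propose integration by parts against the test function as the remedy; that integration by parts is already encoded in the variational identity \eqref{phiweighting} and by itself produces no integrability exponent above $1$ --- the missing ingredient is the higher space--time integrability of $\nabla\phi^\epsilon$ together with the interpolation/H\"older choice of exponents.
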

\begin{proof}
The second part of the statement follows by the previous Lemma combined with the usual a-priori inequality one obtains by rearranging the Itô formula applied to $\norm{\bm c^\epsilon_t}^2_{{L}^2}$.

We obtain the remaining a-priori bounds through the Itô formula applied to $(\phi^\epsilon+\epsilon)^\alpha \bm c^\epsilon$. A first application Lemma \ref{MyProdIto} yields the variational identity
\begin{equation} \label{phiweighting}
\begin{aligned}
&\d \langle (\phi_t^\epsilon+\epsilon)^\alpha\bm c^\epsilon_t, \bm v \rangle \\ &= \Big(- \langle \bm D (\phi_t^\epsilon+\epsilon)^{\alpha}\nabla \bm c^\epsilon_t, \nabla v  \rangle - \langle ((\bm D+ \gamma \bm I)\alpha-\bm D) (\phi_t^\epsilon+\epsilon)^{\alpha-1}   \nabla \bm c^\epsilon_t\nabla \phi^\epsilon_t , \bm v \rangle  \\ &\quad + \langle  (\phi_t^\epsilon+\epsilon)^{\alpha} f(\phi^\epsilon_t,\bm c^\epsilon_t), \bm v \rangle  - \alpha \gamma \langle (\phi_t^\epsilon+\epsilon)^{\alpha-1}  \bm c^\epsilon_t \otimes \nabla \phi^\epsilon_t , \nabla \bm v \rangle
\\ & \quad - \alpha(\alpha-1)\gamma \left \langle \frac{|\nabla \phi^\epsilon_t|^2}{(\phi^\epsilon_t+\epsilon)^{2-\alpha}} \bm c^\epsilon_t, \bm v\right \rangle + \alpha \langle (\phi_t^\epsilon+\epsilon)^{\alpha-1} \Psi(\phi^\epsilon_t, \bm c^\epsilon_t,\nabla \phi^\epsilon_t) \bm c^\epsilon_t, \bm v \rangle \\ & \quad + \alpha \langle (\phi_t^\epsilon+\epsilon)^{\alpha-1} g(\phi^\epsilon_t) \bm c^\epsilon_t , \bm v \rangle\Big) \d t + \langle \bm v, (\phi_t^\epsilon+\epsilon)^{\alpha} b(\phi^\epsilon_t,\bm c^\epsilon_t) \d {\bm W}^Q_t \rangle .
\end{aligned}.\end{equation}
for any $\bm v \in {H}^{1,2}_b$. Note that the quadratic covariation vanishes in this formula since $\phi^\epsilon$ is of bounded variation. With this variational identity at hand, we can apply the Itô type formula given by Lemma \ref{MySquareIto} to derive that
$$
\begin{aligned}
\frac12 \d \norm{(\phi_t + \epsilon)^{\alpha/2}\bm c^\epsilon_t}^2_{{L}^2} &=  \Bigg( - \norm{ \sqrt{\bm D}(\phi_t + \epsilon)^{\alpha/2} \nabla \bm c^\epsilon_t}^2_{{L}^2 \times {L}^2} - \langle ((\bm D+ \gamma \bm I)\alpha-\bm D) (\phi_t^\epsilon+\epsilon)^{\alpha-1}  \nabla \bm c^\epsilon_t \nabla \phi^\epsilon_t, \bm c^\epsilon_t \rangle  \\ & \quad + \langle  (\phi_t^\epsilon+\epsilon)^{\alpha} f(\phi^\epsilon_t, \bm c^\epsilon_t), \bm c^\epsilon_t \rangle  - \alpha \gamma \langle (\phi_t^\epsilon+\epsilon)^{\alpha-1}  \bm c^\epsilon_t \otimes \nabla \phi^\epsilon_t , \nabla \bm c^\epsilon_t \rangle
\\ & \quad - \frac12 \alpha(\alpha-1)\gamma \left \langle \frac{|\nabla \phi^\epsilon_t|^2}{(\phi^\epsilon_t+\epsilon)^{2-\alpha}} \bm c^\epsilon_t, \bm c^\epsilon_t \right \rangle + \frac12 \alpha \langle (\phi_t^\epsilon+\epsilon)^{\alpha-1} \Psi(\phi^\epsilon_t, \bm c^\epsilon_t,\nabla \phi^\epsilon_t) \bm c^\epsilon_t, \bm c^\epsilon_t \rangle \\ & \quad + \frac12 \alpha \langle (\phi_t^\epsilon+\epsilon)^{\alpha-1} g(\phi^\epsilon_t) \bm c^\epsilon_t , \bm c^\epsilon_t \rangle + \frac12 \sum_{k \geq 0} \lambda_n \norm{ (\phi_t+\epsilon)^{\alpha/2} b(\phi^\epsilon_t, \bm c^\epsilon_t) \bm e_n}^2_{{L}^2} \Bigg)\d t
\\&\quad+ \langle(\phi_t+\epsilon)^{\alpha} \bm c^\epsilon_t,   b(\phi^\epsilon_t, \bm c^\epsilon_t) \d {\bm W}^Q_t \rangle .
\end{aligned}
$$ After rearranging this equation, we exploit uniform boundedness of $\phi^\epsilon \in \mathcal X_\phi$, $\bm c^\epsilon \in \mathcal X_{\bm c}$ and $$\norm{\frac{\nabla \phi^\epsilon}{(\phi^\epsilon+\epsilon)^{1-\alpha/2}}}_{L^2}$$ to show that \begin{equation} \label{pboundlogderiv}
\mathbb E\left[\int_0^T\norm{(\phi_t + \epsilon)^{\alpha/2} \nabla \bm c^\epsilon_t}^2_{{L}^2 \times {L}^2} \right] \leq K_\alpha
\end{equation} for some constant $K_\alpha$ dependent on $\alpha$ but not on $\epsilon$. Obtaining the bound on the right hand side of the equation is tedious but mostly standard; the only noteworthy step is the estimate $$\begin{aligned}
\lvert\langle (\phi_t^\epsilon+\epsilon)^{\alpha-1}  \nabla \bm c^\epsilon_t \nabla \phi^\epsilon_t, \bm c^\epsilon_t \rangle\rvert &\leq \norm{c^\epsilon_t}_{{L}^\infty}\langle (\phi_t^\epsilon+\epsilon)^{\alpha/2-1}|\nabla \phi^\epsilon_t|, (\phi_t^\epsilon+\epsilon)^{\alpha/2}|\nabla \bm c^\epsilon_t| \rangle \\ &\leq \norm{c^\epsilon_t}_{{L}^\infty}\left(C_\delta \norm{\frac{\nabla \phi^\epsilon}{(\phi^\epsilon+\epsilon)^{1-\alpha/2}}}_{L^2}^2 + \delta \norm{(\phi_t + \epsilon)^{\alpha/2} \nabla \bm c^\epsilon_t}^2_{{L}^2 \times {L}^2} \right),
\end{aligned}$$
where we can choose $\delta \ll 1$ small enough to be absorbed by $\bm D$, owing to Young's inequality. As a consequence of the deterministic bounds on the norm of $(\phi^\epsilon+\epsilon)^\alpha \in L^2([0,T];H^{1,2})$, we know that $$\bm c^\epsilon \otimes  \nabla (\phi^\epsilon+\epsilon)^\alpha \in L^\infty(\Omega;L^2( [0,T];{L}^2))$$ is uniformly bounded in $\epsilon > 0$ and the desired estimate on the ${L}^{2}$-norm of $$\nabla ((\phi^\epsilon+\epsilon)^\alpha\bm c^\epsilon) = (\phi^\epsilon+\epsilon)^\alpha \nabla \bm c^\epsilon + \nabla(\phi^\epsilon+\epsilon)^\alpha \otimes \bm c^\epsilon$$ follows.

With this estimate at hand, we can control the norm of $((\phi^\epsilon+\epsilon)^\alpha \bm c^\epsilon)_{\epsilon > 0}$ in the space and $$W^{1,p}([0,T];({H}^{1,2}_b)^\ast) + W^{\beta,q}([0,T];({H}^{1,2}_b)^\ast)$$ for small $p > 1$ and any $q>2$ and $\beta \in (\frac1q,\frac12)$. Here, the unbounded variation part of $(\phi^\epsilon+\epsilon)^\alpha \bm c^\epsilon$ lives in the former space, while the stochastic integral lives in the latter. As this estimate additionally yields boundedness in $$W^{\beta,p}([0,T];({H}^{1,2}_b)^\ast),$$ we can apply both embeddings given by Lemma \ref{p1compactness}. The claimed estimate on the norm of $(\phi^\epsilon+\epsilon)^\alpha \bm c^\epsilon$ in $W^{1,p}$ follows by a (mostly standard) dual space estimate applied to the individual terms in \eqref{phiweighting}. However, once again, the estimate on the logarithmic derivative is quite involved. To derive the estimate in $W^{1,p}([0,T];({H}^{1,2}_b)^\ast)$, we show that $$\mathbb E\left[\int_0^T \norm{ (\phi_t^\epsilon+\epsilon)^{\alpha/2-1}|\nabla \phi^\epsilon_t|(\phi_t^\epsilon+\epsilon)^{\alpha/2}|\nabla \bm c^\epsilon_t|}^p_{L^p} \d t\right] < \infty$$ uniformly in $\epsilon > 0$ for small enough $p > 1$. The Young inequality for products, applied to $q = \frac{2}{p}$ and $q' = \frac{2}{2-p}$ shows that $$\int_0^T \norm{ (\phi_t^\epsilon+\epsilon)^{\alpha/2-1}|\nabla \phi^\epsilon_t|(\phi_t^\epsilon+\epsilon)^{\alpha/2}|\nabla \bm c^\epsilon_t|}^p_{L^p} \d t\leq \int_0^T \frac{1}{q}\norm{(\phi^\epsilon+\epsilon)^{\alpha/2} |\nabla c^\epsilon_t|}^2_{L^2}+\frac{1}{q'}\norm{\frac{|\nabla \phi^\epsilon|}{(\phi^\epsilon_t + \epsilon)^{1-\alpha/2}}}^{pq'}_{L^{pq'}} \d t.$$ Now, for unspecified $0 < a < \alpha/2$, decompose $$\frac{|\nabla \phi^\epsilon|}{(\phi^\epsilon_t + \epsilon)^{1-\alpha/2}} = \frac{|\nabla \phi^\epsilon|^{(1-a)}}{(\phi^\epsilon_t + \epsilon)^{1-\alpha/2}} |\nabla \phi^\epsilon|^a$$ and for $\nu > 2$, apply the Hölder inequality with $\tilde{q} = \frac{\nu}{apq'}$ to derive that $$\norm{\frac{|\nabla \phi^\epsilon|}{(\phi^\epsilon_t + \epsilon)^{1-\alpha/2}}}^{pq'}_{L^{pq'}} \leq \norm{\nabla \phi}^\frac{\nu}{\tilde q}_{L^\nu} \norm{\frac{|\nabla \phi^\epsilon|^{(1-a)pq'}}{(\phi^\epsilon_t + \epsilon)^{(1-\alpha/2)pq'}}}_{L^{\frac{\tilde q}{\tilde q - 1}}} \leq \const \left(\norm{\nabla \phi}^\nu_{L^\nu}+\norm{\frac{|\nabla \phi^\epsilon|^{(1-a)pq'}}{(\phi^\epsilon_t + \epsilon)^{(1-\alpha/2)pq'}}}^{\frac{\tilde q}{\tilde q - 1}}_{L^{\frac{\tilde q}{\tilde q - 1}}} \right) .$$ Finally, choose $p = \frac{2\nu}{2\nu - (\nu-2)a}$. A straightforward but tedious computation shows that for $a < \alpha/2 < 1$ and $\nu > 2$, it holds that $1<p<\infty$, $pq'a < \nu$ and that $$(1-\alpha/2) p q'\frac{\tilde q}{\tilde q - 1} < (1-a)pq' \frac{\tilde q}{\tilde q - 1}= 2.$$ As in the proof of Proposition \ref{WeakApproxSystemExistence}, we derive $L^\nu(\Omega \times [0,T]\times \mathbb T^d)$-regularity of $\phi^\epsilon$ for $\nu$ close to $2$. We can conclude from Lemma \ref{arbitraryphibound} that \ref{pboundlogderiv} holds for this exponent.
\end{proof}
\begin{proof}[Proof of Theorem \ref{WeakSystemExistence}] We divide the proof into several steps. We aim to derive existence of processes $(\phi,\bm c)$ such that for some sequence $\alpha_n \to 0$, $(\phi^{\alpha_n} c)_{n \geq 1}$ satisfy a variational identity. Then, for any admissible weight $\rho$, we can derive a variational identity for $\rho^2 \bm c = \lim_{\alpha \to 0} \rho^2 \phi^\alpha \bm c$.

To derive existence of these solutions and the properties of $(\phi^{\alpha_n} \bm c)_{n \geq 0}$, we again employ a compactness argument. For fixed $\epsilon_n$, consider the sequence $X_n = (x^n_m)_{m \geq 1}$ defined by $$(X_n)_{n \geq 1} \coloneqq (\phi^{\epsilon_n}, M^{\epsilon_n},((\phi^{\epsilon_n}+\epsilon_n)^{\frac1k}\bm c^{\epsilon_n})_{k \geq 1}, (\bm c^{\epsilon_n}|_{[T/k, T]})_{k \geq 1})_{n \geq 1} \subset \mathcal A \coloneqq \mathcal B_1 \times \mathcal B_2 \times \prod_{k = 1}^\infty \mathcal C \times \prod_{k = 1}^\infty \mathcal C_k,$$ where $M^\epsilon(t) \in {L}^2$ is the martingale part of $\bm c^{\epsilon_n}$. Here, \begin{enumerate}
    \item $\mathcal B_1 = C([0,T];L^2) \cap L^\infty_{w^\ast}([0,T];H^{1,2}) \cap L^2_w([0,T];H^{2,2})$,
    \item $\mathcal B_2 = C([0,T];({H}^{1,2})^\ast) \cap L^\infty_{w^\ast}([0,T];{L}^2)$,
    \item $\mathcal C = L^2([0,T];{L}^2) \cap C([0,T];({H}^{1,2}_b)^\ast) \cap L^2_w([0,T];{H}^{1,2})$,
    \item and $\mathcal C_k = L^2([T/k,T];{L}^2) \cap C([T/k,T];({H}^{1,2}_b)^\ast)\cap L^2_w([T/k,T];{H}^{1,2}).$
\end{enumerate} 

\textbf{Claim 1:} The laws of $(X_n)_{n \geq 1}$ are uniformly tight in $\mathcal A$ under the product topology.

By virtue of this claim, we can apply the almost sure Skorokhod representation theorem given in \cite{JakubowskiSkorokhod}, as the topological condition specified therein is stable under countable products. Let $1 \geq \epsilon_n \overset{n \to \infty}{\to} 0$ be a vanishing sequence of real numbers.

We prove uniform tightness by constructing compact sets in each coordinate with summable exceptional probabilities. More precisely, we derive uniform tightness estimates in Banach spaces $(\mathcal Y_m)_{m \geq 1}$ such that an embedding $$\iota \colon \prod_{m \geq1} \mathcal Y_m \hookrightarrow \mathcal A$$ exists with each coordinate mapping a compact embedding. For all coordinates except the positive-time restrictions $\bm c^{\epsilon_n}|_{[T/k,T]}$, the compactness argument is direct. To be more explicit:
\begin{enumerate}[(1)]
    \item We know from Lemma \ref{DecoupledMembEqExistence} that we can apply the compact embedding \eqref{eq:compactphiembedding}, so  $(\phi^\epsilon)_{\epsilon > 0}$ is compact in $C([0,T]; L^2)$.
    \item In the preceding Lemma, we derived uniform bounds in mean on $((\phi^{\epsilon}+\epsilon)^{\frac1k}\bm c^{\epsilon_n})_{\epsilon > 0}$ for each $k \geq 1$. The bounds are in spaces which satisfy the desired compact embedding into $\mathcal C$.
    \item Finally, due to the fact that there exists a deterministic upper bound on the Nemytskii operator $ b(\phi^\epsilon_s, \bm c^\epsilon_s) \in L^\infty([0,T];L^\infty)$, it follows that for any $\alpha \in (0,\frac12)$ and $p > 1/\alpha$, $(M^\epsilon)_{\epsilon > 0}$ is uniformly bounded in mean in $W^{\alpha, p}([0,T];{L}^2) \hookrightarrow C^{\alpha - 1/p}([0,T];{L}^2)$ (cf. the proof of Proposition \ref{stationaryApriori}).
\end{enumerate}

It remains to discuss the coordinates of the type $\bm c^{\epsilon_n}|_{[T/k,T]}$. For $a>0$, define the event
\begin{equation} \label{eq:PhiLowerBoundLocalisation}
E_a
\coloneqq
\Set{\int \phi_0 \d \bm x\geq a}.
\end{equation}
The quantitative subsolution estimate from Proposition \ref{Subsolution} yields a deterministic constant $\kappa_{k,a}>0$, independent of $n$, such that
$$
\essinf_{(t,x)\in[T/k,T]\times\mathbb T^d}\phi^{\epsilon_n}(t,x)
\geq
\kappa_{k,a}\int_{\mathbb T^d}\phi^{\epsilon_n}_0(x)\,\d {\bm x}
$$
almost surely. Consequently, on $E_a$,
$
\norm{1/\phi^{\epsilon_n}}_{L^\infty([T/k,T]\times\mathbb T^d)}
\leq
\frac{1}{\kappa_{k,a}}.
$
Since the laws of $\phi^{\epsilon_n}_0$ do not depend on $n$ and
$
\mathbb P\left(\int_{\mathbb T^d}\phi_0(x)\d \bm x>0\right)=1,
$
we have
$$
\lim_{a\searrow 0}
\sup_{n\geq1}
\mathbb P\left(\phi_0 \notin E_a \right)
=
0.
$$
On $E_a$, the equation for $\bm c^{\epsilon_n}$ is nonsingular on $[T/k,T]$ with constants depending on $k$ and $a$, but not on $n$. Moreover, since $E_a$ depends only on the initial value $\phi_0$, localisation is applicable. Thus, uniform bounds in mean on $$(\mathds{1}_{E_a}\bm c^{\epsilon}|_{[T/k, T]})_{\epsilon > 0}$$ can be obtained analogously (and in the same spaces, modulo time shift) to those specified in the statement of Lemma \ref{LastApriori} in the case $\log \phi_0 \in L^1$.
In particular, since $\nabla \phi^\epsilon \in L^\nu(\Omega \times [0,T] \times \mathbb T^d)$ for some $\nu > 2$, $\nabla \bm c^{\epsilon}\nabla \phi^\epsilon \in L^q$ for $q = \frac{2\nu}{\nu+2} > 1$, we can use the second statement of Lemma \ref{p1compactness} to obtain uniform convergence in the dual space $({H}^{1,2}_b)^\ast$. 

Let $\eta > 0$. Consider first coordinates not of the form $\bm c^{\epsilon_n}|_{[T/k,T]}$. By Markov's inequality, we can choose a radius $R_j>0$ so large that
$$
\sup_{n\geq1}
\mathbb P\left(
\norm{x^n_j}_{\mathcal Y_j}>R_j
\right)
\leq
\frac{\eta}{2^{j+1}}.
$$ In the other case, choose first $a_j>0$ so small that
$
\sup_{n\geq1}
\mathbb P\left(\phi_0 \notin E_{a_j}\right)
\leq
\frac{\eta}{2^{j+2}}.
$
Subsequently, choose $R_j>0$ so large that by Markov's inequality,
$$
\sup_{n\geq1}
\mathbb P\left(
E_{a_j}
\cap
\left\{
\norm{\bm c^{\epsilon_n}}_{\mathcal Y_j}>R_j
\right\}
\right) = \sup_{n\geq1}
\mathbb P\left(
\mathds{1}_{E_{a_j}}
\norm{\bm c^{\epsilon_n}}_{\mathcal Y_j}>R_j
\right) 
\leq
\frac{\eta}{2^{j+2}}.
$$
Then $$\sup_{n\geq1}
\mathbb P\left(
\norm{\bm c^{\epsilon_n}}_{\mathcal Y_j}>R_j
\right) \leq \sup_{n\geq1}
\mathbb P\left(\phi_0 \notin E_{a_j}\right)+\sup_{n\geq1}
\mathbb P\left(
\mathds{1}_{E_{a_j}}
\norm{\bm c^{\epsilon_n}}_{\mathcal Y_j}>R_j
\right) \leq \frac{\eta}{2^{j+1}}.$$
From this, we can derive uniform tightness rather quickly: let 
$$K_\eta \coloneqq \prod_j  \overline{\iota B_{\mathcal Y_j}(R_j)} \subset \mathcal A.$$ By compactness of the embeddings and Tykhonoff's theorem, this set is compact in $\mathcal A$, and $$\begin{aligned}
\mathbb P(X_n \notin K_\eta) \leq \sum_{j \geq 1}\mathbb P(x^n_j \notin B_{\mathcal Y_j}(R_j)) \leq \sum_{j \geq1} \frac{\eta}{2^j}  = \eta,
\end{aligned}$$
which demonstrates uniform tightness.  By the Skorokhod representation theorem, we obtain a probabiliy space $(\tilde \Omega,\tilde {\mathcal F},\tilde {\mathbb P})$ and random variables, for the sake of simplicity again denoted $(X_n)_{n \geq 1}$, such that $X_n \overset{\mathcal A}{\to} X $, $\tilde{\mathbb P}$-almost surely.

\textbf{Claim 2}: We now claim that there exist random variables $\phi, \bm c$ such that
\begin{enumerate}
    \item[(i)] $\phi^{\epsilon_n} \to \phi$ in $C([0,T];L^2) \cap L^\infty_{w^\ast}([0,T];H^{1,2}) \cap L^2_w([0,T];H^{2,2})$.
    \item[(ii)] $\bm c^{\epsilon_n} \to \bm c$ in $L^2([0,T];{L}^2)$ and in $C([s,T];({H}^{1,2}_b)^\ast)$ for any $s > 0$.
    \item[(iii)] $\nabla \bm c^{\epsilon_n} \to \nabla \bm c$ weakly in $L^1([s,T];{L}^2)$ for all $s > 0$.
    \item[(iv)] $\bm (\phi^{\epsilon_n}+\epsilon_n)^\alpha \bm c^{\epsilon_n} \to \phi^\alpha\bm c$ weakly in $L^2([0,T];{H}^{1,2}) \cap C([0,T];({H}^{1,2})^\ast)$ for all $\alpha = \frac1k$,  $k \in \mathbb N$.
    \item[(v)] $(\phi^{\epsilon_n}+\epsilon_n)^\alpha \nabla \bm c^{\epsilon_n} \to \phi^\alpha \nabla \bm c$ weakly in $L^2([0,T];{L}^2)$ for all $\alpha = \frac1k$,  $k \in \mathbb N$..
    \item[(vi)] $M^{\epsilon_n} \to M$ in $C([0,T];({H}^{1,2})^\ast) \cap L^\infty_{w^\ast}([0,T];{L}^2)$.
    \item[(vii)] $\phi^{\epsilon_n} \to \phi$, $\nabla \phi^{\epsilon_n} \to \nabla \phi$  and $\bm c^{\epsilon_n} \to \bm c$ $\tilde{\mathbb P} \otimes \d t \otimes \d {\bm x}$-almost surely, on a subsequence again denoted by $(\phi^{\epsilon_n},\bm c^{\epsilon_n})_{n \geq 1}$.
\end{enumerate}
Claims (i), (ii), (iii), (vi) and (iv) and (v) essentially follow from convergence of $(X_n)_{n \geq 1} \subset \mathcal A$ in combination with (vii). In particular, we obtain a limit function $\bm c$ since the estimate $$\norm{\bm c^{\epsilon_n}-\bm c^{\epsilon_k}}_{L^2([0,T];{L}^2)} \leq \underbrace{\norm{\bm c^{\epsilon_n}-\bm c^{\epsilon_k}}_{L^2([T/m,T];{L}^2)}}_{\to 0} + \underbrace{\norm{\bm c^{\epsilon_n}- \bm c^{\epsilon_k}}_{L^2([0,T/m]{L}^2)}}_{\leq \const/\sqrt{m}} \to 0$$ shows that $(c^{\epsilon_n})_{n \geq 1}$ is a Cauchy sequence. 

As $(c^{\epsilon_n})_{n \geq 1} \subset L^\infty(\tilde \Omega \times [0,T] \times \mathbb T^d)$ is uniformly bounded, the above estimate translates into $L^2(\tilde \Omega \times [0,T]\times\mathbb T^d)$-convergence due to the Lebesgue DCT. This, in turn, implies almost sure convergence of a subsequence. We can repeat this argument for $(\phi^{\epsilon_n})_{n \geq 1} \subset L^\infty(\tilde \Omega \times [0,T];H^{1,2})$ and thereby prove claim (vii), using the same upgrade to $L^2([0,T];H^{1,2})$ convergence as in the proof of Prop. \ref{WeakApproxSystemExistence}.

\textbf{Claim 3:} With this at hand, we prove the following facts:
\begin{enumerate}[(a)] 
    \item \label{item:MartingaleIdentification} $M$ is an ${L}^2$-valued continuous time martingale on $[0,T]$, adapted to the filtration $\mathcal F_t = \sigma(\phi|_{[0,s]}, \bm c|_{[0,s]},\bm c_s\,;0<s \leq t)$, with covariance operator 
    $$\int_0^t  b(\phi_s,\bm c_s) Q  b^\ast(\phi_s,\bm c_s) \d s.$$
    \item \label{item:PositiveTimeWeakForm} For any $t>s > 0$ and $\bm v \in {H}^{1,2}_b$, 
    $$\begin{aligned}
    \langle \bm c_t, \bm v \rangle &= \langle \bm c_s, \bm v \rangle + \int_s^t - \langle \bm D \nabla \bm c_r, \nabla \bm v \rangle  + \langle \bm D\frac{\nabla \bm c_r\nabla \phi_r}{\phi_r}, \bm v\rangle  + \langle  f(\phi_r,\bm c_r),\bm v \rangle  \d r \\ &\quad+ \int_s^t \langle \bm v, \d M_r. \rangle  
    \end{aligned}$$ In  particular, $c$ is weakly continuous on $(0,T]$ as an ${L}^2$-valued process. 
    \item \label{item:PhiWeightedWeakForm}For all $\alpha = \frac1k$,  $k \in \mathbb N$, $t > 0$  and $\bm v \in {H}^{1,2}_b$, \begin{equation}\label{Gamma}
    \begin{aligned}
    \langle \phi_t^\alpha\bm c_t, \bm v \rangle =  \langle \phi_0^\alpha\bm c_0, \bm v \rangle &+ 
    \int_0^t\Big(- \langle \bm D \phi_s^{\alpha}\nabla \bm c_s, \nabla \bm  v  \rangle - \langle ((\bm D+ \gamma \bm I)\alpha-\bm D) \phi_s^{\alpha-1}   \nabla \bm c_s \nabla \phi_s, \bm v \rangle\\  &  \quad\quad +\langle  \phi_s^{\alpha} f(\phi_s, \bm c_s), \bm v \rangle  - \alpha \gamma \langle \phi_s^{\alpha-1}  \bm c_s \otimes \nabla \phi_s , \nabla \bm v \rangle
    \\ & \quad\quad - \alpha(\alpha-1)\gamma \left \langle \frac{|\nabla \phi_s|^2}{\phi^{2-\alpha}_s} \bm c_s, \bm v\right \rangle + \alpha \langle \phi_s^{\alpha-1} \Psi(\phi_s, \bm c_s,\nabla \phi_s) \bm c_s, \bm v \rangle \\ & \quad\quad + \alpha \langle \phi_s^{\alpha-1} g(\phi_s, \bm c_s) \bm c_s , \bm v \rangle\Big) \d s 
    \\ &  + \int_0^t \langle \bm v, \phi_s^{\alpha} \d M_s \rangle .
    \end{aligned}
    \end{equation}
    and hence $\phi^\alpha \bm c$ is weakly continuous in ${L}^2$.
\end{enumerate}
We begin begin by proving \eqref{item:PositiveTimeWeakForm}. For each $k$, this works by analogous (but simpler) methods as the proof of Prop. \ref{WeakApproxSystemExistence}, where in particular, for each $\omega' \in \Omega'$, we get a uniform (in $\Omega' \times [T/k,T]$) bound on $\frac{\nabla \phi^{\epsilon_n}}{\phi^{\epsilon_n}}$ dictated by $\int \phi_0(\omega)\d \bm x$. Moreover, 
\begin{equation} \label{trivial}
\int_0^t \langle \bm v, \cdot \rangle  \d M^{\epsilon_n}_s = \langle \bm v, M^{\epsilon_n}_t \rangle \overset{n \to \infty}{\to} \langle \bm v, M_t\rangle
\end{equation} 
is trivial by the convergence properties of $M^{\epsilon_n}$.

As in Prop. \ref{StationaryApproximation}, \eqref{item:MartingaleIdentification} follows by almost sure uniform convergence of $M^{\epsilon_n}$ and uniform integrability in the dual space together with almost sure (weak) convergences of $(\phi^{\epsilon_n},\bm c^{\epsilon_n}, \bm c^{\epsilon_n}_t)_{n \geq 1}$. We note that in this case, claim \eqref{item:PositiveTimeWeakForm} implies $\sigma(\phi|_{[0,t]}, \bm c|_{[0,t]}, \bm c_s, \bm c_t)\subset \mathcal F_t$-measurability of $M_t - M_s$ for any $s > 0$, and taking a limit $s \to 0$ yields the required measurability of $M_t$. 

At last, we can apply the product rule from Lemma \ref{MyProdIto} to $\phi^\alpha_t \bm c_t - \phi^\alpha_s\bm c_s$ and combine the continuity properties of $\phi^\alpha \bm c$ with the integrability properties of $\frac{\nabla \phi}{\phi^{1-\alpha/2}}$ and $\phi^\alpha \nabla \bm c$ to take the limit $s \to 0$. This then gives the weighted formula \eqref{item:PhiWeightedWeakForm}.

\textbf{Claim 4:} The processes $(\phi, \bm c)$ constitute a weighted martingale solution of equation \eqref{eq:WeakSolFullSystem}. 

To this end, we take the limit $\alpha \to 0$ in \eqref{Gamma}. Let $\rho \in L^2(\tilde \Omega;L^2([0,T]; H^{1,2})) \cap L^\infty(\tilde \Omega;L^\infty([0,T];L^\infty))$ with $$\tilde{\mathbb E}\left[\int_0^T \int \frac{|\nabla \phi_t|^2}{\phi^2_t}\rho^2_t \d {\bm x} \d t\right]< \infty$$ be given. Further suppose that $\rho \in L^\infty(\tilde \Omega \times [0,T]\times \mathbb T^d)$ has version that is progressively measurable with respect to the filtration generated by $(\phi, \bm c)$ as processes with values in $L^2 \oplus {L}^2$ and that $\rho$ is absolutely continuous in $(H^{1,2}_b)$ with $$\partial_s \rho \in L^2(\tilde \Omega;L^2([0,T];(H^{1,2})^\ast) + L^1(\tilde \Omega;L^1([0,T];L^1)).$$

Application of Lemma \ref{MyProdIto} gives us
$$
\begin{aligned}
    \langle \phi_t^\alpha \rho^2_t\bm c_t, \bm v \rangle =  \langle \phi_0^\alpha \rho^2_0\bm c_0, \bm v \rangle &+ 
    \int_0^t\Big(- \langle \bm D \phi_s^{\alpha} \rho^2_s\nabla \bm c_s, \nabla \bm v  \rangle - \langle \bm D \phi_s^{\alpha} \nabla \bm c_s, 2\rho_s\nabla \rho_s \otimes \bm v \rangle \\ & \quad  \quad  - \langle ((\bm D+ \gamma \bm I)\alpha-\bm D) \phi_s^{\alpha-1} \nabla \bm c_s \nabla \phi_s, \rho^2_s \bm v \rangle\\  &  \quad\quad +\langle  \rho^2_s\phi_s^{\alpha} f(\phi_s, \bm c_s), \bm v \rangle  - \alpha \gamma \langle \phi_s^{\alpha-1}  \bm c_s \otimes \nabla \phi_s , \rho^2_s \nabla \bm v + 2\rho_s \bm v \otimes \nabla \rho_s\rangle
    \\ & \quad\quad - \alpha(\alpha-1)\gamma \left \langle \frac{|\nabla \phi_s|^2}{\phi^{2-\alpha}_s} \bm c_s, \rho^2_s\bm v\right \rangle + \alpha \langle \phi_s^{\alpha-1} g(\phi_s) \bm c_s , \rho^2_s \bm v \rangle \\ & \quad\quad + \alpha \langle \phi_s^{\alpha-1} \Psi(\phi_s, \bm c_s,\nabla \phi_s) \bm c_s, \rho^2_s \bm v \rangle \\ &\quad \quad \quad + 2\langle \partial_s \rho_s \bm v, \rho_s \phi^\alpha_s \bm c_s \rangle \Big) \d t 
    \\ &  + \int_0^t \langle \rho^2_s \bm v, \phi_s^{\alpha} \rangle \d M_s .
\end{aligned} $$
If we can now prove that $(\phi^\alpha \rho \nabla c)_{\alpha > 0}$ is uniformly bounded in $L^2(\tilde \Omega\times[0,T];{L}^2)$, the monotone convergence theorem gives almost sure boundedness of $\norm{\rho_t \nabla c_t}^2_{ L^2([0,T];{L}^2)}$. Then, the fact that $\alpha \frac{\nabla \phi}{\phi^{1-\alpha}} \to 0$ in $L^2([0,T];{L}^2)$ and the Lebesgue DCT imply that for $\alpha \to 0$,
$$
\begin{aligned}
    \langle \rho^2_t\bm c_t, \bm v \rangle =  \langle \mathds{1}_{\{\phi_0 > 0\}}\rho^2_0\bm c_0, \bm v \rangle &+ 
    \int_0^t\Big(- \langle \bm D \nabla \bm c_s, \nabla (\rho^2_s\bm v)  \rangle   + \langle \bm D \phi_s^{-1} \nabla \bm c_s  \nabla \phi_s, \rho^2_s \bm v \rangle\\  &  \quad\quad +\langle  \rho^2_s f(\phi_s, \bm c_s), \bm v \rangle   + 2\langle \partial_s \rho_s\bm v, \rho_s \bm c_s  \rangle \Big) \d t 
    \\ &  + \int_0^t \langle \rho^2_s \bm v, \cdot \rangle \d M_s.
\end{aligned} $$
We first explain individual convergences of the involved terms.
\begin{enumerate}[(1)]
    \item Note that $ \mathds{1}_{\{\phi_0 > 0\}}\rho^2_0 \equiv \rho^2_0$ by Proposition \ref{prop:WeightSupport}. 
    \item Next, we prove the convergence 
    $\langle \partial_s \rho_s \bm v, \rho_s \phi^\alpha_s \bm c_s\rangle \overset{n \to \infty}{\to} \langle \partial_s \rho_s \bm v, \rho_s \bm c_s \rangle.
    $ This requires two separate arguments: An estimate on the $L^1([0,T];L^1)$-part of $\partial_s\rho_s$, and an estimate on the $L^2([0,T];(H^{1,2})^\ast)$-part.
    \item The convergence of the $L^1$-part is a consequence of the Lebesgue DCT since $\phi_s$ is strictly positive for $s > 0$, which implies almost sure convergence, and the existence of the majorant given by $\rho \bm c_s \bm v$. 
    \item The other convergence follows from $\rho_s \phi^\alpha_s \bm c_s \odot \bm v \to \rho_s \bm c_s \odot \bm v$ in $L^2(\tilde \Omega \times [0,T]; {H}^{1,2})$, in particular $$\nabla (\rho_s \phi^\alpha_s \bm c_s \odot \bm v) = \phi^\alpha_s \nabla(\rho_s\bm c_s \odot\bm v) + \alpha \frac{\nabla \phi_s}{\phi^{1-\alpha}_s} \otimes \rho_s\bm c_s \odot\bm v \to \nabla(\rho_s\bm c_s \odot \bm v)$$ in $L^2(\tilde \Omega \times [0,T] \times \mathbb T^d)$. It is not difficult to derive by the DCT that the first term converges to the right hand side, while $L^2$-convergence of $\nabla(\phi^\alpha) = \alpha \frac{\nabla \phi_s}{\phi^{1-\alpha}_s}\to 0$ settles the proof.
    \item By similar arguments, all terms with a prefactor $\alpha$ vanish, where we use in particular admissibility of the weight.
\end{enumerate}
It is left to show that $(\phi^\alpha \rho \nabla \bm c)_{\alpha > 0}$ is bounded uniformly, the proof of which requires the energy inequality given by Lemma \ref{MyProdIto}. As in the proof of Lemma \ref{LastApriori}, we can rearrange the Itô formula for $\norm{\phi^{\alpha/2}_t \rho_t \bm c_t}^2_{L^2}$ to then obtain the desired boundedness. The proof is mostly analogous. A suitable control of the additional term deriving from $\partial_t \rho_t$ can be derived since the Young inequality gives us a constant $C_\varepsilon > 0$ with
$$\begin{aligned}
\int_0^t\langle \partial_s \rho_s, \rho_s \phi^{\alpha}_s \underbrace{\bm c^2_s}_{\coloneqq \bm c_s \odot \bm c_s}\rangle \d s &\leq C_\varepsilon\int_0^t\norm{\partial_s \rho_s}^2_{({H}^{1,2}_b)^\ast} \d s + \varepsilon \int_0^t\norm{ \rho_s \phi^{\alpha}_s \bm c^2_s}^2_{{H}^{1,2}_b} \d s \\
& \leq C_\varepsilon\int_0^t\norm{\partial_s \rho_s}^2_{({H}^{1,2}_b)^\ast} \d s + \varepsilon \int_0^t \norm{\rho_s \phi^\alpha_s \bm c^2_s}^2_{{L}^2} \d s+ \const\\ & \quad + 3\varepsilon \int_0^t \norm{\nabla \rho_s \otimes \phi^\alpha_s \bm c^2_s}^2_{{L}^2} + \norm{ \rho_s \nabla (\phi^\alpha_s) \otimes \bm c^2_s}^2_{{L}^2} + \norm{2\rho_s \phi^\alpha_s \bm c_s \odot \nabla \bm c_s}^2_{{L}^2} \d s
\end{aligned}$$
for any $\varepsilon > 0$. Here, we in particular used $L^\infty(\Omega \times[0,T]\times \mathbb T^d)$-boundedness of $\rho, \phi$ and $\bm c$. In particular, for $\varepsilon > 0$ small enough, $\norm{\rho_s \phi^\alpha_s \bm c_s  \odot\nabla \bm c_s}^2_{{L}^2}$ gets absorbed by $-\norm{\sqrt{\bm D} \rho_s \nabla \bm c_s}^2_{{L}^2}$.
Altogether, we can conclude that the desired type of martingale solution exists. The additional continuity and integrability properties of $\rho^2\bm c$ follow by integrability of $\rho \nabla \bm c \in L^2(\Omega \times [0,T] \times \mathbb T^d)$ and weak continuity of $\rho^2 \bm c$, combined with continuity of $t \mapsto \norm{\rho^2_t \bm c_t}^2$ due to Lemma \ref{MySquareIto}.
\end{proof}

\section{Applications} \label{ApplicationSec}
In this section, we present applications of the theory developed in this manuscript. The focus is on models of chemotaxis from biophysical publications.

\begin{example} In \cite{ABS}, the authors first introduced a system of equations directly corresponding to the one we studied in the previous sections. The model in question specifies two processes $\phi, c \colon [0,T] \times \mathbb T^2 \rightarrow \mathbb R$ intended to describe the motion of a cell coupled to the dynamics of motion-inducing biochemical components inside the cell. It consists of the random reaction-diffusion equation \begin{equation}\label{PhaseFieldABS} 
    \partial_t \phi_t  = \gamma \Delta \phi_t + g(\phi_t) + \beta \left(\int\phi_t \d x - A_0 \right)|\nabla \phi_t| + \alpha c_t |\nabla \phi_t|,
\end{equation} which models time evolution of the phase field $\phi$, coupled with the stochastic reaction-diffusion equation \begin{equation} \label{ChemEqABS}
\partial_t c_t =  \frac{1}{\phi_t} \nabla \cdot \big(\phi_t D \nabla c_t\big) + f(c_t) - \rho c_t +  \phi_t(1-\phi_t) \xi_t,
\end{equation} to model the distribution $c$  inside the cell. Here $\gamma, \beta, A_0, \alpha, D, \rho >0$ are real constants. The reaction terms $g$ and $f$ are given by \[g(x) = -Kx(x-1)(x-0.5)\] and \[f(x) = -K_\alpha x(x-1)(x-\delta(\phi_t,c_t)),\] with \[\delta(\phi_t,c_t) = \delta_0 + M\left( \int \phi_t c_t \d x-A_1 \right),\] where $K, K_\alpha, M, A_1 \in \R^+$ and $\delta_0 \in (0,1)$. The spatio-temporal noise $\xi$ specified in the article is of Ornstein-Uhlenbeck type. 

As Ornstein-Uhlenbeck type noise does not satisfy the martingale properties we relied on in the derivations of our existence theorem, we assume that the noise is of Wiener type. To fit this system into the framework of this article, we need to replace the nonlocal reaction threshold $\delta$ by the truncated term $\tilde \delta(\phi_t, c_t) = \delta(\tilde \phi_t, \tilde c_t)$ (cf. \eqref{eq:WeakSolTruncFullSystem} with $K_\phi = K_1 = 1$, $L_1 = 0$). Similarly, we truncate the nonlocal factor in \eqref{ChemEqABS}. At last, we incorporate the truncation $\eta$ of the dispersion coefficient.

By the invariance properties of the resulting reaction terms, we can conclude from Theorem \ref{MartingaleSolutionLog} that variational solutions of the system $$\begin{aligned}
\partial_t \phi_t  &= \gamma \Delta \phi_t + g(\phi_t) + \beta \left(\int\phi_t \d x - A_0 \right)|\nabla \phi_t| + \alpha c_t |\nabla \phi_t| \\
\d c_t &= \left( \frac{1}{\phi_t} \nabla \cdot \big(\phi_t D \nabla c_t\big) + f(c_t) - \rho c_t \right) \d t+  \eta(c_t)\phi_t(1-\phi_t) \d W_t,
\end{aligned}$$
exist, given initial conditions $0 \leq \phi_0, c_0 \leq 1$ with $\phi_0,\in  H^{1,2}$ and $\log \phi_0 \in L^1$. Here, $W_t$ is assumed to be a $Q$-Wiener process on $H^{r,2}(\mathbb T^2)$ for some $r > 0$.
\end{example}
\begin{example}
Related (deterministic) models have also been applied to population dynamics in changing environments, though not in the framework of phase-field models. \citet{PeaseModel} introduced the system of equations 
$$ \begin{aligned} \partial_t n_t &= \frac{\sigma^2}{2}\Delta n_t + n_t\log W_t \\ \partial_t z_t &= \frac{\sigma^2}{2} \Delta z_t + \sigma^2 \nabla \log n_t \nabla z_t + G\,\partial_z \log W_t \end{aligned}$$
to describe the evolution of the density $n$ of individuals of a population, coupled to the evolution of the mean phenotype $z$. Here, $\sigma, G > 0$ are constants and $W$ is the per-capita growth rate of a population at a particular point in space. To the best of our knowledge, though frequently used in the biological literature (cf. \cite{KirkpatrickModel} and subsequent works such as \cite{GarciaRamosModel}), 
the equations have been analysed so far only in special cases. Examples are \cite{MillerWaveKirkpatrick} and  \cite{RaoulKirkpatrick}, where the latter also considers this type of model under periodic boundary conditions. Reference  \cite{KanarekWebb} considers the case $\log W_t = (1-n_t)(n_t-z^2_t)$ 
which is covered by our assumptions. Setting $\sigma^2 = 2, G = 1$ and additionally introducing noise to the phenotype evolution results in the system $$ \begin{aligned} \partial_t n_t &= \Delta n_t + n_t(1-n_t)(n_t-z_t^2) \\ \d z_t &= \left( \Delta z_t + 2 \frac{\nabla n_t \nabla z_t}{n_t} - 2G(1-n_t)z_t\right) \d t + \eta(z_t) \d W_t.\end{aligned}$$ Again, the truncation $\eta$ is supported inside the interval $[0,1]$ and the driving noise is chosen as in the previous example. These considerations extend to the Kirkpatrick-Barton model considered in \cite{RaoulKirkpatrick}, for appropriate choices of the parameter $y_{opt}$. Note that although the constant factor of the logarithmic derivative differs from the coefficient of the Laplacian, the derived solution theory applies with virtually no difference. 

We note that the SPDE model introduced in \cite{BartonSPDE} is currently out of reach of the theory developed in this article, due to the singular dispersion coefficients and the presence of noise in the entire system. However, if we discard stochastic effects on the population density $n$, replace white noise by coloured noise, set $b \equiv 0$ and regularise the singular dispersion coefficients as $\sqrt{\frac{z(1-z)}{n+\epsilon}}$ for some $\epsilon > 0$, our analysis is applicable.

\end{example}

\begin{example} In \cite{CaoRappelELife, CaoRappelNr2}, the authors introduced stochastic phase-field models of the form
$$
\begin{aligned}
\partial_t \phi_t &= \gamma \Delta \phi_t +  g(\phi_t)+ \alpha \frac{S^3_t}{S^3_t + S_2^3}|\nabla \phi_t| 
- \beta \left( \int \phi_t \d x - A_0 \right)|\nabla \phi_t|\\
\partial_t (\phi_t R_t) &= \nabla \cdot (\phi_t  D_R\nabla R_t) + \phi_t \left[ \frac{c_2 S_t - c_1 R_t}{\tau} + \xi^1_t \right]\\
\partial_t (\phi_t S_t) &= \nabla \cdot (\phi_t  D_S\nabla S_t) + \phi_t \left[ \left( \frac{k_s S^2_t}{K_s^2 + S^2_t} + b \right)(S_1 - S_t) - (d_1 + d_2 R_t)S_t + \xi^2_t \right],
\end{aligned}
$$
for $\alpha, \gamma, \beta, A_0$ and $g$ chosen as in the previous example, noise terms $\xi^1, \xi^2$ and constants $$S_1,S_2,k_s,K_s, c_2, c_1, d_1,d_2,D_R, D_S > 0$$
Their numerical methods (cf. the Materials and methods section) indicate that the appropriate mathematical formulations of the stochastic partial differential equations are
$$\d R_t = \left(\frac{1}{\phi_t}\nabla \cdot (\phi_t  D_R\nabla R_t) + \frac{c_2 S_t - c_1 R_t}{\tau} - \partial_t \log \phi_t \cdot R_t \right) \d t + \d W^1_t$$
and
$$
\d S_t =\left( \frac{1}{\phi_t} \nabla \cdot (\phi_t D_S\nabla S_t) + \left( \frac{k_s S^2}{K_s^2 + S^2_t} + b \right)(S_1 - S_t) - (d_1 + d_2 R_t)S_t - \partial_t \log \phi_t \cdot S_t  \right)\d t + \d W^2_t.
$$
As the methods developed in this manuscript do not extend to this model due to the presence of $\partial_t \log \phi_t$, we need to introduce the simplifying assumption that the phase-field dynamics happen at a slower time-scale to approximate $$\d (\phi_t R_t) \approx \phi_t \d R_t.$$ Also, once again, we truncate the noise with some cut-off function $\eta$, for example inside the interval $[0,S_1]$. The resulting system $$\begin{aligned}
\d R_t &= \left(\frac{1}{\phi_t}\nabla \cdot (\phi_t  D_R\nabla R_t) + \frac{c_2 S_t - c_1 R_t}{\tau} \right) \d t + \eta(R_T)\d W^1_t\\
\d S_t &=\left( \frac{1}{\phi_t} \nabla \cdot (\phi_t D_S\nabla S_t) + \left( \frac{k_s S^2_t}{K_s^2 + S^2_t} + b \right)(S_1 - S_t) - (d_1 + d_2 R_t)S_t \right)\d t + \eta(S_t)\d W^2_t.
\end{aligned}
$$
can be treated by the methods we developed. In particular, we note that for $R_0, S_0 \in [0,S_1]$, the reaction terms force the system to stay confined to this interval, i.e. Assumption \ref{ass:WeakSolInvariance} is satisfied. We remark that analogous considerations can be applied to the models employed in \cite{FlemmingCorticalWaves} and \cite{MoldenhawerSpontaneous}, where the latter results in exactly the system of equations considered in the previous example.

\end{example}

\begin{example}
In \cite{Torres2025dissipative}, a conservative reaction-diffusion system is employed to model cell motility states. For $D, D_F, b, \gamma, s, \eta, p_0, p_1 > 0$, consider
$$\begin{aligned}
\partial_t u_t &= D \Delta u_t + (b + \gamma u_t^2)v_t - (1 + sF_t + u^2_t)u_t , \\
\partial_t v_t &= \Delta v_t -(b + \gamma u_t^2)v_t + (1 + sF_t + u^2_t)u_t, \\
\partial_t F &= D_F \Delta F_t + \eta(p_0 + p_1 u_t - F_t).
\end{aligned}$$ Although this deterministic model does not involve a phase-field, our solution theory  even in the stochastic, coupled case still applies. W.l.o.g. set $b = \gamma = \frac12$, $D = D_F = s = \eta = 1$, $p_0 + p_1 = 1$. Then the specified reaction term satisfies the invariance condition from Assumption \ref{ass:WeakSolInvariance} for $\mathcal K = [0,1]^3$. Thus, given initial conditions confined to $\mathcal K$, suitably smooth $Q$-Wiener processes $W^1, W^2, W^3$ and a smooth truncation $\eta$ with support on $[0,1]$, there exist solutions on the two-torus $\mathbb T^2$ of the system 
$$\begin{aligned}
\partial_t \phi_t &= \gamma \Delta \phi_t +  g(\phi_t)+ \alpha h(u_t,v_t,F_t)|\nabla \phi_t| 
- \beta \left( \int \phi_t \d x - A_0 \right)|\nabla \phi_t|\\
\d u_t &= \left(\frac{1}{\phi_t}(\phi_t D \Delta u_t) + (b + \gamma u_t^2)v_t - (1 + sF_t + u^2_t)u_t \right)\d t+\eta(u_t)\d W^1_t, \\
\d v_t &= \left(\frac{1}{\phi_t}(\phi_t \Delta v_t) -(b + \gamma u_t^2)v_t + (1 + sF_t + u^2_t)u_t\right)\d t+\eta(v_t)\d W^2_t, \\
\d F &= \left( \frac{1}{\phi_t}(\phi_t D_F \Delta F_t) + \eta(p_0 + p_1 u_t - F_t)\right)\d t + \eta(F_t)\d W^3_t.
\end{aligned}$$ Here, the parameters that specify the phase-field dynamics are chosen as in the previous examples, except for the function $h$, which can be chosen to be any smooth function.
Naturally, this existence result extends to a large class of other conservative systems which satisfy a suitable invariance condition.
\end{example}

\section{Appendix}
\begin{proof}[Proof of Lemma \ref{MyProdIto}]
By the specified conditions, we can infer that $x$ and $y$ have  weakly continuous representants in $L^2$ with 
$$\sup_{t \in [0,T]} \norm{ x_s}_{L^\infty}, \sup_{t \in [0,T]} \norm{y_s}_{L^\infty} \in L^\infty(\Omega).$$ Here, the supremum runs over all $t \in [0,T]$, which is well-defined since balls in $L^\infty$ are strongly closed and convex, hence closed under weak convergence.
Choose a sequence of partitions $\pi_n$ such that the sampled step functions $$x^n = \sum_{t_i \in \pi_n} x_{t_{i+1}}\mathds{1}_{[t_i,t_{i+1})}, ~ y^n = \sum_{t_i \in \pi_n} y_{t_{i}}\mathds{1}_{[t_i,t_{i+1})} \in H^{1,2}$$ of the continuous representant approximate $x_s$ and $y_s$ in $L^2(\Omega\times[0,T];H^{1,2})$ (cf. Lemma 4.2.6 in \cite{LiuRoeckner2015}). As usual, we can extract 
a subsequence, for simplicity again denoted by $n \geq 1$, such that $x_n, y_n$ converge $\mathbb P \otimes \d t \otimes \d x$-almost surely. A telescopic sum now yields 
$$ 
\begin{aligned}
    \langle x_s y_s, v \rangle - \langle x_0y_0, v\rangle &= \sum_{t_i \in \pi_{n_k}} \langle x_{t_{i+1}}-x_{t_{i}}, y_{t_i}v\rangle+\langle y_{t_{i+1}}-y_{t_{i}}, x_{t_{i+1}}v\rangle \\ &=\int_0^t \langle u_s+ \tilde u_s, y^n_s w\rangle \d s+\int_0^t \langle v_s+ \tilde  v_s, x^n_sw\rangle \d s+ \int_0^t\langle y^n_sw, \cdot \rangle \d M_s.
\end{aligned}
$$
Note that by construction, $y^n_s$ is adapted. The approximation properties of $x^n, y^n$ yield 
convergence of the respective terms to the desired limit. In particular, one can exploit uniform 
$L^\infty$-boundedness and almost sure convergence to apply the Lebesgue DCT and conclude that 
$$
\int_0^t \langle \tilde u_s, y^n_s w\rangle + \langle \tilde v_s, x^n_s w\rangle \d s 
\to \int_0^t \langle \tilde u_s, y_s w\rangle + \langle \tilde v_s, x_s w\rangle \d s.
$$ 
At last, we show almost sure convergence of the stochastic integral (for a suitable subsequence). We 
do this by showing convergence in $L^2(\Omega;C([0,T];L^2))$. As a preliminary step, we remark that the 
almost sure convergence of $$\int_0^t \norm{(y_s-y^n_s)w g_s \sqrt Q}^2_{HS} \d s = \sum_{k \geq 1} \int_0^t \norm{(y_s-y^n_s)w g_s \sqrt Q u_i}^2_{L^2} \d s\to 0$$ follows by the 
Lebesgue DCT and almost sure convergence of 
$$
\int_0^t \norm{(y_s-y^n_s)w g_s \sqrt Q u_i}^2_{L^2} \d s 
= \int_{[0,t] \times \mathbb T^n} ((y_s-y^n_s)w g_s \sqrt Q u_i)^2\d x \d s \to 0 
$$ 
due to the pointwise upper bound $$\norm{(y_s-y^n_s)w g_s \sqrt Q u_i}^2_{L^2} \leq 2\norm{y_s}_{L^\infty }\norm{w}_{L^\infty} \norm{g_s \sqrt Q u_i}^2_{L^2}$$ for each $i \geq 1$. The 
claim now follows by means of the BDG inequality and another application of the DCT.
\end{proof}

\begin{proof}[Proof of Lemma \ref{MySquareIto}]
Let $P_n$ denote the projection onto an orthogonal set of vectors $$A_n = \Set{e^n_1,\dots,e^n_{k_n}} \subset H^{1,2} \cap L^\infty$$ that possesses the $H^{1,2}$ and $L^\infty$ stability property, so that $$\norm{P_n x}_{H^{1,2} \cap L^\infty} \leq C_P \norm{x}_{H^{1,2} \cap L^\infty}$$ for some $C_P > 0$ independent of $n$, and that $\norm{P_n x - x}_{L^2} \to 0$. We first prove that for any finite projection $P_n$, \begin{equation}
\begin{aligned} \label{finiteProjIto}
\norm{P_n x_t}^2_{L^2} &= \norm{P_nx_0}^2_{L^2} + \int_0^t \langle u_s, P_n x_s\rangle \d s + \int_0^t \langle \tilde{u}_s, P_n x_s \rangle \d s + \int_0^t \langle P_n x_s, \cdot \rangle \d M_s \\&\quad +\int_0^t \norm{P_n g_s \sqrt Q}^2_{HS} \d s.
\end{aligned}
\end{equation}
Due to polarisation, $$\norm{P_nx_t}^2_{L^2} - \norm{P_nx_0}^2_{L^2} = \norm{P_n(x_t-x_0)}^2_{L^2} - 2\langle x_t-x_0, P_n x_0\rangle.$$ Now, by the properties of the chosen basis, $P_n x_0 \in H^{1,2} \cap L^\infty$ and it follows that \begin{equation} \label{term2}
\langle x_t-x_0, P_n x_0\rangle = \int_0^t \langle u_s, P_n x_0\rangle \d s + \int_0^ t \langle \tilde{u}_s, P_n x_0\rangle \d s + \int_0^t \langle P_n x_0, \cdot \rangle \d M_s.
\end{equation} On the other hand, $$\norm{P_n(x_t-x_0)}^2_{L^2}  = \sum_{k = 1}^n \langle x_t-x_0, e_k \rangle^2,$$ where, due to the Ito formula applied to  $$f_t =\langle x_t-x_0, e_i \rangle = \int_0^t \langle u_s, e_k\rangle + \langle \tilde{u}_s, e_k \rangle \d s + \int_0^t  \d \langle e_k, M_s\rangle ,$$ we find that \begin{equation*} 
\begin{aligned}
\langle x_t-x_0, e_k \rangle^2 = &2 \int_0^t \langle x_s-x_0,e_k\rangle (\langle u_s, e_k\rangle + \langle \tilde{u}_s, e_k\rangle) \d s \\ &+ 2\int_0^t \langle x_s -x_0,e_k\rangle \d \langle e_k,M_s\rangle + \int_0^t \norm{\langle e_k, g_s \sqrt{Q} \circ \cdot \rangle}^2_{HS} \d s.
\end{aligned}
\end{equation*}
Plugging in and collecting terms, we therefore arrive at the identity
\begin{equation} \label{term1}
\begin{aligned}
\norm{P_n(x_t-x_0)}^2_{L^2}  &= 2\int_0^t \langle u_s, P_n(x_s-x_o) \rangle + \langle \tilde{u}_s,P_n(x_s-x_0)\rangle \d s \\ &\quad + 2 \int_0^t \langle P_n (x_s-x_0),\cdot \rangle \d M_s + \int_0^t \norm{P_n g_s \sqrt{Q}}^2_{HS} \d s.
\end{aligned}
\end{equation}
Adding \eqref{term2} and \eqref{term1} together, we obtain \eqref{finiteProjIto}. At last, we take the limit $n \to \infty$ to derive the desired identity. Naturally, $\norm{P_n x_t}^2_{L^2}$ and $\norm{P_n x_0}^2_{L^2}$ converge to the appropriate limit. Further, note that $$\int_0^t \norm{P_n x_s -x_s}^2_{L^2} \d s \to 0$$ by dominated convergence, and therefore $P_n x \to x$ in $L^2([0,T];L^2)$. From this, it already follows that the stochastic integral and trace term converge, again by dominated convergence. Further, we find that there exists a subsequence $(n_k)_{k \geq 1}$ such that $P_n x \to x$ almost surely on $[0,T] \times \mathbb T^n$. It then finally follows that $$\int_{[0,T] \times \mathbb T^n} \tilde{u}_s(y) P_n x_s(y) \d y \to \int_{[0,T] \times \mathbb T^n} \tilde{u}_s(y)x_s(y) \d y$$ by dominated convergence, since $$\norm{P_n x_s} _{L^\infty([0,T] \times \mathbb T^n)} \leq C_P\norm{x_s}_{L^\infty([0,T] \times \mathbb T^n)}$$ by the stability properties of the projections. Finally, weak convergence in $L^2([0,T];L^2)$ of $\nabla P_n x_s$ to $x_s$ follows by boundedness, which ensures convergence of some subsequence, and strong convergence of $P_n$ in $L^2$, so that the argument is finished by density of smooth enough functions.
\end{proof}
\begin{proof}[Proof of Proposition \ref{prop:WeightSupport}]
We assume that for all $t \in [0,T]$, $\phi_t \in \mathcal X_\phi$ and that $\rho \in L^\infty([0,T] \times \mathbb T^n)$. By the chain rule,
\begin{equation} \label{eq:WeightSupport}
\begin{aligned}
&\int (1-\phi^\alpha_t)\rho^2_t\d x - \int (1-\phi^\alpha_0)\rho^2_0 \d x \\&= -\alpha\left((1-\alpha)\int_0^t \int \frac{|\nabla \phi_s|^2}{\phi^{2-\alpha}_s} \rho^2_s\d x \d s + \int_0^t \int \tilde \Psi(\bm c_t,\phi_t) \frac{|\nabla \phi_t|}{\phi^{1-\alpha}_t} \rho^2_s\d x + \int_0^t \int \frac{\Tilde{g}(\phi_t)}{\phi_t} \phi^\alpha_s \rho^2_s\d x \d s  \right) \\ &\quad + 2 \int_0^t \langle \partial_s\rho_s, \rho_s(1-\phi^\alpha_s)\rangle \d s.
\end{aligned}
\end{equation}
We now want to let $\alpha \to 0$. By assumption, the first term on the right hand side tends to $0$ as $\alpha \to 0$. Similarly, $$\rho_s (1-\phi^\alpha_s) \to 0 \text{ in } L^2([0,T];L^2), ~\nabla(\rho_s(1-\phi^\alpha_s)) = \nabla \rho_s (1-\phi^\alpha_s)-\alpha \rho_s \frac{\nabla \phi_s}{\phi^{1-\alpha}_s} \to 0 \text{ in } L^2([0,T];L^2).$$ Although we cannot additionally conclude that $$\rho_s(1-\phi^\alpha_s) \overset{L^\infty([0,T];L^\infty)}{\to} 0,$$ dominated convergence still gives us that $$\int_0^t \langle \partial_s\rho_s, \rho_s(1-\phi^\alpha_s)\rangle \d s \to 0.$$ Taking the limit $\alpha \to 0$ on the left hand side then gives that $\int \mathds{1}_{\{\phi_0=0\}}\rho^2_0 \d x = 0$, since $\phi_t$ is strictly positive. Therefore, $\mathds{1}_{\{\phi_0=0\}} \rho_0 = 0$.
\end{proof}

\begin{proof}[Proof of Example \ref{HeatEqWeight}]
Adaptedness and boundedness properties are immediate by the deterministic properties of the heat equation. Also, by analogous methods as in the proof of Lemma \ref{arbitraryphibound}, we find that $$\int_0^t \norm{\frac{\nabla h_s}{\sqrt{h_s}}}^2_{L^2} \d s < \infty.$$ Using the product rule, we now derive that $$\begin{aligned}
&\int h_t \log(\phi_t+\epsilon) \d x -\int  \phi_0\log(\phi_0+\epsilon)\d x \\ &= \int_0^t- 2\int \frac{\nabla h_s \nabla \phi_s}{\phi_s+\epsilon} \d x +\int h_s \frac{|\nabla \phi_s|^2}{\phi^2_s} \d x + \int  h_s \frac{\Psi(\phi_s, \bm c_s, \nabla \phi_s)}{(\phi_s+\epsilon)} \d x + \int h_s \frac{g(\phi_s, \bm c_s)}{(\phi_s+\epsilon)} \d x \d s  
\end{aligned}
$$
Again, we can rearrange and apply Young's inequality to control the third and fourth term on the right hand side. Combined with the estimate $$\int \frac{\nabla h_s \nabla \phi_s}{\phi_s+\epsilon} \d x \leq \const \int \frac{|\nabla h_s|^2}{h_s} \d x + \delta \int h_s \frac{|\nabla \phi_s|^2}{(\phi_s+\epsilon)^2} \d x$$ for $\delta \ll 1$, we can rearrange the product identity and take a limit $\epsilon \to 0$ to conclude existence of a deterministic constant with  
\begin{equation*}
\int_0^t  \int h_s \frac{|\nabla \phi_s|^2}{\phi_s^2} \d x  \d s < \infty. \qedhere
\end{equation*}
\end{proof}

\section*{Acknowledgement}
AS is supported by Deutsche Forschungsgemeinschaft (DFG, German Research
Foundation) under Germany's Excellence Strategy - The Berlin Mathematics
Research Center MATH+ (EXC-2046/1, project ID: 390685689). WS acknowledges support 
for DFG CRC/TRR 388 “Rough Analysis, Stochastic Dynamics and Related Topics”, Projects A10, B09.

\bibliography{maths}
\end{document}